\def\seq#1#2#3{#1_{#2},\,\ldots,#1_{#3}}
\def\useq#1#2#3{#1^{#2},\,\ldots,#1^{#3}}
\def\val#1#2#3{(#1_1(#3),\,\ldots,#1_#2(#3))}
\def\spec{\text{\tt Spec\,}}
\def\init{\text{\tt in\,}}
\def\union{\textstyle\bigcup\limits}
\def\tq{\;|\;}
\def\ideal#1{\text{\tt #1}}
\def\im{\text{\tt Im\,}}
\def\suma{\textstyle\sum\limits}
\def\pr{\textstyle\prod\limits}
\def\vv{{\underline{v}}}
\def\nuv{{\underline{\nu}}}
\def\t{{\underline{t}}}
\def\mm{{\underline{m}}}
\def\nn{{\underline{n}}}
\def\kk{{\underline{k}}}
\def\ee{{\underline{e}}}
\def\k{\overline}
\def\w{\widetilde}
\def\betab{\bar\beta}
\def\PP{\mathbb P}
\def\ZZ{\mathbb Z}
\def\C{\mathbb C}
\def\1{{\underline{1}}}
\def\gr{{\rm gr}}
\newtheorem{theorem}{Theorem}
\newtheorem{corollary}{Corollary}
\newtheorem{lemma}{Lemma}
\newtheorem{proposition}{Proposition}
\newenvironment{remark}
{\smallskip\noindent{\bf Remark\/}.}{\smallskip\par}
\newenvironment{proof}
{\noindent{\bf Proof\/}.}{{ $\Box$}\smallskip\par}
\newenvironment{Proof}
{\noindent{\bf Proof\/}.}{{}\smallskip\par}
\title{Generating sequences and Poincar\'e series
for a finite set of plane divisorial valuations
\footnote{\noindent Math. Subject Class. 14B05, 16W50, 16W70,
13A18
\newline
Supported by  the Spain Ministry of Education
MTM2004-00958 and JCyL VA025A07. Second author also supported
by
Bancaixa P1-1A2005-08.\newline The authors would like to thank the referee for his/her  detailed and 
helpful comments.\newline {\bf Addresses:} \newline F. Delgado \& A.
N\'u\~nez: Dto. de \'Algebra, Geometr\'{\i}a y Topolog\'{\i}a.
Universidad de Valladolid. 47005 Valladolid. Spain. e-mail:
fdelgado@agt.uva.es, anunez@agt.uva.es
\newline
C. Galindo: Dto. de  Matem\'{a}ticas (ESTCE), UJI, Campus Riu Sec.
12071  Castell\'{o}n. Spain. e-mail: galindo\symbol{'100}mat.uji.es
} }
\author{F. Delgado
\and C. Galindo \and A. N\'u\~nez }
\date{}
\begin{document}
\maketitle
\begin{abstract}
Let $V$ be a finite set of divisorial valuations centered at a
2-dimensional regular local ring $R$. In this paper we study its
structure by means of the semigroup of values, $S_V$, and the
multi-index graded algebra defined by $V$, $\gr_V R$. We prove
that $S_V$ is finitely generated and we compute its minimal set of
generators following the study of reduced curve singularities.
Moreover, we prove a unique decomposition theorem for the elements
of the semigroup.
 The comparison between valuations in $V$, the
approximation of a reduced plane curve singularity $C$ by families
of sets $V^{(k)}$ of divisorial valuations, and the relationship
between the value semigroup of $C$ and the semigroups of the sets
$V^{(k)}$, allow us to obtain the (finite) minimal generating
sequences for $C$ as well as for $V$.

We also analyze the structure of the homogeneous components of $\gr_V R$. The study of
their dimensions allows us to relate the Poincar\'e series for $V$ and for a general
curve $C$ of $V$. Since the last series coincides with the Alexander polynomial of the
singularity, we can deduce a formula of A'Campo type for the Poincar\'e series of $V$.
Moreover, the Poincar\'e series of $C$ could be seen as the limit of the series of
$V^{(k)}$, $k\ge 0$.
\end{abstract}

\section*{Introduction}

This paper deals with the structure of a finite number of divisorial
valuations centered at a regular local ring of dimension two. In
singularity theory there are many problems that involve finitely
many interrelated exceptional divisors (and so, their corresponding
divisorial valuations), which cannot be analyzed independently
without losing some information. Classification of sandwiched
singularities, minimal resolutions and the Nash problem are examples
of this situation. The study of plane curve singularities
constitutes a similar situation and the treatment of a branch is
rather different of the one of the whole curve (see \cite{g-d-c} and
\cite{c-d-g}). Problems as uniformization and monomialization of
valuations, studied historically by Zariski and Abhyankar, are also
object of recent activity (see e.g. \cite{t}),  providing another
motivation to our study.

This paper is inspired in two sources. Firstly,  the results for the
case of a single divisorial valuation by Spivakovsky \cite{s-2},
 where minimal generating sequences were computed (see also
\cite{gre-kiy}), and Galindo, who computes the Poincar\'e series
\cite{g-2}. The second one is the set of results \cite{lon, g-d-c,
c-d-g} obtained by Campillo, Delgado and Gusein-Zade for a
plane curve singularity with several branches where generation of
the semigroup, zeta function and Poincar\'e polynomial are
considered.
\medskip

Throughout this paper, we will assume that $(R,\ideal{m})$ is a
local, regular, complete and $2$-dimensional ring and that it has
an algebraically closed coefficient field. Replacing curves defined by elements in $R$ by analytically reduced curves
defined by elements in the completion $\hat R$ of $R$, and considering the valuations defined by their branches in the
ring $R$ (see e.g. \cite{zeit}),  theorems stated in the paper remain true without the assumption of
completeness. However, we will consider the complete case because it simplifies the
proofs and gives a more clear intuition.

\smallskip

Each irreducible component $E_\alpha$ of the exceptional divisor $E$ of a modification
$\pi$ of $\spec R$ defines a valuation of the fraction field of $R$ centered at $R$,
named divisorial and denoted by $\nu_{\alpha}$. An irreducible element in $R$ such that
the strict transform by $\pi$ of the corresponding curve is smooth and intersect
transversely $E_{\alpha}$ at a smooth point of $E$ is, generically, denoted by $Q_\alpha$
and plays an important role in this paper. Consider a finite set $V=\{\seq \nu 1r\}$ of
$r \geq 1$ divisorial valuations associated to exceptional components $E_{\alpha(i)}$ of
$E$ where $\pi : (X,E)\to (\spec R, \ideal{m})$ is the minimal modification such that
$E_{\alpha(i)} \subset E$ for $1 \le i \le r$. We define the semigroup of values of $V$
as the subsemigroup of $\ZZ_{\ge 0}^r$ given by  $S_V := \{\nuv(f):=(\nu_1(f), \ldots,
\nu_r(f))| f \in R\setminus\{0\}\}$. A general curve of $V$ is a reduced plane curve with
$r$ branches each one defined by an equation $Q_{\alpha(i)} = 0$. When $r=1$, $S_V$
coincides with the semigroup of values of any  general curve of $V$ \cite{tei}. The
valuation ideals $J(\mm) = \{g\in R \, | \, \nuv(g)\ge \mm\}$ define a multi-index
filtration of the ring $R$ which gives rise to a graded algebra $\gr_V R =
\bigoplus\limits_{\mm\in \ZZ_{\ge 0}^r} J(\mm)/J(\mm+\ee)$, $\ee = (1,\ldots,1)$. This
paper analyzes both objects, the semigroup and the graded algebra, for a set valuations
$V$ looking for its essential arithmetical and algebraic properties.

\smallskip

A description of the semigroup $S_V$  is given in Section 2. In
Theorem~\ref{semi}, we give the minimal set of generators of
$S_V$, proving that it is finitely generated, unlike the case of a
reduced plane curve singularity (see \cite{man} and \cite{lon}).
The set $\{B^i:= \nuv(Q_{\alpha(i)}) \mid i=1,\ldots,r\}$ plays a
very special role in $S_V$. Proposition~\ref{cor10} shows that
the projectivization of the vector space $D(B^i) =
J(B^i)/J(B^i+\ee)$ is canonically isomorphic to the exceptional
divisor $E_{\alpha(i)}$ which defines the valuation $\nu_i$. In
particular, $D(B^i)$ is bidimensional. The study of the dimension
$d_i(\mm)$ of the spaces $D_i(\mm)=J(\mm)/J(\mm+e_i)$ allows to
prove Theorem~\ref{str_semi}, which gives a unique decomposition
for the elements in $S_V$ in terms of the set $\{\useq{B}1r\}$. In
particular, we give another proof of the fact that if  $V$ consists of all the divisors of a modification,
then $S_V$ is a free semigroup generated by $\useq{B}1r$.

\smallskip

In Section 3 we describe a generating sequence for a finite set
$V$ of divisorial valuations  and for a reduced curve with several
branches. Denote by ${\cal E}$ the set of end divisors of the
minimal resolution of $V$, i.e., the exceptional components
$E_\alpha$ such that $E\setminus E_\alpha$ is connected, and set
$\Lambda_{\cal E} = \{Q_{\rho}\tq E_\rho\in {\cal E}\}$. The main
result of this section, Theorem~\ref{maint}, states that
$\Lambda_{\cal E}$ is a minimal generating sequence of $V$, that
is, any valuation ideal is generated by monomials in the set
$\Lambda_{\cal E}$. After a result of Campillo and Galindo
\cite{c-g}, this is equivalent to the fact that $\gr_V R$ is the
$R/\ideal{m}$-algebra generated by the classes of the elements in
$\Lambda_{\cal E}$.

A similar result is true for the set $W$ of valuations defined by the branches of a
reduced plane curve $C$, however we must change the set $\Lambda_{\cal E}$ by another one
$\Lambda_{\k{\cal E}}$ which is also finite (see, again, Theorem~\ref{maint}). The key to
understand it, is that $W$ can be regarded as a limit of families of divisorial
valuations $V^{(k)}$, $k \geq 0$, and so  $\Lambda_{\k{\cal E}}$ as the limit
  of the sequence $\Lambda_{{\cal
E}^{(k)}}$ given by $V^{(k)}$. The number of classes in $\gr_V R$
produced by each element in $\Lambda_{\cal E}$ is finite, however
some elements in  $\Lambda_{\k{\cal E}}$  give infinitely many
different classes
in the corresponding algebra (see the last remark of Section
\ref{gradedalg}). This fact explains the apparent contradiction
between the infinite generation of the semigroup of a plane curve
singularity and the existence of a finite generating sequence.

It is worthwhile to mention that the so called multipliers ideals of ideals in the ring
$R$, can be regarded as ideals $J(\mm)$ for concrete sets, $V$, and  elements $\mm$
\cite{laz}. Notice that, in our case, these ideals are exactly the complete ones
\cite{lip-wat}.

\smallskip

The dimensions $d(\mm) = \dim J(\mm)/J(\mm+\ee)$ of the homogeneous pieces of the graded
ring $\gr_V R$ can be collected in the Laurent series $L_V(\seq t1r) = \suma_{\mm\in
\ZZ^r} d(\mm) \t^{\mm}$
(note that the sum extends to $\ZZ^r$). Following \cite{c-d-g} and \cite{d-g}, the
Poincar\'e series of $V$ is defined as the formal series with integral coefficients
$$
P_V(\seq t1r) = \frac{L_V(\seq t1r)\cdot\pr_{i=1}^r(t_i-1)}{t_1 t_2
\cdots t_r -1} \; .
$$
As an application of the results and  techniques developed in the previous sections,
Section 4 is devoted to the computation of the Poincar\'e series $P_V$. So, in
Theorem~\ref{poincare1} we state the relation between the Poincar\'e series of $V$ and
the Poincar\'e polynomial $P_C$ of any general curve $C$ of $V$. This polynomial
coincides with the Alexander polynomial of the link of the singularity \cite{c-d-g}.
In the complex case, the above expression leads to an explicit formula for $P_V$ in terms
of the topology of the exceptional divisor, very similar to the formula of A'Campo (see
\cite{acam}) for the zeta function (extended by Eisenbud and Neumann in \cite{EN} for the
Alexander polynomial):
$$
P_V(\seq t1r) = \pr\limits_{E_\alpha\subset{E}}
\left(1-\t^{{\nuv}^\alpha}\right)^{-\chi({\stackrel{\bullet}{E}}_\alpha)}
$$
where
$\chi({\stackrel{\bullet}{E}}_\alpha)$ is the Euler
characteristic  of the smooth part
${\stackrel{\bullet}{E}}_\alpha$ of $E_\alpha\subset E$.
This  formula was
conjectured by the authors some time ago, but the first complete
proof has been given in \cite{d-g}
by using a very different approach: the integration on
infinite dimensional spaces with respect to the Euler
characteristic.

\smallskip

To prove our results, we develop two different kind of techniques
which in our opinion have interest by themselves. The main steps
of the first one are included in Section~1. There, we consider
pairs of elements in $R$ with the same value by a valuation
$\nu_\alpha$ associated to a component $E_\alpha$ of the
exceptional divisor $E$ of a modification and we find the relation
between the initial forms of these elements with respect to
$\nu_\alpha$ as well as their values for the valuations
corresponding to other components of $E$. Such study is given in
terms of the topology of the exceptional divisor. In the proofs we
systematically use the geometry of pencils of plane curves. In
particular, the fact that the divisor $E_\alpha$ be dicritical for
the pencil $\{\lambda f + \mu g\}$ if and only if the initial
forms of the functions $f$ and $g$ with respect to $\nu_\alpha$
are linearly independent, explains the deep relationship between
both concepts.

The second technique, specially used in Section 4, is the cited
approximation of curves by divisorial valuations. We can see it as
a way to go from  results related to a curve $C$ to  similar
results for the corresponding sets $V^{(k)}$ of divisorial
valuations, and viceversa. Corollary~\ref{cor}, which presents the
Poincar\'e polynomial of a general curve $C$ of $V$ as the limit
of the Poincar\'e series of sets of divisorial valuations
$V^{(k)}$, gives a good example of this philosophy.

\section{Divisorial valuations}\label{prelim}

Let $(R,\ideal{m})$ be a local, regular, complete and $2$-dimensional ring with an
algebraically closed coefficient field $K$. For us, a curve will be a subscheme of
$\spec R$, $C_f$, defined by some element $f \in \ideal{m}$.  A {\bf divisorial
valuation} $\nu$ is a discrete valuation of the fraction field of $R$, centered at $R$
(i.e., $R\cap \ideal{m}_\nu = \ideal{m}$, where $(R_{\nu},\ideal{m}_\nu)$ is the
valuation ring of $\nu$), with rank 1 and transcendence degree $1$.

\medskip

Given a \textbf{modification}, that is, a finite sequence of point
blowing-ups, $\pi:X\to \spec R$, there is a divisorial valuation,
$\nu_{\alpha}$, associated to each irreducible component
$E_{\alpha}$ of the total exceptional divisor $E$ of $\pi$, namely,
for $f\in R$, $\nu_\alpha(f)$ is the vanishing order of the function
$f\circ \pi:X\to K$ along the divisor $E_\alpha$. We will say that
$\nu_{\alpha}$ is the $E_{\alpha}$-valuation.

Assume that $\pi$ is given by the sequence
\[
\pi:   X = X_{N+1}    \stackrel{\pi_{N+1}}{\longrightarrow} X_{N}
\longrightarrow           \cdots            \longrightarrow X_{1}
\stackrel{\pi_{1}}{\longrightarrow} X_{0}=\spec R,
\]
and denote, for $0\le i\le N$, by $P_i$ the center of $\pi_{i+1}$ in $X_i$ ($P_0=\ideal
m$), by $(R_i, \ideal m_i)$ the local ring of $X_i$ at $P_i$, and by $E_{i+1}$ the
exceptional divisor of $\pi_i$.  Then, for $1\le \alpha\le N+1$, $\nu_{\alpha}$ is the
$\ideal m_{\alpha-1}$-adic valuation. Given $\nu=\nu_{\alpha}$, we will sometimes denote
$P_{\nu}$ and $E_{\alpha(\nu)}$ instead of $P_{\alpha}$ and $E_{\alpha}$.

In fact, divisorial valuations correspond $1-1$ to finite sequences of point blowing-ups,
by associating to $\nu$ its {\bf minimal resolution}, defined as follows:  $\pi_{i+1}$ is
the blowing-up of $X_i$ at $P_i$, $P_0=\ideal m$, and for $i\ge 1$ $P_i$ is the unique
point in the exceptional divisor of $\pi_{i}$, $E_i$, such that $R_{\nu}$ dominates the
local ring of $X_i$ at $P_i$. In this way, $\nu$ is the divisorial valuation associated
to $E_{N+1}$.

Given the $E_{\nu}$ divisorial valuation $\nu$, denote by ${\cal
C}_{\nu}$ the set of all irreducible curves in $\spec R$ whose
strict transform by the minimal resolution $\pi$ of $\nu$ is
smooth and meets $E_{\nu}$ transversely at a nonsingular point of
the total exceptional divisor of $\pi$. An element $f \in
\ideal{m}$ is said to be a {\bf general element} of $\nu$ if
$C_{f} \in {\cal C}_{\nu}$. In \cite{s-2} it is proved that for $f
\in R$,
\begin{equation}\label{eq00}
\begin{array}{rl}
\nu(f) &= \min \{ (f,g)\; |\; g \in {\cal C}_{\nu} \}\, \\
    &= (f,g)\; \text{ if }\; \widetilde{C}_f \cap \widetilde{C}_g
= \emptyset \; \text{ and } g\in {\cal C}_\nu\; ,
\end{array}
\end{equation}
where $(f,g)$ stands for the intersection multiplicity $(C_f,C_g)$
between the curves $C_{f}$ and $C_{g}$ and $\widetilde{C}_f,
\widetilde{C}_g$ for the strict transforms by $\pi$ of the curves
$C_f$ and $C_g$. The minimal resolution $\pi: X\to \spec R$ of the
divisorial valuation $\nu$ is an embedded resolution of $C_{f}$
for $f\in {\cal C}_{\nu}$, in general not the minimal one.

Conversely, let $C_f$ be
any irreducible curve in $\spec R$, and take the associated
(infinity) sequence of blowing-ups with centers at the infinitely
near points of $f$,
\begin{equation}\label{seq}
\cdots\longrightarrow      X_{i+1}
\stackrel{\pi_{i+1}}{\longrightarrow} X_{i} \longrightarrow \cdots
\longrightarrow X_{1} \stackrel{\pi_{1}}{\longrightarrow}
X_{0}=\spec R.
\end{equation}
For each $i\ge 0$ set $\nu_i$ the $E_{i+1}$ divisorial valuation.
Since the curve $C_f$ is determined by the sequence (\ref{seq}), we can
think of the sequence of valuations $\{\nu_i\}_{i\ge 0}$ as an
approaching of $C_f$. Indeed, for any $g\in R$, nonzero
in the ring $R/(f)$,
$\nu_i(g)=(f,g)$ for $i\gg 0$. So, the study of divisorial
valuations and of irreducible curves is closely related (see for
example \cite{s-2}).

\medskip

Let $\pi: (X,E)\to (\spec R, \ideal{m})$ be a modification. The
{\bf dual graph} ${\cal G}(\pi)$ of $\pi$ is the dual figure of
the exceptional divisor $E$; that is, it is a graph with a vertex
$\alpha$ for each irreducible component $E_{\alpha}$ of $E$ and
where two vertices are adjacent if and only if their corresponding
exceptional divisors intersect.

The graph ${\mathcal G}(\pi)$ is a tree. We will denote by ${\bf 1}$
the vertex corresponding to the first exceptional divisor and by
$[\beta, \alpha]$ the path joining $\beta$ and $\alpha$. Along this
paper, for a vertex $\alpha$ in ${\mathcal G}(\pi)$, $Q_{\alpha}$
will stand for any irreducible  element of $\ideal m$ such that the
strict transform of the curve $C_{Q_{\alpha}}$ on $X$ is smooth and
meets $E_{\alpha}$ transversely at a nonsingular point.
$C_{Q_{\alpha}}$ gives in particular a general element of the
$E_{\alpha}$-valuation.

A {\bf dead end} (respectively, {\bf  star vertex}) of the graph
${\cal G}(\pi)$ is a vertex which is adjacent to a unique
(respectively, to at least three) vertices.  The
set of dead ends will be
denoted by ${\cal E}$. Given a dead end $\rho\neq {\bf 1}$,
$st_\rho$ will denote the nearest to $\rho$
star vertex of ${\cal G}(\pi)$.

\medskip
In this paper we will make use of the concept of
\textbf{pencil} of elements in $R$.
Recall that if we consider the pencil $L=\{\lambda f+\mu g\tq
\lambda, \mu\in K\}$,
relative to two elements $f,g\in R$, a component $E_{\alpha}$ of the exceptional divisor
$E$ of a modification $\pi$ is said to be {\bf dicritical} for $L$ if the
$E_{\alpha}$-valuation, $\nu_{\alpha}$, is constant on $L$. This
condition is equivalent to say that the lifting
$\w{\varphi}=\varphi\circ\pi$ of the rational function $\varphi=f/g$ to $X$ restricts to
a surjective (that is, non constant) morphism from $E_{\alpha}$ onto $\mathbb P^1_K$. In
the sequel, we will identify $\PP^1_K$ with $K \cup \{ \infty \}$. The fibers of $L$ are
studied in \cite{pencils} in the analytic complex case, and we will use those results
because they can be easily extended to our context.

In particular from Theorems 1, 2 and 3 in \cite{pencils} we can
deduce the following:

Let $\pi:(X,E)\to \spec R$ be a modification and $\alpha$ a vertex
of ${\cal G}(\pi)$. For a subset $A$ of $\mathcal G(\pi)$ denote
$E_A=\union_{\beta\in A}E_{\beta}$. Assume that $\w{\varphi}$ is
constant in $E_{\alpha}$, $\w{\varphi}|_{E_\alpha}\equiv c\in
\PP^1$. Then the strict transform $\w{C}_{f-cg}$ of $C_{f-cg}$
intersects $E_A$, $A$ being the maximal connected subset of ${\cal
G}(\pi)$ such that $\alpha\in A$, and $\w{\varphi} \equiv c$ along
$E_A$.

On the other hand, assume that
$E_\alpha$ is dicritical and $P\in E_\alpha$ is
such that $\w{\varphi}(P)=c$.
If $P$ is a smooth point of
$E$  then $\w{C}_{f-cg}$
intersects
$E_\alpha$ in $P$, and
if $P$ is singular and $\Delta$ is the
connected component of ${\cal G}(\pi)\setminus \{\alpha\}$ such
that
$E_\Delta\cap E_\alpha =\{P\}$ then
$\w{C}_{f-cg}$ intersects $E_\Delta$.

\bigskip

Next results are stated for a modification $\pi:(X,E)\to \spec R$ and a vertex
$\alpha\in{\cal G}(\pi)$.

\begin{lemma}\label{lem12}
Let $h\in R$ be such that $\nu_{\alpha}(h)=\nu_{\alpha}(Q_\alpha)$
and assume that $\w{C}_h\cap\w{C}_{Q_{\alpha}}=\emptyset$. Then
$E_{\alpha}$ is the unique dicritical divisor of the pencil
$L=\{\lambda Q_{\alpha}+\mu h\tq \lambda,\mu\in K\}$, and the
lifting $\w{\varphi}$ of the rational function
$\varphi=Q_{\alpha}/h$ to $X$ restricts to an isomorphism in
$E_{\alpha}$.
\end{lemma}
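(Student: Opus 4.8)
The plan is to establish, in turn, three things: that $E_\alpha$ is dicritical for $L$, that $\w\varphi|_{E_\alpha}$ is an isomorphism onto $\PP^1$, and that $E_\alpha$ is the only dicritical divisor. Since $\nu_\alpha(h)=\nu_\alpha(Q_\alpha)=:v$, by the criterion recalled above it suffices for dicriticalness to show that $\init_{\nu_\alpha}Q_\alpha$ and $\init_{\nu_\alpha}h$ are linearly independent in $\gr_{\nu_\alpha}R$. I would argue this by contradiction using the identity
$$
(f,Q_\alpha)=\nu_\alpha(f)+\big(\w C_f\cdot\w C_{Q_\alpha}\big)_X,\qquad f\in R\setminus\{0\},
$$
which is a precise form of (\ref{eq00}): it follows from $(f,Q_\alpha)=(\pi^*C_f\cdot\pi^*C_{Q_\alpha})_X$ by writing $\pi^*C_f=\w C_f+\suma_\gamma\nu_\gamma(f)E_\gamma$ (and likewise for $Q_\alpha$), expanding, and using $\pi^*C_f\cdot E_\gamma=0$ together with $\w C_{Q_\alpha}\cdot E_\gamma=\delta_{\alpha\gamma}$, the defining property of the curvette $Q_\alpha$. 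In particular $(h,Q_\alpha)=v$ because $\w C_h\cap\w C_{Q_\alpha}=\emptyset$. Now if $\init_{\nu_\alpha}h=c\,\init_{\nu_\alpha}Q_\alpha$ for some $c\in K\setminus\{0\}$, then $\nu_\alpha(h-cQ_\alpha)>v$, whence $(h-cQ_\alpha,Q_\alpha)\ge\nu_\alpha(h-cQ_\alpha)>v$; but $(h-cQ_\alpha,Q_\alpha)=(h,Q_\alpha)=v$, since $Q_\alpha$ is irreducible and $h\notin(Q_\alpha)$ (otherwise $\w C_{Q_\alpha}\subseteq\w C_h$). This contradiction proves $E_\alpha$ is dicritical.

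For the isomorphism, let $p$ be the single point of $\w C_{Q_\alpha}\cap E$; it lies on $E_\alpha$ only, is a smooth point of $E$, and the intersection there is transverse. I would pick local coordinates $(u,w)$ at $p$ with $E_\alpha=\{w=0\}$ and $\w C_{Q_\alpha}=\{u=0\}$ locally; then $Q_\alpha\circ\pi=u\,w^{v}\cdot(\text{unit})$ and, since no other component of $E$ meets $p$ and $\w C_h$ avoids $p$ (being disjoint from $\w C_{Q_\alpha}$), also $h\circ\pi=w^{v}\cdot(\text{unit})$. Hence $\w\varphi=u\cdot(\text{unit})$ near $p$, so $\w\varphi|_{E_\alpha}$ has a simple zero at $p$. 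Moreover $p$ is the only preimage of $0$ in $E_\alpha$: by the results quoted from \cite{pencils}, a second preimage smooth on $E$ would force $\w C_{Q_\alpha}$ to meet $E$ twice, while one lying on some $E_\alpha\cap E_\Delta$ would force $\w C_{Q_\alpha}$ to meet $E_\Delta$ (with $\alpha\notin\Delta$), both impossible. Therefore $\deg(\w\varphi|_{E_\alpha})=1$ and $\w\varphi|_{E_\alpha}\colon E_\alpha\cong\PP^1\to\PP^1$ is an isomorphism.

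For uniqueness, suppose (after enlarging $\pi$ so that it resolves $L$, if needed) that there is a dicritical divisor $E_\beta\ne E_\alpha$, chosen with distance from $\alpha$ in ${\cal G}(\pi)$ minimal. Surjectivity of $\w\varphi|_{E_\beta}$ yields $q\in E_\beta$ with $\w\varphi(q)=0$. If $q$ is a smooth point of $E$, the quoted results make $\w C_{Q_\alpha}$ pass through $q\in E_\beta$, impossible since $\w C_{Q_\alpha}\cap E=\{p\}\subset E_\alpha$. If $q$ is singular, $q\in E_\beta\cap E_\gamma$, then $\w C_{Q_\alpha}$ meets $E_\Delta$ for the component $\Delta$ of ${\cal G}(\pi)\setminus\{\beta\}$ containing $\gamma$; as $\w C_{Q_\alpha}\cap E=\{p\}\subset E_\alpha$ this forces $\alpha\in\Delta$, so $\gamma$ lies on $[\beta,\alpha]$ and is strictly nearer $\alpha$ than $\beta$. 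By minimality $E_\gamma$ is not dicritical, hence $\w\varphi|_{E_\gamma}\equiv 0$; then the other quoted result propagates $\w\varphi\equiv 0$ over the maximal connected set of divisors through $\gamma$ on which $\w\varphi\equiv 0$, a set that must contain $E_\alpha$ (again because $\w C_{Q_\alpha}$ meets $E$ only on $E_\alpha$), contradicting dicriticalness of $E_\alpha$. Finally $\gamma=\alpha$ is excluded, since the unique preimage of $0$ on $E_\alpha$ is the smooth point $p$, not the singular point $q$. Hence $E_\alpha$ is the only dicritical divisor of $L$.

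I expect the uniqueness step to be the main obstacle: it is where the fiber geometry of pencils imported from \cite{pencils} genuinely enters, and it relies on carefully tracking, for each separating vertex of the dual tree, on which side the single point $\w C_{Q_\alpha}\cap E$ sits. The other two steps amount to a bilinearity computation of intersection numbers on $X$ and a local coordinate computation at $p$.
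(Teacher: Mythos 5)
Your proof is correct, but it routes each of the three conclusions differently from the paper, and it reorders them (dicriticalness, then isomorphism, then uniqueness, whereas the paper does dicriticalness, uniqueness, isomorphism). For dicriticalness the paper argues directly: since $\nu_\alpha(h)=\nu_\alpha(Q_\alpha)$ the restriction $\w\varphi_\alpha$ is $\not\equiv 0,\infty$, yet it vanishes at $\w C_{Q_\alpha}\cap E_\alpha$, hence is non-constant; you instead invoke the initial-form criterion for dicriticalness from the introduction and rule out linear dependence via the projection formula $(f,Q_\alpha)=\nu_\alpha(f)+(\w C_f\cdot\w C_{Q_\alpha})_X$. Both are valid; the paper's is shorter, yours makes the intersection-theoretic mechanism explicit. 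For the isomorphism, after the same observation that $p=\w C_{Q_\alpha}\cap E_\alpha$ is the unique zero of $\w\varphi_\alpha$, the paper cites Theorem 3 of \cite{pencils} to say $p$ is non-critical and hence $\deg\w\varphi_\alpha=1$, while you rederive this by a local-coordinate computation showing the zero at $p$ is simple --- more elementary, and self-contained. For uniqueness the paper is terse ("would contradict the irreducibility of the fiber $Q_\alpha$"); your argument, choosing a putative second dicritical $E_\beta$ at minimal distance from $\alpha$, tracking the zero $q\in E_\beta$, and splitting on whether $q$ is smooth or singular and propagating $\w\varphi\equiv 0$ toward $\alpha$, is a genuine unpacking of what that one-liner has to mean, and it is rigorous. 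A small stylistic note: the exclusion $\gamma\neq\alpha$ only uses that $p$ is the unique zero on $E_\alpha$ (established before the degree computation), so your dependence on the isomorphism step for uniqueness is lighter than it first appears. Also, as you half-suspect, no enlargement of $\pi$ is needed: the pencil's base locus on $X$ is $\w C_h\cap\w C_{Q_\alpha}=\emptyset$, so $\w\varphi$ is already a morphism, and the paper does not enlarge $\pi$ either.
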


\begin{proof}
Since $\nu_\alpha(Q_\alpha)=\nu_{\alpha}(h)$, $\w{\varphi}$ is defined in every point of
$E_{\alpha}$, and $\w{\varphi}_{\alpha}:=\w{\varphi}|_{E_{\alpha}}\not\equiv 0,\,
\infty$. Since moreover $\w{C}_h\cap\w{C}_{Q_{\alpha}}=\emptyset$,
$\w{\varphi}_{\alpha}(\w{C}_{Q_{\alpha}}\cap E_{\alpha})=0$, so $E_{\alpha}$ is a
dicritical component for $L$. In fact, $E_{\alpha}$ is the unique dicritical component
for $L$, because the existence of another one would contradict the irreducibility of the
fiber  $Q_{\alpha}$.

Let $P\in E_{\alpha}$ be such that $\w{\varphi}_{\alpha}(P)=0$. If
there were a connected component $\Delta$ of $\mathcal
G(\pi)\setminus\{\alpha\}$ such that $P\in E_{\Delta}$, then
$\w{C}_{Q_{\alpha}}$  would intersect $E_{\Delta}$,
which is impossible by the election of $Q_{\alpha}$.
Hence, $P$ is a smooth point of $E$, $P=\w{C}_{Q_{\alpha}}\cap
E_{\alpha}$. Moreover, from Theorem 3 of \cite{pencils}, $P$ is not
a critical point of $\w{\varphi}_\alpha$, so $\w{\varphi}_{\alpha}$
has degree 1, i.e., it is an isomorphism.
\end{proof}

The next result is a generalization of Lemma 4 in \cite{extended}.

\begin{proposition}\label{lem1}
Let $h\in R$ be such that $\nu_{\alpha}(h)=\nu_{\alpha}(Q_\alpha)$
and such that the strict transform $\w{C}_h$ of $C_h$ on $X$ does
not intersect $E_{\alpha}$. Then there exists a unique connected
component $\Delta$ of  ${\cal G}(\pi)\setminus \{\alpha\}$ such that
$\w{C}_h\cap E_{\Delta}\neq\emptyset$. Moreover,
$\nu_\gamma(h)=\nu_\gamma(Q_\alpha)$ if $\gamma \in {\cal
G}(\pi)\setminus \Delta$, and $\nu_\gamma(h)>\nu_\gamma(Q_\alpha)$
otherwise.
\end{proposition}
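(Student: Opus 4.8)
The plan is to compare the pencil $L=\{\lambda Q_\alpha+\mu h\}$ with the rational function $\varphi=Q_\alpha/h$ and exploit the dicritical structure, using the results on pencils quoted after the definition of dicritical divisor. First I would observe that, since $\nu_\alpha(h)=\nu_\alpha(Q_\alpha)$ and $\w C_h\cap E_\alpha=\emptyset$, the lifting $\w\varphi$ is defined along $E_\alpha$ and $\w\varphi_\alpha:=\w\varphi|_{E_\alpha}$ is not identically $0$ or $\infty$; moreover $\w\varphi_\alpha(\w C_{Q_\alpha}\cap E_\alpha)=0$, so $E_\alpha$ is dicritical for $L$. (Note we are in the situation where $\w C_h\cap\w C_{Q_\alpha}=\emptyset$ automatically, since they meet $E$ in different components or not at all — actually this needs a small argument, see below.) The value $\infty$, i.e. the strict transform $\w C_h$, is then attained at some point $P\in E_\alpha$ with $\w\varphi_\alpha(P)=\infty$. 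Since $\w C_h\cap E_\alpha=\emptyset$ by hypothesis, $P$ cannot be a smooth point of $E$ lying on $\w C_h$; hence by the quoted consequence of Theorems 1--3 of \cite{pencils}, $P$ is a singular point of $E$ and there is a connected component $\Delta$ of ${\cal G}(\pi)\setminus\{\alpha\}$ with $E_\Delta\cap E_\alpha=\{P\}$ such that $\w C_h$ meets $E_\Delta$. Uniqueness of $\Delta$ follows because the fiber over $\infty$ of a pencil with a single dicritical component $E_\alpha$ is connected (the pencil $L$ has $E_\alpha$ as its unique dicritical divisor by the argument in Lemma~\ref{lem12}, the irreducibility of $Q_\alpha$ forbidding a second one, and $C_h$, being one fiber, cannot be split among two components of ${\cal G}(\pi)\setminus\{\alpha\}$).

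Next I would compute the values $\nu_\gamma(h)$ for $\gamma\neq\alpha$. The key identity is that for $\gamma$ on the same side as $\alpha$ (i.e. $\gamma\notin\Delta$), the divisor $E_\gamma$ "sees'' $h$ and $Q_\alpha$ in the same way: along the path from $\1$ to $\gamma$ one never enters $\Delta$, and in the local rings at the infinitely near points of $Q_\alpha$ up to $E_\gamma$ the curves $C_h$ and $C_{Q_\alpha}$ share the same infinitely near points. Concretely, I would argue by the intersection-multiplicity description \eqref{eq00}: pick a general element $g$ of $\nu_\gamma$ whose strict transform is disjoint from both $\w C_h$ and $\w C_{Q_\alpha}$; then $\nu_\gamma(h)=(h,g)$ and $\nu_\gamma(Q_\alpha)=(Q_\alpha,g)$, and both intersection numbers are computed as a sum of products of multiplicities at the common infinitely near points of $g$, which — because $g$ is centered far from $\Delta$ — coincide for $h$ and $Q_\alpha$. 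Equivalently, one can phrase this as: $h$ and $Q_\alpha$ have the same strict transform behavior on the subtree ${\cal G}(\pi)\setminus\Delta$, so $\nu_\gamma(h)=\nu_\gamma(Q_\alpha)$ there.

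For $\gamma\in\Delta$ I must show the strict inequality $\nu_\gamma(h)>\nu_\gamma(Q_\alpha)$. Here the point is that $\w C_{Q_\alpha}$ does not meet $E_\Delta$ at all (by the choice of $Q_\alpha$, its strict transform meets $E$ only at one smooth point of $E_\alpha$), whereas $\w C_h$ does meet $E_\Delta$. Taking again a general element $g$ of $\nu_\gamma$ with $\w C_g$ through the appropriate smooth point of $E_\gamma$ and applying \eqref{eq00}, $\nu_\gamma(Q_\alpha)=(Q_\alpha,g)$ is the "minimal possible'' value, realized because $\w C_{Q_\alpha}$ and $\w C_g$ are disjoint; but $\nu_\gamma(h)\ge(h,g)$ and, since $\w C_h$ genuinely propagates into $E_\Delta$ and beyond the point $P$, the curve $C_h$ has extra infinitely near points in common with the cluster defining $\nu_\gamma$, forcing $(h,g)>(Q_\alpha,g)$. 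Making this last comparison precise — identifying exactly which infinitely near multiplicities of $h$ strictly exceed those of $Q_\alpha$ once we pass $P$ into $\Delta$ — is the main obstacle; I would handle it by induction on the distance of $\gamma$ from $\alpha$ within $\Delta$, the base case $\gamma$ adjacent to $\alpha$ following from $\w\varphi_\alpha(P)=\infty$ together with the local picture at the blow-up of $P$, and the inductive step being a routine propagation of the strict inequality along the blow-up sequence.
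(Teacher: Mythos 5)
Your first paragraph is essentially the paper's argument: Lemma~\ref{lem12} gives that $E_\alpha$ is the unique dicritical divisor and $\w{\varphi}_\alpha$ is an isomorphism, and uniqueness of $\Delta$ follows because $\w{\varphi}_\alpha^{-1}(\infty)$ is a single point of $E_\alpha$. Up to here you are fine.

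The gap is in your second and third paragraphs, where you abandon the rational function $\w{\varphi}$ and switch to an intersection-multiplicity (infinitely near points) computation. For $\gamma\notin\Delta$, the assertion that ``$h$ and $Q_\alpha$ share the same infinitely near points'' along $\mathcal{G}(\pi)\setminus\Delta$, or that their multiplicities there coincide, is not a hypothesis --- it is essentially equivalent to the equality $\nu_\gamma(h)=\nu_\gamma(Q_\alpha)$ you are trying to prove, and you give no independent argument for it. (The only data you actually have are $\nu_\alpha(h)=\nu_\alpha(Q_\alpha)$ and where $\w{C}_h$ meets $E$; nothing a priori forces the infinitely near multiplicities of $h$ and $Q_\alpha$ to match along the path.) For $\gamma\in\Delta$ you yourself flag the strict inequality as ``the main obstacle'' and only sketch an unexecuted induction. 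So both halves of the second statement are left unproved.

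The clean route, which the paper takes and which you already have all the ingredients for after your first paragraph, is to stay with $\w{\varphi}$: since $E_\alpha$ is the unique dicritical divisor of $L$, the restriction $\w{\varphi}|_{E_\gamma}$ is constant for every $\gamma\neq\alpha$. If $\gamma\in\Delta$, pick $P\in\w{C}_h\cap E_\Delta$; then $\w{\varphi}(P)=\infty$, and by the quoted consequence of \cite{pencils} this constant value propagates to $\w{\varphi}|_{E_\Delta}\equiv\infty$, which is exactly $\nu_\gamma(h)>\nu_\gamma(Q_\alpha)$ for all $\gamma\in\Delta$. For $\gamma\notin\Delta$, the constant value of $\w{\varphi}|_{E_\gamma}$ is not $\infty$ (otherwise the quoted result would force $\w{C}_h$ to meet $E_{\Delta'}$ for the connected component $\Delta'\ni\gamma$, contradicting uniqueness of $\Delta$) and not $0$ (otherwise $\w{C}_{Q_\alpha}$ would meet $E_{\Delta'}$, contradicting that $\w{C}_{Q_\alpha}$ meets $E$ only at a smooth point of $E_\alpha$); hence $\nu_\gamma(h)=\nu_\gamma(Q_\alpha)$. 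Replacing your intersection-theoretic sketch by this two-line application of the pencil results closes the gap.
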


\begin{proof}
Keep the notations of Lemma \ref{lem12}. Let $\Delta$ be a
connected component of $\mathcal G(\pi)\setminus\{\alpha\}$ such
that $\w{C}_h\cap E_\Delta \neq \emptyset$. By Lemma \ref{lem12},
there are not dicritical divisors of $L$ in $\Delta$, and since
$\w{\varphi}(P)=\infty$ for any $P\in \w{C}_h\cap E_\Delta$, then
$\w{\varphi}|_{E_{\Delta}}\equiv\infty$, which implies
$\nu_{\gamma}(h)>\nu_{\gamma}(Q_{\alpha})$ for $\gamma\in\Delta$.
As $\w{\varphi}_{\alpha}$ is an isomorphism, $E_\alpha\cap
E_\Delta$ is the unique point $Q\in E_\alpha$ such that
$\w{\varphi}(Q)=\infty$, hence $\Delta$ is the unique connected
component of ${\cal G}(\pi)\setminus \{\alpha\}$ such that
$\w{C}_h\cap E_\Delta \neq \emptyset$ and moreover we deduce that
$\nu_\gamma(Q_\alpha)=\nu_\gamma(h)$ if $\gamma\notin \Delta$.
\end{proof}

A close result holds when we change $Q_\alpha$ by whatever
element
$f \in R$:
\begin{proposition}\label{lem3}
 Let
$h$ and  $f$ be elements in $R$ such that
$\nu_{\alpha}(h)=\nu_{\alpha}(f)$ and assume that there exists a
connected component $\Delta$ of ${\cal G}(\pi)\setminus\{\alpha\}$
such that $E_{\Delta}$ contains $\w{C}_h\cap E$ and $\w{C}_f\cap
E$.
Then $ \nu_{\gamma}(f) = \nu_\gamma(h)$ for each $\gamma\notin
\Delta $ and there exists $c \in K$, $c \neq 0$, such that $
\nu_{\alpha}(f- c h)>\nu_{\alpha}(f)\; $.
\end{proposition}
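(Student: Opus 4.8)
The plan is to use the pencil $L=\{\lambda f+\mu h\mid\lambda,\mu\in K\}$ together with the characterization of dicritical divisors recalled before Lemma~\ref{lem12}. First I would consider the lifting $\w\varphi=\varphi\circ\pi$ of the rational function $\varphi=f/h$ to $X$. Since $\nu_\alpha(f)=\nu_\alpha(h)$, the rational function $\w\varphi$ is defined at the generic point of $E_\alpha$, and $\w\varphi_\alpha:=\w\varphi|_{E_\alpha}\not\equiv 0,\infty$, so $\w\varphi_\alpha$ takes some value $c\in K\setminus\{0\}$ at the generic point of $E_\alpha$; equivalently, either $E_\alpha$ is dicritical for $L$ (and $\w\varphi_\alpha$ is a nonconstant morphism onto $\PP^1$), or $\w\varphi_\alpha\equiv c$ is constant. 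In both cases there is a well-defined constant $c\in K$, $c\neq 0$, such that $\nu_\alpha(f-ch)>\nu_\alpha(f)$: if $\w\varphi_\alpha\equiv c$ this is immediate from the definition of the order of vanishing, and if $E_\alpha$ is dicritical one takes $c$ to be the value $\w\varphi_\alpha(E_\alpha\cap E_\Delta)$ at the (necessarily unique) point where the fiber over $\infty$ meets $E_\alpha$ — but I would rather rule this case out directly, as in the next step.

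Next I would show that $E_\alpha$ cannot be dicritical for $L$, which is where the hypothesis on $\Delta$ is used. By assumption both strict transforms $\w C_h\cap E$ and $\w C_f\cap E$ lie in $E_\Delta$ for a single connected component $\Delta$ of ${\cal G}(\pi)\setminus\{\alpha\}$. If $E_\alpha$ were dicritical, then by the results quoted from \cite{pencils} the fiber of $L$ over any value $\w\varphi_\alpha(P)$ at a smooth point $P\in E_\alpha$ would have strict transform meeting $E_\alpha$ at $P$, and the fibers over $0$ and over $\infty$ are (up to a unit) $C_f$ and $C_h$ respectively; these fibers would then meet $E$ in points of $E_\alpha$ or in components $E_{\Delta'}$ attached to $E_\alpha$ through a point of $E_\alpha$, contradicting that $\w C_f\cap E$ and $\w C_h\cap E$ are both confined to the single side $\Delta$ (a dicritical $E_\alpha$ forces the pencil to ``spread'' its fibers over the various neighbours of $\alpha$ and over generic smooth points of $E_\alpha$, which is incompatible with only one neighbouring component being hit). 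Hence $E_\alpha$ is not dicritical, so $\w\varphi_\alpha\equiv c$ for a constant $c\in K\setminus\{0\}$, giving $\nu_\alpha(f-ch)>\nu_\alpha(f)$.

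Finally, for the statement $\nu_\gamma(f)=\nu_\gamma(h)$ when $\gamma\notin\Delta$, I would argue along the path joining $\alpha$ to $\gamma$ inside ${\cal G}(\pi)\setminus\Delta$. Since $\w\varphi_\alpha\equiv c$, by the first quoted consequence of \cite{pencils} (the one describing the behaviour of $\w\varphi$ on the maximal connected set where it is constant) $\w\varphi\equiv c$ along the maximal connected subset $A$ of ${\cal G}(\pi)$ containing $\alpha$ on which $\w\varphi$ is constant, and the strict transform $\w C_{f-ch}$ meets $E_A$. Because neither $\w C_f$ nor $\w C_h$ meets $E$ outside $E_\Delta$, the rational function $\w\varphi$ has neither zeros nor poles along any $E_\gamma$ with $\gamma\notin\Delta$; this forces $A\supseteq{\cal G}(\pi)\setminus\Delta$, and on each such $E_\gamma$ the identity $\w\varphi\equiv c\neq 0,\infty$ yields $\nu_\gamma(f)=\nu_\gamma(f-ch)+0$... more precisely $\nu_\gamma(f-ch)\ge\nu_\gamma(f)=\nu_\gamma(h)$ and the constancy of $\w\varphi$ gives equality $\nu_\gamma(f)=\nu_\gamma(h)$ for all $\gamma\notin\Delta$. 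The main obstacle I anticipate is the second step — cleanly deriving a contradiction from $E_\alpha$ being dicritical — which requires careful bookkeeping of which components of $E$ the fibers of $L$ can meet; once dicriticalness is excluded, the rest follows formally from the constancy of $\w\varphi$ on the complement of $\Delta$ and the quoted structure theorems.
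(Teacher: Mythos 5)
Your plan is the same in spirit as the paper's: work with the pencil $L=\{\lambda f+\mu h\}$, lift $\varphi=f/h$ to $X$, and invoke the structure results from \cite{pencils} on how fibers of pencils meet the exceptional divisor. The paper, however, proves the two conclusions in the opposite order and thereby avoids a difficulty that your write-up runs into. It first establishes $\nu_\gamma(f)=\nu_\gamma(h)$ for all $\gamma\in T:=\mathcal G(\pi)\setminus\Delta$ by a direct contradiction: if $\nu_\gamma(f)>\nu_\gamma(h)$ for some $\gamma\in T$, then $\w\varphi\equiv 0$ on $E_\gamma$, and the quoted consequence of \cite{pencils} forces $\w C_f$ to meet $E_A$, where $A$ is the maximal connected subset of $\mathcal G(\pi)\setminus\{\alpha\}$ on which $\w\varphi\equiv 0$; since $A\subset T$, this contradicts $\w C_f\cap E\subset E_\Delta$ (and symmetrically for $<$). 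Only afterwards does the paper deduce that no $E_\beta$, $\beta\in T$, is dicritical for $L$ and hence that $\w\varphi|_{E_T}$ is a nonzero, non-infinite constant $c$, giving $\nu_\alpha(f-ch)>\nu_\alpha(f)$.

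The problematic step in your version is the sentence ``Because neither $\w C_f$ nor $\w C_h$ meets $E$ outside $E_\Delta$, the rational function $\w\varphi$ has neither zeros nor poles along any $E_\gamma$ with $\gamma\notin\Delta$.'' Having no zero (pole) along $E_\gamma$ means precisely $\nu_\gamma(f)\le\nu_\gamma(h)$ (resp.\ $\ge$), so this assertion \emph{is} the conclusion $\nu_\gamma(f)=\nu_\gamma(h)$ in disguise, and the confinement of the strict transforms to $E_\Delta$ does not imply it directly --- the multiplicities of $E_\gamma$ in $\mathrm{div}(\w\varphi)$ are governed by $\nu_\gamma(f)-\nu_\gamma(h)$, not by where $\w C_f$ and $\w C_h$ meet $E$. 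The implication is true, but it requires exactly the structure-theorem contradiction argument that the paper carries out; as written your step is circular. There is a second, smaller gap in the same step: to get $A\supseteq T$ from ``no zeros or poles on $E_T$'' you also need that no $E_\gamma$ with $\gamma\in T$ is dicritical (a dicritical $E_\gamma$ has neither a zero nor a pole along it, yet $\w\varphi|_{E_\gamma}$ is nonconstant), and your proposal only rules out dicriticalness of $E_\alpha$. Finally, the paper's opening reduction --- replacing $\pi$ by an embedded resolution of $C_{fh}$, which leaves $T$ untouched since the extra blow-ups occur over $E_\Delta$ --- is worth keeping: it guarantees the strict transforms meet $E$ only at smooth points, which cleans up the dicriticalness discussion in your second step considerably.
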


\begin{proof}
We can assume that $\pi$ is an embedded resolution of the curve
$C_{fh}$, since the additional blowing-ups we need for it do not
modify the connected subset $T=\mathcal G(\pi)\setminus\Delta$.

If $\nu_{\gamma}(f)>\nu_{\gamma}(h)$ for some $\gamma\in T$, we
deduce that $\w{C}_f$ intersects $E_A$,  where $A$ is the maximal
connected subset of ${\cal G}(\pi) \setminus \{\alpha\}$ such that
$\gamma\in A$ and $\nu_\beta(f)>\nu_\beta(h)$ for every $\beta\in
A$. In particular, as $A\subset T$, $\w{C}_f$ intersects $E_{T}$,
which contradicts the hypothesis. Thus,
$\nu_\gamma(f)=\nu_\gamma(h)$ for every $\gamma\in T$ and the
lifting $\w{\varphi}$ of the rational function $\varphi=f/h$ to the
space $X$ is defined at every point of $E_T$. Moreover, $\w{\varphi}
|_{E_{\beta}} : E_{\beta}\to \PP^1$ cannot be surjective for
$\beta\in T$, since $\w{C}_h\cap E_T= \emptyset$ and $\w{C}_f\cap
E_T= \emptyset$. Therefore, there exists $c\in \PP^1_K$, $c\neq 0,
\infty$, such that $\w{\varphi}|_{E_T}\equiv c$. Then the lifting of
$(f-c h)/h$ vanishes on $E_T$ and in particular $\nu_\alpha(f-c
h)>\nu_\alpha(h)$ (in fact $\nu_\gamma(f-ch) > \nu_\gamma(h)$ for
every $\gamma\notin \Delta$).
\end{proof}

\begin{remark}
Let $f,h\in R$ such that $\nu_\alpha(f)=\nu_\alpha(h)$ and assume
that there exists $c \in \PP^1$, $c\neq 0, \infty$,  such that
$\nu_\alpha(f-ch)>\nu_\alpha(f)$. Then, the lifting of the rational
function $\varphi=f/h$ is constant and equal to $c$ along
$E_\alpha$. As a consequence, the strict transforms of $C_f$ and
$C_h$ intersect the same points of $E_\alpha$ and the same connected
components of ${\cal G}(\pi) \setminus \{\alpha\}$ (otherwise the
corresponding point of intersection in $E_\alpha$ must be  a zero or
a pole of $\varphi$).

On the other hand, let $f \in R$ be such that $\w{C}_f\cap E = P\in
E_\alpha$ is a smooth point of $E$, set $r=(E_{\alpha}, \w{C}_f)$
and pick $Q_{\alpha}$ by $P\in E_\alpha$. Then
$\nu_\alpha(Q_{\alpha}^r)=\nu_\alpha(f)$ and after some additional
blowing-ups we could apply the above proposition, proving the
existence of $c\neq 0,\infty$ such that
$\nu_\alpha(f-cQ_\alpha^r)>\nu_\alpha(f)$.
\end{remark}

\medskip

Now we recall some known facts about  curve singularities and
divisorial valuations.

\medskip
Let $\nu$ be a divisorial valuation,  $f\in R$ a general element of
$\nu$ defining a curve $C_f$ and $v$ the discrete valuation of the
fraction field of $R/(f)$ given by its integral closure. Let $h$ be
an element of $R$ such that the strict transform $\w{C}_h$ of $C_h$
by the minimal resolution of $\nu$ does not intersect the strict
transform $\w{C}_f$ of $C_f$. Then Equality~(\ref{eq00}) implies
that $\nu(h)=v(h)=(f,h)$. If $\w{C}_f\cap \w{C}_h\neq \emptyset$ one
can use a generic element $f'$ for which $\w{C}_{f'}\cap
\w{C}_h=\emptyset $ and $\nu(h)= v'(h)$, where $v'$ is the valuation
corresponding to $f'$.

The dual graph of the minimal resolution of a valuation
$\nu$ looks like that of Figure \ref{fig0}, where $\alpha(\nu)$ is
the vertex corresponding to the divisor $E_{N+1}=E_{\alpha(\nu)}$
defining the valuation $\nu$, $st_i$ stands for the star vertex of
the dead end $\rho_i$ and $\Gamma_i$ denotes the path from
$st_{i-1}$ to $\rho_{i}$.
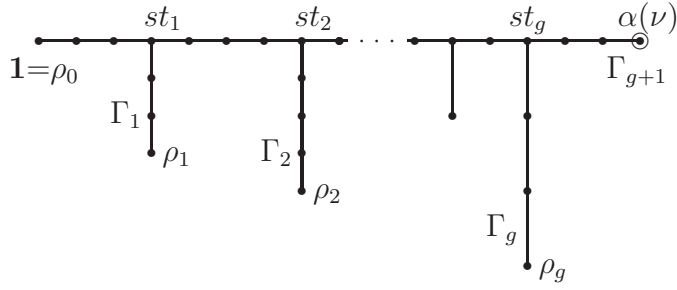
\begin{figure}[h]
$$
\unitlength=1.00mm
\begin{picture}(80.00,30.00)(-10,3)
\thicklines \put(-5,30){\line(1,0){41}}
\put(44,30){\line(1,0){31}} \put(38,30){\circle*{0.5}}
\put(40,30){\circle*{0.5}} \put(42,30){\circle*{0.5}}
\put(30,10){\line(0,1){20}} \put(50,20){\line(0,1){10}}
\put(60,0){\line(0,1){30}} \put(10,15){\line(0,1){15}} \thinlines
\put(20,30){\circle*{1}} \put(30,30){\circle*{1}}
\put(50,30){\circle*{1}} \put(60,30){\circle*{1}}
\put(65,30){\circle*{1}} \put(70,30){\circle*{1}}
\put(75,30){\circle*{1}} \put(75,30){\circle{2}}

\put(30,20){\circle*{1}} \put(60,20){\circle*{1}}
\put(60,10){\circle*{1}} \put(10,30){\circle*{1}}
\put(30,10){\circle*{1}} \put(50,20){\circle*{1}}
\put(60,0){\circle*{1}} \put(-5,30){\circle*{1}}
\put(0,30){\circle*{1}} \put(5,30){\circle*{1}}
\put(15,30){\circle*{1}} \put(25,30){\circle*{1}}
\put(35,30){\circle*{1}} \put(45,30){\circle*{1}}
\put(55,30){\circle*{1}} \put(10,25){\circle*{1}}
\put(10,20){\circle*{1}} \put(10,15){\circle*{1}}
\put(30,25){\circle*{1}} \put(30,15){\circle*{1}}
\put(35,30){\circle*{1}} \put(-9,25){{\bf 1}=$\rho_0$}
\put(11.5,14){$\rho_1$} \put(4.5,19){$\Gamma_1$}
\put(31.5,9){$\rho_2$} \put(24.5,14){$\Gamma_2$}
\put(61.5,-1){$\rho_g$} \put(54.5,4){$\Gamma_g$}
\put(9,32){$st_1$} \put(29,32){$st_2$} \put(57.5,32){$st_g$}
\put(72,32){$\alpha(\nu)$} \put(70.5,25){$\Gamma_{g+1}$}
\end{picture}
$$
\caption{The dual graph of a divisorial valuation.} \label{fig0}
\end{figure}

If $C_f$ is general for $\nu$ (i.e., $f$ is a general element of
$\nu$), then the dead ends of $\mathcal G(\pi)$, $\seq {\rho}0g$,
are also dead ends for the dual graph of $C_f$, which is the dual
graph of the minimal embedded resolution of $C_f$ together with an
arrow attached to the vertex, $\alpha(f)$, corresponding to the
component intersected by $\w{C}_f$. We will denote
$Q_i:=Q_{\rho_i}$ and we set $\betab_i = v(Q_i)$ ($0 \le i \le
g$), values which are usually called {\bf maximal contact values}
of the curve singularity $C_f$.
It is known that the set $\{\seq{\betab}0g\}$ and the Puiseux
pairs of $C_f$, and hence the equisingularity type of $C_f$,  are
equivalent data (e.g. $\betab_0$ is the multiplicity $m(f)$ of
$C_f$ at the origin). Moreover, $\{\seq{\betab}0g\}$ is a minimal
set of generators of the semigroup of values $S_{C_f} :=
\{v(h)\mid h\in R/(f)^*\}$ of $C_f$, $R/(f)^*$ denoting the
nonzero elements of the ring $R/(f)$.

For the divisorial valuation $\nu$ we have $\nu(Q_i) =\betab_i = v(Q_i)$ and so for the
semigroup of values of $\nu$, $S_{\nu}:= \{\nu(h)\mid h\in R\setminus\{0\}\}$, one has
$S_{\nu} = \langle\seq{\betab}0g\rangle = S_{C_f}$. Thus, arithmetical properties of $v$
are also true for the valuation $\nu$ (in \cite{tei}, the reader can see proofs  for the
main properties which we will use later in this context). For the sake of completeness we
will denote $\betab_{g+1} = \nu(Q_{\alpha(\nu)})$. It holds that $\betab_{g+1} =
e_{g-1}\betab_g +c$, where $e_{g-1}$ is the smallest positive integer such that $e_{g-1}
\betab_g\in \langle\seq{\betab}0{g-1}\rangle$ and $c\ge 0$ is the number of blowing-ups
needed to create $E_{\alpha(\nu)}$ after the divisor corresponding to $st_g$ was
obtained.
Thus, $\betab_{g+1}$ gives an additional datum to the semigroup of values $S_{\nu}$ which
permits to recover the dual graph of the divisorial valuation $\nu$ (see \cite{s-2}). The
element $\betab_{g+1}$ has an expression $\betab_{g+1}= \sum_{j=0}^{g}\lambda_j\betab_j$
with $\lambda_j\ge 0$ for $0 \le j \le g$, which is unique if we add some restrictions to
the coefficients $\lambda_j$. The case $c=0$ corresponds to $\alpha(\nu)=st_g$,  or
equivalently, to the case in which ${\cal G}(\pi)\setminus \{\alpha(\nu)\}$ has two
connected components, and in this case $\lambda_g=0$, thus, $\betab_{g+1} = e_{g-1}
\betab_g = \sum_{j=0}^{g-1}\lambda_{j}\betab_j$.

\medskip

For simplicity, we will often use the term ``monomial'' to  indicate a monomial in the set $\{Q_\rho\mid\rho \in \mathcal E\}$, that is, a finite product of the type $\prod_{\rho\in {\cal E}} Q_\rho^{\lambda_\rho}$ with $\lambda_\rho\in\mathbb Z_{\ge 0}$.

\begin{proposition}
\label{mainl} Let $\pi:X\to \spec R$ be a modification. Pick
$\alpha\in {\cal G}(\pi)$ and let $\Delta$ be a connected
component of ${\cal G}(\pi)\setminus \{\alpha\}$. Then, there
exists a monomial $q_{\Delta} = \pr_{\rho\in {\cal E}\cap \Delta}
Q_{\rho}^{\lambda_\rho} \;$ such that $\;\nu_{\gamma}(q_{\Delta})
= \nu_{\gamma}(Q_{\alpha}) \;$ if $\gamma\in {\cal G}(\pi)
\setminus \Delta$ and  $ \; \nu_{\gamma}(q_{\Delta}) >
\nu_{\gamma}(Q_{\alpha}) \; $ otherwise.
\end{proposition}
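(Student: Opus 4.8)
The plan is to build the monomial $q_\Delta$ one dead end at a time, using Proposition~\ref{lem1} (together with Proposition~\ref{mainl} applied to smaller configurations, via induction) to control the values. First I would set up an induction on the number of dead ends of ${\cal G}(\pi)$ lying in $\Delta$, or equivalently on the ``size'' of $\Delta$. The base case is when $\Delta$ contains a single dead end $\rho$ and no star vertices; here one expects that a suitable power $Q_\rho^{\lambda}$ already does the job: choosing $\lambda$ so that $\nu_\alpha(Q_\rho^\lambda) = \nu_\alpha(Q_\alpha)$ (which is possible because $\nu_\alpha(Q_\rho)$ divides $\nu_\alpha(Q_\alpha)$ along the single path from $\alpha$ into $\Delta$ — this is the classical divisibility among the $\bar\beta_i$'s recalled just before the statement), the strict transform $\w{C}_{Q_\rho^\lambda}$ meets $E$ only inside $E_\Delta$, and then Proposition~\ref{lem1}, applied with $h = Q_\rho^\lambda$ (after possibly a few extra blowing-ups that do not alter $T = {\cal G}(\pi)\setminus\Delta$), yields $\nu_\gamma(h) = \nu_\gamma(Q_\alpha)$ for $\gamma\notin\Delta$ and $\nu_\gamma(h) > \nu_\gamma(Q_\alpha)$ for $\gamma\in\Delta$.

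For the inductive step I would look at the first star vertex $\sigma$ of ${\cal G}(\pi)$ encountered on the path from $\alpha$ into $\Delta$ (if there is none, we are in the base case). Removing $\sigma$ splits $\Delta$ into the part $\Delta_0$ on the $\alpha$-side of $\sigma$ and the remaining connected components $\Delta_1,\ldots,\Delta_s$ hanging off $\sigma$ on the far side. The idea is: each $\Delta_j$ ($j\ge 1$) is a connected component of ${\cal G}(\pi)\setminus\{\sigma\}$ not containing $\alpha$, so by the induction hypothesis (applied to the vertex $\sigma$) there is a monomial $q_{\Delta_j}$ in $\{Q_\rho \mid \rho\in{\cal E}\cap\Delta_j\}$ with $\nu_\gamma(q_{\Delta_j}) = \nu_\gamma(Q_\sigma)$ off $\Delta_j$ and $>$ on $\Delta_j$. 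Multiplying together appropriate powers of $Q_{\rho}$ for the dead ends in $\Delta_0$ and of the $q_{\Delta_j}$, one can arrange a monomial $q$ supported on ${\cal E}\cap\Delta$ with $\nu_\sigma(q) = \nu_\sigma(Q_\alpha)$ and with strict transform meeting $E$ only inside $E_\Delta$; the condition at $\sigma$ plus the fact that $q$ raises values strictly on all of $\Delta$ (since each factor does on its own piece, and the pieces cover $\Delta$) lets me again invoke Proposition~\ref{lem1} (or directly Proposition~\ref{lem3} with $f = Q_\alpha$, $h = q$) to conclude $\nu_\gamma(q) = \nu_\gamma(Q_\alpha)$ for all $\gamma\in T$ and $\nu_\gamma(q) > \nu_\gamma(Q_\alpha)$ for all $\gamma\in\Delta$.

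The main obstacle I anticipate is the bookkeeping needed to guarantee simultaneously that (i) the exponents can be chosen so that the value at the ``gate'' vertex ($\alpha$ in the base case, $\sigma$ in the inductive step) matches exactly, and (ii) the strict transform of the resulting monomial does not escape $E_\Delta$ — i.e., that the various factors' strict transforms, although they may pass through $E_\sigma$, do so in a way compatible with applying Proposition~\ref{lem1}. Point (i) rests on the arithmetic of the maximal contact values (the divisibility $e_{g-1}\bar\beta_g \in \langle \bar\beta_0,\ldots,\bar\beta_{g-1}\rangle$ and its analogues along the tree), which is standard but must be tracked along each branch; point (ii) is where one has to be slightly careful about which component of ${\cal G}(\pi)\setminus\{\sigma\}$ each $\w{C}_{q_{\Delta_j}}$ actually lands in, and it may be cleanest to first reduce to the case where $\pi$ is an embedded resolution of all the curves involved, as in the proof of Proposition~\ref{lem3}, so that intersection points with $E$ are controlled combinatorially by the dual graph.
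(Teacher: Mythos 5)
Your opening reduction is the same as the paper's, and it is correct: since the strict transform of any monomial in the $Q_\rho$, $\rho\in{\cal E}\cap\Delta$, meets the exceptional divisor only along $E_\Delta$, it suffices to produce such a monomial with $\nu_\alpha(q_\Delta)=\nu_\alpha(Q_\alpha)$ and then invoke Proposition~\ref{lem1}. The genuine gap is in your inductive step. Every monomial you can build from the recursive data has $\nu_\alpha$--value a multiple of $\nu_\alpha(Q_\sigma)$: the chain $\Delta_0$ consists of degree--two vertices and so contributes no dead ends, and each $q_{\Delta_j}$ satisfies $\nu_\alpha(q_{\Delta_j})=\nu_\alpha(Q_\sigma)$ because $\alpha\notin\Delta_j$. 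But $\nu_\alpha(Q_\sigma)$ need not divide $\nu_\alpha(Q_\alpha)$. Take $\nu_\alpha$ to be the $(2,3)$--cusp valuation followed by one free blowup, so the dual graph of $\pi$ is the chain $\rho_0-\sigma-\rho_1$ with $\alpha$ attached to $\sigma$, $\bar\beta_0=2$, $\bar\beta_1=3$, $\nu_\alpha(Q_\alpha)=\bar\beta_2=7$. Here $\Delta={\cal G}(\pi)\setminus\{\alpha\}$, $\sigma$ is the only star, $\nu_\alpha(Q_\sigma)=6$, and the recursion returns $q_{\Delta_1}=Q_{\rho_0}^3$, $q_{\Delta_2}=Q_{\rho_1}^2$, each of $\nu_\alpha$--value $6$. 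No product of these can have $\nu_\alpha$--value $7$; the correct monomial $q_\Delta=Q_{\rho_0}^2Q_{\rho_1}$ comes from the expansion $7=2\bar\beta_0+\bar\beta_1$, which is invisible in the recursive data. So the induction discards exactly the arithmetic you need, and the decomposition of $\bar\beta_{g+1}$ must be done directly at $\alpha$ --- which is what the paper does by first treating the minimal resolution of $\nu_\alpha$ and reading off the exponents from $\bar\beta_{g+1}=\sum_i\lambda_i\bar\beta_i$.

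Two further remarks. First, you aim for $\nu_\sigma(q)=\nu_\sigma(Q_\alpha)$ and then want to ``invoke Proposition~\ref{lem1} or \ref{lem3}''; neither applies. Proposition~\ref{lem1} at $\sigma$ would require $\nu_\sigma(q)=\nu_\sigma(Q_\sigma)$, and Proposition~\ref{lem3} at $\sigma$ would require $\widetilde{C}_q$ and $\widetilde{C}_{Q_\alpha}$ to meet $E$ inside the \emph{same} connected component of ${\cal G}(\pi)\setminus\{\sigma\}$, whereas $\widetilde{C}_q$ lands in the $\Delta_j$'s and $\widetilde{C}_{Q_\alpha}$ on the $\alpha$--side. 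What you actually need, as your own first paragraph identifies, is $\nu_\alpha(q)=\nu_\alpha(Q_\alpha)$, and the example above shows the recursion cannot deliver it. Second, the base--case divisibility $\nu_\alpha(Q_\rho)\mid\nu_\alpha(Q_\alpha)$ is asserted but not established; it is true, but proving it already requires passing to the minimal resolution $\pi'$ of $\nu_\alpha$ and checking how $\Delta$ sits relative to the arms of the Spivakovsky diagram, together with the observation (proved as a separate claim in the paper) that any connected piece of ${\cal G}(\pi)\setminus{\cal G}(\pi')$ attached at a smooth point of some $E_\beta$ contains a dead end $\rho$ with $\nu_\gamma(Q_\rho)=\nu_\gamma(Q_\beta)$ for all $\gamma$ outside that piece. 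In short, the paper avoids the induction altogether and replaces it by an explicit reduction to $\pi'$; the inductive skeleton in your proposal cannot be repaired without reproducing that reduction, at which point the induction becomes redundant.
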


\begin{proof} We only need to
find a monomial $q_{\Delta} = \pr_{\rho\in {\cal E}\cap \Delta}
Q_{\rho}^{\lambda_\rho} \;$ such that $\;\nu_{\alpha}(q_{\Delta})
= \nu_{\alpha}(Q_{\alpha}) \;$, because it would satisfy  $\widetilde{C}_{q_{\Delta}}\cap E_{\Delta}\neq\emptyset$, and then,
by Proposition~\ref{lem1}, it  solves our problem.

Firstly, let us assume that $\pi: X\to \spec R$ is the minimal resolution of
$\nu_{\alpha}$. With the above notations,
 $\betab_{g+1}=\nu_{\alpha}(Q_{\alpha})= \sum_{i=0}^g \lambda_i
\betab_i$ and we have two possibilities depending whether
${\cal G}(\pi)\setminus \{\alpha\}$ is connected or not.
In the  first case, the
decomposition of $\betab_{g+1}$ provides the monomial $q_\Delta =
\pr_{i=0}^{g}Q^{\lambda_i}_{\rho_i}$. Otherwise ${\cal G}(\pi)\setminus \{\alpha\}$ has two
connected components; then, if  $[{\bf
1}]\in \Delta$, we have $\{\seq{\rho}0{g-1}\} = {\cal E} \cap
\Delta$ and the monomial is $q_\Delta = \pr_{i=0}^{g-1}Q^{\lambda_i}_{\rho_i}$
(recall that in this case $\lambda_g=0$),
and if  $[{\bf
1}]\notin \Delta$ we have $\{\rho_{g}\} = {\cal E}\cap \Delta$ and the
monomial is  $q_{\Delta}=
Q^{e_{g-1}}_{g}$.

In general, let us denote by $\pi': (Y,F)\to \spec R$ the minimal
resolution of $\nu_\alpha$ and let $\sigma: X\to Y$ be the
composition  of the sequence of point blowing-ups which produces $X$
starting from $Y$. We claim that if $\Omega$ is any connected component of ${\mathcal
G}(\pi)\setminus {\mathcal G}(\pi')$ such that $\sigma(E_{\Omega})=P\in
E_{\beta}$ is a smooth point of $F$,  then there exists a dead end $\rho\in \mathcal E\cap\Omega$
such that $\nu_{\gamma}(Q_{\rho})=\nu_{\gamma}(Q_{\beta})$ for any $\gamma\notin\Omega$. Indeed, it suffices to choose $\rho$ as an element of $\mathcal E\cap\Omega$ making minimal the number of blowing-ups needed to obtain it, since for this $\rho$,
the strict transform of
$Q_\rho$ by $\pi'$ is smooth and transversal to $F$ at $P$.

Now,  if $\sigma(E_{\Delta})$ is a smooth point $P\in E_{\alpha}$ of $F$,
the above construction applied to $\Delta$ gives $\rho\in {\cal E}\cap \Delta$ such that
$\nu_\alpha(Q_\rho)=\nu_\alpha(Q_\alpha)$, so we can choose
$q_\Delta=Q_\rho$.

Otherwise,
$\sigma(E_{\Delta})\subset  \k{F\setminus E_\alpha}$. In this case, if some dead end $\rho'$
of $\mathcal G(\pi')$ is not a dead end of $\mathcal G(\pi)$, then there exists a connected component
$\Omega$ of ${\cal G}(\pi)\setminus {\cal G}(\pi')$ such that
$\sigma(E_{\Omega}) = P\in E_{\rho'}$, $P$ a smooth point of $F$, and our claim gives a dead end $\rho$ of $\mathcal G(\pi)$  such that $\nu_\gamma(Q_{\rho}) = \nu_\gamma(Q_{\rho'})$ (and $\rho\in\Delta$ if $\rho'\in\Delta$). Hence, if $\{\seq{\rho'}0g\}$ are the dead ends of $\mathcal G(\pi')$, we can find $\{\seq{\rho}0g\}$ in $\mathcal E\cap\Delta$ such that $\nu_\gamma(Q_{\rho_i}) = \nu_\gamma(Q_{\rho'_i}) = \betab_i$ for $0\le i\le g$, and the the monomial is given as in the
case in which $\pi$ is the minimal resolution.
\end{proof}

\medskip

To end this section, assume that $h \in R$ is irreducible and
$\pi:
X\to \spec R$ a modification such that the strict transform of the
curve $C_h$ by $\pi$ only meets one irreducible component, that
we will denote $E_{\alpha(h)}$, of the exceptional divisor of $\pi$.

\begin{proposition}\label{mainp}
For any vertex $\beta \in {\cal G}(\pi)$,  there exists a monomial
$q= \pr_{\rho\in {\cal E}}Q_\rho^{\lambda_\rho}$ such that
$\nu_{\beta}(q)= \nu_\beta(h)$ and $\nu_{\gamma}(q)\ge
\nu_{\gamma}(h)$ for every $\gamma \neq \beta$. Moreover, if $\beta
\neq \alpha(h)$, then the vertices $\rho$ such that $\lambda_\rho
\neq
0$ belong to the connected component of $\alpha(h)$ in
${\cal G}(\pi)\setminus \{\beta\}$.
\end{proposition}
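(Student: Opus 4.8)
The plan is to reduce the statement to the case $\beta=\alpha(h)$, which is settled directly by Proposition~\ref{mainl}, and then to move the base point one step at a time along the geodesic joining $\beta$ to $\alpha(h)$, each step being powered by Proposition~\ref{lem3}.

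\smallskip
\emph{Reduction.} First I would record the geometry of $h$. Since $h$ is irreducible, $R/(h)$ is a one-dimensional complete local domain with algebraically closed residue field, hence analytically irreducible; therefore the strict transform of $C_h$ by any modification is irreducible and meets the corresponding total exceptional divisor at a single point. By hypothesis that point, call it $P_0$, lies on $E_{\alpha(h)}$, and, lying on no other component, it is a nonsingular point of $E$. Put $r:=(\widetilde C_h\cdot E_{\alpha(h)})_{P_0}\geq 1$. Then $\nu_\gamma(h)=r\,\nu_\gamma(Q_{\alpha(h)})$ for every $\gamma\in\mathcal G(\pi)$: picking a general element $g$ of $\nu_\gamma$ whose strict transform by $\pi$ is smooth and transverse to $E_\gamma$ at a nonsingular point of $E$ distinct from $P_0$ and from $\widetilde C_{Q_{\alpha(h)}}\cap E$, Equality~(\ref{eq00}) gives $\nu_\gamma(h)=(C_h,C_g)$ and $\nu_\gamma(Q_{\alpha(h)})=(C_{Q_{\alpha(h)}},C_g)$, and computing these numbers on $X$ as $(\widetilde C_h\cdot\pi^*C_g)$ and $(\widetilde C_{Q_{\alpha(h)}}\cdot\pi^*C_g)$, using that $\widetilde C_h$ (resp.\ $\widetilde C_{Q_{\alpha(h)}}$) meets $E$ only along $E_{\alpha(h)}$, with multiplicity $r$ (resp.\ $1$), yields $(C_h,C_g)=r\,\nu_{\alpha(h)}(g)=r\,(C_{Q_{\alpha(h)}},C_g)$. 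Hence it is enough to produce a monomial $q=\pr_{\rho\in\mathcal E}Q_\rho^{\lambda_\rho}$ with $\nu_\beta(q)=r\,\nu_\beta(Q_{\alpha(h)})$, with $\nu_\gamma(q)\geq r\,\nu_\gamma(Q_{\alpha(h)})$ for $\gamma\ne\beta$, and with the prescribed support.

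\smallskip
\emph{The case $\beta=\alpha(h)$.} Let $\Delta_0$ be any connected component of $\mathcal G(\pi)\setminus\{\alpha(h)\}$ (if $\mathcal G(\pi)=\{\alpha(h)\}$ take $q=Q_{\alpha(h)}^{\,r}$). By Proposition~\ref{mainl} there is a monomial $q_{\Delta_0}$ supported on $\mathcal E\cap\Delta_0$ with $\nu_\gamma(q_{\Delta_0})=\nu_\gamma(Q_{\alpha(h)})$ for $\gamma\notin\Delta_0$ and $\nu_\gamma(q_{\Delta_0})>\nu_\gamma(Q_{\alpha(h)})$ for $\gamma\in\Delta_0$; then $q:=q_{\Delta_0}^{\,r}$ has $\nu_{\alpha(h)}(q)=r\,\nu_{\alpha(h)}(Q_{\alpha(h)})$ and $\nu_\gamma(q)\geq r\,\nu_\gamma(Q_{\alpha(h)})$ for all $\gamma$. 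I keep in reserve the freedom of choosing $\Delta_0$.

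\smallskip
\emph{Induction on $d=d(\beta,\alpha(h))$.} The case $d=0$ is the previous paragraph. For $d\geq1$, let $\gamma_0$ be the neighbour of $\beta$ on the geodesic $[\beta,\alpha(h)]$ and let $\Delta^*$ be the connected component of $\alpha(h)$ in $\mathcal G(\pi)\setminus\{\beta\}$. If $d=1$, then $\gamma_0=\alpha(h)$, and I choose in the previous paragraph a component $\Delta_0$ of $\mathcal G(\pi)\setminus\{\alpha(h)\}$ contained in $\Delta^*$ (such a $\Delta_0$ exists unless $\alpha(h)$ is a dead end adjacent only to $\beta$, in which case $\Delta^*=\{\alpha(h)\}$ and one takes $q=Q_{\alpha(h)}^{\,r}$); since $\beta\notin\Delta_0$ one gets $\nu_\beta(q)=r\,\nu_\beta(Q_{\alpha(h)})$, and $q$ is supported on $\mathcal E\cap\Delta_0\subseteq\mathcal E\cap\Delta^*$. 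If $d\geq2$, then $\gamma_0\ne\alpha(h)$; let $\Delta^{**}$ be the connected component of $\alpha(h)$ in $\mathcal G(\pi)\setminus\{\gamma_0\}$. By the induction hypothesis applied to $\gamma_0$ (whose distance to $\alpha(h)$ is $d-1$) there is a monomial $q$ supported on $\mathcal E\cap\Delta^{**}$ with $\nu_{\gamma_0}(q)=\nu_{\gamma_0}(h)$ and $\nu_\gamma(q)\geq\nu_\gamma(h)$ for $\gamma\ne\gamma_0$. Now $\widetilde C_q\cap E\subseteq E_{\Delta^{**}}$ (each $Q_\rho$ with $\rho\in\Delta^{**}$ has its strict transform meeting $E$ inside $E_\rho$) and $\widetilde C_h\cap E=\{P_0\}\subseteq E_{\alpha(h)}\subseteq E_{\Delta^{**}}$; so Proposition~\ref{lem3}, applied with centre $\gamma_0$, the elements $q$ and $h$, and the component $\Delta^{**}$, gives $\nu_\gamma(q)=\nu_\gamma(h)$ for every $\gamma\notin\Delta^{**}$. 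Since $\gamma_0$ separates $\beta$ from $\alpha(h)$ we have $\beta\notin\Delta^{**}$, whence $\nu_\beta(q)=\nu_\beta(h)$; combined with the induction hypothesis (which covers the remaining $\gamma\in\Delta^{**}$, all of which are $\ne\gamma_0$) this gives $\nu_\gamma(q)\geq\nu_\gamma(h)$ for all $\gamma\ne\beta$. Finally, a path from a vertex of $\Delta^{**}$ to $\alpha(h)$ avoids $\gamma_0$, hence avoids $\beta$ (otherwise it would contain the sub-geodesic $[\beta,\alpha(h)]$, which passes through $\gamma_0$), so $\Delta^{**}\subseteq\Delta^*$ and $q$ is supported on $\mathcal E\cap\Delta^*$, closing the induction.

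\smallskip
The step I expect to cause the most trouble is not conceptual but bookkeeping: keeping straight the various connected components of $\mathcal G(\pi)$ after deleting a vertex, in particular verifying in the case $d=1$ that some component of $\mathcal G(\pi)\setminus\{\alpha(h)\}$ sits inside $\Delta^*$, and disposing of the degenerate ``leaf'' configurations. The heart of the argument is the observation that the support condition carried by the monomial constructed for $\gamma_0$ is precisely the hypothesis required to apply Proposition~\ref{lem3}, which then transports the value from $\gamma_0$ to $\beta$ without disturbing any of the other inequalities.
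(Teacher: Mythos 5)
Your proof is correct and rests on the same two ingredients as the paper's: the reduction from $h$ to $Q_{\alpha(h)}^r$ (the paper dispatches this by citing the remark after Proposition~\ref{lem3}, you spell it out via intersection numbers, both fine), and Proposition~\ref{mainl}. But the induction on $d(\beta,\alpha(h))$ and the invocation of Proposition~\ref{lem3} in the step $d\ge2$ are superfluous. You already quote, in your base case, the full conclusion of Proposition~\ref{mainl}: $\nu_\gamma(q_{\Delta_0})=\nu_\gamma(Q_{\alpha(h)})$ for \emph{every} $\gamma\notin\Delta_0$, and $>$ for $\gamma\in\Delta_0$. So whenever $\beta\ne\alpha(h)$ you may simply pick \emph{any} component $\Delta_0$ of $\mathcal G(\pi)\setminus\{\alpha(h)\}$ with $\beta\notin\Delta_0$ (this is exactly your $d=1$ argument, and it works regardless of the distance); then $\nu_\beta(q_{\Delta_0}^r)=\nu_\beta(h)$ and $\nu_\gamma(q_{\Delta_0}^r)\ge\nu_\gamma(h)$ for all $\gamma$, with support in $\mathcal E\cap\Delta_0$, and $\Delta_0\cup\{\alpha(h)\}$ is a connected subset of $\mathcal G(\pi)\setminus\{\beta\}$ containing $\alpha(h)$, so the support lands in $\Delta^*$. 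The only case left is when no such $\Delta_0$ exists, which forces $\alpha(h)$ to be a dead end (or $\mathcal G(\pi)=\{\alpha(h)\}$), and there one takes $q=Q_{\alpha(h)}^r$. This is precisely the paper's proof; your induction is re-proving, one edge at a time via Proposition~\ref{lem3}, the propagation of the equality that Proposition~\ref{mainl} already hands you in a single step.
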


\begin{proof}
We can choose $Q_{\alpha(h)}$  through  $P=E_{\alpha(h)}\cap
\w{C}_h$. Setting $r=(E_{\alpha(h)}, \w{C}_h)$ we have
$\nu_\beta(Q_{\alpha(h)}^r)=\nu_\beta(h)$ for any $\beta\in {\cal
G}(\pi)$ (see the remark after Proposition \ref{lem3}). So, it
suffices to obtain $q$ for the case $Q_{\alpha(h)}$, since then
$q^r$ would solve the problem for $h$.

Now,  the monomial $q_\Delta$ given in Proposition \ref{mainl} for any connected
component $\Delta$ of ${\cal G}(\pi) \setminus\{\alpha(h)\}$ such that $\beta \not \in
\Delta$, if it exists, satisfies the requirements of the proposition. Moreover, if $
\beta \neq \alpha(h)$, then $\beta \not \in \Delta\cup \{\alpha(h)\}$ and this set is a
connected subset of ${\cal G}(\pi)\setminus \{\beta\}$, thus $\Delta\cup \{\alpha(h)\}$
is contained in a connected component of ${\cal G}(\pi)\setminus \{\beta\}$.

Otherwise, that is $\beta$ belongs to every  connected component of
${\cal G}(\pi)\setminus \{\alpha(h)\}$, $\alpha(h)$ must be a dead
end and we can take $q=Q_{\alpha(h)}$.
\end{proof}

\section{Semigroup of values}\label{value}

Let $V=\{\seq{\nu}1r\}$ be a finite set of $r \geq 1$ divisorial
valuations and denote by $\mathbb Z_{\ge 0}$ the set of nonnegative
integers. The {\bf semigroup of values} of $V$ is the additive
subsemigroup $S_{V}$ of $\mathbb Z^r_{\ge 0}$ given by $$ S_V =\{
\nuv(h):=\val{\nu}{r}{h} \; |\; h\in R\setminus\{0\} \}\; . $$
The {\bf minimal resolution} of $V$ is a modification $\pi: (X,E)\to
(\spec R, \ideal{m})$ such that, for each $i\in\{1,\ldots,r\}$,
$\nu_i$ is the $E_{\alpha(i)}$-valuation for an irreducible
component $E_{\alpha(i)}$ of the exceptional divisor $E$, and $\pi$
is minimal with this property. It is clear that a minimal resolution
of $V$ can  be recursively obtained by blowing-up $\spec R$ at
$\ideal{m}$ and any new obtained space  $X_i$ at the closed centers
of the valuations in $V$. The dual graph of $V$ is the dual graph
of $\pi$ with the vertices $\alpha(i)$
highlighted (for example, using a different draw for the point,
see Figure~\ref{fig0}).

Let $C=\bigcup\limits_{i=1}^r C_i$ be a reduced curve, with
components $C_1,\ldots, C_r$, defined by an element $f\in R$, and
denote by $R/(f)^*$  the set of nonzero divisors of the ring
$R/(f)$. The {\bf semigroup of values} $S_{C}$ of $C$ is the
additive subsemigroup of $\ZZ_{\ge 0}^r$ given by
$$
S_{C} : = \{\vv(g)=\val{v}rg \; |\; g \in R/(f)^* \}\;,
$$
where each $v_i$ is the valuation corresponding to $C_i$. Sometimes we will consider ``the value'' $\vv(h)$ (not in $S_C$)  of zero divisors of $R/(f)$, understanding $\nu_i(h)=\infty$ for $h$ in the ideal of $R$ defining $C_i$, and $n< \infty$ for any  $n\in \ZZ_{\ge 0}$.

The dual graph of $C$ is the dual graph of its minimal embedded
resolution, attaching an arrow, for each irreducible component
$C_i$ of $C$, to the exceptional component which meets the strict
transform on $X$ of $C_i$. The equisingularity type of $C$ (i.e.,
the set of Puiseux pairs for each branch $C_i$ of $C$ together
with the intersection multiplicities between pairs of branches)
and its dual graph, labelling each  vertex $\alpha$ with the
minimal number of blowing-ups needed to create $E_{\alpha}$,
$w(\alpha)$, are equivalent data.

\medskip

Let ${\mathcal G}$ and $S_V$ be the dual graph and the semigroup
of values of a set $V=\{\seq{\nu}1r\}$ of divisorial valuations,
$r>1$. A {\bf general curve} of $V$ is a reduced plane curve with
$r$ branches defined by $r$ different equations given by general
elements of each valuation $\nu_i$. An element $\mm\in S_V$ is
said to be {\bf indecomposable} if we cannot write $\mm=\nn+\kk$
with $\nn,\kk\in S_V\setminus\{0\}$.

For $1\le i\le r$ set $\alpha(i)=\alpha(\nu_i)$, and for each vertex $\rho\in {\cal E}$
denote by $\beta_\rho$ the nearest vertex to $\rho$ in $\Omega =
\bigcup\limits_{i=1}^{r}[{\bf 1},\alpha(i)]$ (i.e. $\beta_\rho = \max ( \Omega\cap [{\bf
1}, \rho]) $). Consider the set
$$
{\cal H} =
\{\bf 1\}\cup{\cal E}
\cup \left(\Omega
\setminus \left \{
\Gamma \cup \{\beta_{\rho} \; |\; \rho\in {\cal E}\} \right
\}\right)\;,
$$
where
$\Gamma=\bigcap\limits_{i=1}^r
[{\bf 1},\alpha(i)]$. Then we can state the following

\begin{theorem}
\label{semi} The set of indecomposable elements of the semigroup
of values $S_V$ is the set $\{\nuv(Q_\alpha)\; |\; \alpha\in
{\cal H}\}$. In particular, $S_V$ is finitely generated.
\end{theorem}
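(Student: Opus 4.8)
The plan is to prove two facts and deduce the statement. Write $\Lambda=\{\nuv(Q_\alpha)\mid\alpha\in{\cal H}\}$. I will show (a) that $\Lambda$ generates $S_V$ as a semigroup, and (b) that each element of $\Lambda$ is indecomposable. Granting both, the set of indecomposable elements of $S_V$ is contained in $\Lambda$ --- indecomposable elements lie in every generating set --- and contains $\Lambda$ by (b), so it equals $\Lambda$; finite generation is then immediate since ${\cal H}$ is finite. I will also use the elementary observation that $\nuv$ is additive on products and that an irreducible factor of a nonzero nonunit lies in $\ideal m$, hence has positive value at every $\nu_i$; consequently $S_V$ is generated by values of irreducible elements, an indecomposable element is the value of an irreducible element, and for (a) it is enough to treat $\nuv(h)$ with $h\in\ideal m$ irreducible.

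For (a), given such $h$, choose a modification $\pi$ resolving $V$ and $C_h$ simultaneously, so that $\w{C}_h$ meets a single component $E_{\alpha(h)}$. The engine is a descent: I claim one can pick $\alpha_0\in{\cal H}$ such that $\nuv(h)-\nuv(Q_{\alpha_0})\in S_V$, unless $\nuv(h)$ is already in $\Lambda$; iterating and noting that $\sum_i\nu_i$ strictly decreases then proves (a). To locate $\alpha_0$ and to certify that the ``remainder'' belongs to $S_V$, I use Proposition~\ref{mainp} --- which, for a well-chosen coordinate $i$, provides a monomial in curvettes at the dead ends of ${\cal G}(\pi)$ with the right $\nu_i$-value and larger value elsewhere --- together with Proposition~\ref{mainl} and the minimal-resolution reduction inside its proof, which rewrites such monomials in curvettes at vertices of the minimal resolution of $V$. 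The role of the precise set ${\cal H}$ then emerges from the one-valuation generating-sequence theory of \cite{s-2} applied along each geodesic $[{\bf 1},\alpha(i)]$: the curvettes at ${\bf 1}$, at the dead ends ${\cal E}$, and at the remaining vertices of $\Omega\setminus(\Gamma\cup\{\beta_\rho\})$ suffice, because the star vertices of ${\cal G}$ occur among the branch points $\beta_\rho$, and at these and along the common part $\Gamma$ the curvette values are nonnegative combinations of the retained ones, with the coordinates off the relevant geodesic matching automatically by Proposition~\ref{lem3} and the remark after it.

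For (b), fix $\alpha\in{\cal H}$ and suppose $\nuv(Q_\alpha)=\nuv(f)+\nuv(g)$ with $f,g\in\ideal m$, which we may assume irreducible. If $\alpha={\bf 1}$, or $\alpha$ is a dead end that already appears in the minimal resolution of some single valuation $\nu_i$ with $\alpha\neq\alpha(i)$, then $\nu_i(Q_\alpha)$ is one of the maximal-contact values $\betab_k$ of $\nu_i$, a minimal generator of $S_{\nu_i}$; reading the hypothetical decomposition in coordinate $i$ gives $\nu_i(f)=0$ or $\nu_i(g)=0$, contradicting $f,g\in\ideal m$. The remaining vertices of ${\cal H}$ --- a dead end $\alpha(i)$, or a vertex of $\Omega$ whose curvette value is not of maximal-contact type --- have no such characteristic coordinate, and one argues geometrically instead: $Q_\alpha$ is a general element of the $E_\alpha$-valuation, so by Lemma~\ref{lem12} and Proposition~\ref{lem1} the pencil through $Q_\alpha$ and a general element of $\nu_\alpha$ has $E_\alpha$ as its only dicritical divisor, with the induced map an isomorphism on $E_\alpha$; a decomposition of $\nuv(Q_\alpha)$ would exhibit a reducible curve of that value with all of its branches subordinate to $E_\alpha$, incompatible with this uniqueness (equivalently, with $D(B^i)=J(B^i)/J(B^i+\ee)$ being two-dimensional and its analogue at the other vertices of ${\cal H}$).

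The step I expect to be the main obstacle is the descent in (a): producing \emph{some} expression of $\nuv(h)$ in curvettes is essentially Propositions~\ref{mainl}--\ref{mainp}, but showing that \emph{precisely} the vertices of ${\cal H}$ are required --- simultaneously that curvettes at $\Gamma$ and at the $\beta_\rho$ are superfluous and that no other curvette is --- forces one to interleave the single-valuation theory along the various geodesics $[{\bf 1},\alpha(i)]$ and to control how values at the off-geodesic valuations stabilize past the branch points. The indecomposability in (b) of $B^i$ and of the non-maximal-contact curvettes of ${\cal H}$ is the other delicate point, being the only place where the genuinely multi-valuative structure --- rather than each $\nu_i$ separately --- must be exploited.
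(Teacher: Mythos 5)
Your outline takes a genuinely different route from the paper. The paper's proof is a short reduction to the curve case: choosing a general curve $C$ of $V$, one has $S_V\subseteq S_C$, which already forces every $\nuv(Q_\alpha)$ with $\alpha\in{\cal H}$ to be indecomposable by Theorem~\ref{fel}; and for the converse, given $h$ with $\nuv(h)$ indecomposable, one chooses $C$ so that its strict transform is disjoint from $\widetilde C_h$, identifies $\nuv(h)=\vv(h)$ and $\nuv(Q_\alpha)=\vv(Q_\alpha)$, and then invokes the curve result (and its proof in \cite{lon}) to decompose $\vv(h)$ into terms $\vv(Q_\gamma)$ with $\gamma\in{\cal H}$, all of which lie in $S_V$, so that indecomposability in $S_V$ forces a single term. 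Your plan instead reconstructs the content of Theorem~\ref{fel} directly in the divisorial setting via a semigroup descent and a case-by-case indecomposability check.

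The trouble is that the two places you yourself flag as ``the main obstacle'' and ``the other delicate point'' are not gaps to be smoothed over but precisely where a proof is required; in the present form they are not supplied. For (a), the descent claim --- that for every irreducible $h$ either $\nuv(h)\in\Lambda$ or $\nuv(h)-\nuv(Q_{\alpha_0})\in S_V$ for some $\alpha_0\in{\cal H}$ --- is logically the whole content of the generation statement, and the paragraph sketching it (``interleave the single-valuation theory along the geodesics'', ``control how values stabilize past the branch points'') never says how to choose $\alpha_0$ nor why the difference lands in $S_V$. Note also that Propositions~\ref{mainl} and~\ref{mainp} speak about the dead ends ${\cal E}$ of the auxiliary modification $\pi$, which is a finer graph than the minimal resolution of $V$, so ${\cal E}\cap{\cal G}(\pi)$ is not ${\cal H}$; turning monomials in curvettes of $\pi$ into a genuine semigroup decomposition in the $\nuv(Q_\alpha)$, $\alpha\in{\cal H}$, is exactly the non-trivial step. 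For (b), the dichotomy you set up (maximal-contact coordinate vs.\ geometric argument) does not cover all of ${\cal H}$: in the paper's own example the vertex $2\in{\cal H}$ is neither $\mathbf 1$, nor a dead end, nor an $\alpha(i)$, and the paper proves $D(\mm)$ is two-dimensional only for $\mm=B^i=\nuv(Q_{\alpha(i)})$ (Proposition~\ref{cor10}); there is no ``analogue at the other vertices of ${\cal H}$'' already available, and the assertion that a decomposition would ``exhibit a reducible curve with all branches subordinate to $E_\alpha$'' is not justified and, as stated, not a property of the decomposition $\nuv(Q_\alpha)=\nuv(f)+\nuv(g)$. The paper sidesteps both of these difficulties at once by transporting the question to $S_C$ with a suitably chosen general curve, which is the idea missing from your proposal.
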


This theorem is the divisorial version of the next one which holds
for a reduced plane curve $C$ with $r$ branches (\cite{lon}).  In
it, we consider the dual graph of $C=\bigcup\limits_{i=1}^r C_i$ and
define ${\cal H}$ as above, and $\alpha(1),\ldots \alpha(r)$ are the
vertices with arrows, corresponding to the branches $C_1, \ldots,
C_r$ of $C$.

\begin{theorem}
\label{fel} The set of  indecomposable elements of the semigroup
$S_C$ is
$$
\{ \vv(Q_\alpha) \; |\; \alpha \in {\cal H}\} \cup \{
\vv(Q_{\alpha(i)}) + (0, \ldots,0, k,0, \ldots, 0) \; |\; i=1,
\ldots, r \quad k\geq 1 \},
$$
where  $k$ is in the $i$th component. $\;\; \Box$
\end{theorem}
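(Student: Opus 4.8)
The plan is to establish two facts, from which the statement follows at once: (i) every vector in the displayed set belongs to $S_C$ and cannot be written as a sum of two non zero elements of $S_C$; (ii) $S_C$ is generated, as a semigroup, by those vectors. Indeed, an indecomposable element is then a sum of listed vectors by (ii), hence --- being unsplittable --- is a single listed vector, while (i) says each listed vector is indecomposable. Membership in (i) is immediate: intersection multiplicity is additive over branches, so each $v_j$ is additive on products and $S_C$ is generated by the values $\vv(D)$ of irreducible germs $D\notin\{\seq C1r\}$; in particular $\vv(Q_\alpha)\in S_C$ for $\alpha\in{\cal H}$, with $Q_\alpha$ generic. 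For the second family, fix $i$, choose $Q_{\alpha(i)}$ through $P_i=\w{C}_i\cap E_{\alpha(i)}$, and take an irreducible $D$ whose strict transform agrees with that of $Q_{\alpha(i)}$ up to $E_{\alpha(i)}$, passes through $P_i$, and separates from $\w{C}_i$ after exactly $k$ more blowing-ups in a direction generic for all branches and components; the Noether formula $v_j(D)=(C_j,D)=\suma_\gamma m_\gamma(C_j)m_\gamma(D)$ (with $m_\gamma$ the multiplicity at the infinitely near point $\gamma$) gives $v_j(D)=v_j(Q_{\alpha(i)})$ for $j\neq i$ and $v_i(D)=v_i(Q_{\alpha(i)})+k$, as required.

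The main device for unsplittability is the projection of $S_C$ onto its $i$-th coordinate: its image lies in the numerical semigroup $S_{C_i}$ of the branch $C_i$, and it carries every non zero element of $S_C$ to a positive element of $S_{C_i}$ (every branch meets every $C_j$ at the closed point). Hence $\mm\in S_C$ is indecomposable as soon as one of its coordinates is a minimal generator of the corresponding $S_{C_i}$. This handles $\alpha={\bf 1}$, where $v_i(Q_{\bf 1})=m(C_i)=\betab_0$ is the least positive element of $S_{C_i}$; each dead end $\rho\in{\cal E}$, which is a dead end of the dual graph of some branch $C_i$, so that $v_i(Q_\rho)$ is a maximal contact value of $C_i$; and, applying the same argument to a branch whose resolution path runs through $\alpha$, each remaining $\alpha\in\Omega\setminus(\Gamma\cup\{\beta_\rho\})$, once one checks that the value $\nu_\alpha(Q_\alpha)$ attached to such a vertex is not a sum of values $\nu_\delta(Q_\delta)$ attached to vertices $\delta$ lying strictly closer to ${\bf 1}$ on the paths $[{\bf 1},\alpha(i)]$. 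For the second family, suppose $\vv(Q_{\alpha(i)})+ke_i=\mm+\nn$ with $\mm,\nn\in S_C\setminus\{0\}$ ($e_i$ the $i$-th unit vector); for $j\neq i$ one has $\mm_j+\nn_j=v_j(Q_{\alpha(i)})=(C_i,C_j)$, the value accumulated along the whole part of the resolution common to $C_i$ and $C_j$, and the maximal contact analysis on $C_i$ forbids splitting it with both summands positive unless one of $\mm,\nn$ is supported on the $i$-th coordinate alone --- impossible, since every non zero element of $S_C$ has all coordinates positive.

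For (ii) it suffices, by the reduction to irreducible germs, to write each $\vv(D)$ as a sum of listed vectors. If $E_\beta$ is the component met by $\w{D}$ in the minimal resolution of $C\cup D$, one checks $\vv(D)=\vv(Q_\beta)$ (the attaching point is a smooth point of that resolution off every branch). We argue by induction on the number of blowing-ups producing $\beta$. If $\beta$ is obtained from $\alpha(i)$ by blowing up $\w{C}_i\cap E_{\alpha(i)}$ and successive points on $\w{C}_i$, then $\vv(Q_\beta)=\vv(Q_{\alpha(i)})+ke_i$, which is listed (for $k\geq 1$) or reduces to the case $\beta=\alpha(i)$. Otherwise, along the bamboo joining $\beta$ to the nearest dead end or to $\Omega$, the identities $\betab_{k+1}=e_{k-1}\betab_k+c$ and $e_{k-1}\betab_k\in\langle\seq{\betab}0{k-1}\rangle$ let us subtract from $\vv(Q_\beta)$ either a copy of $\vv(Q_{\bf 1})$ (if $\beta\in\Gamma$) or a combination of the $\vv(Q_\rho)$, $\rho\in{\cal E}$, on that bamboo (if $\beta$ is a gluing vertex $\beta_\rho$, or $\beta\notin\Omega\cup{\cal E}$), the remainder being again an element of $S_C$ reachable with fewer blowing-ups; Proposition~\ref{mainp} provides the monomials in $\{Q_\rho\tq\rho\in{\cal E}\}$ realizing the subtracted pieces, so they indeed lie in $S_C$ with the prescribed multi-values. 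Iterating collapses $\vv(D)$ onto a sum of vectors $\vv(Q_\alpha)$ with $\alpha\in\{{\bf 1}\}\cup{\cal E}\cup\bigl(\Omega\setminus(\Gamma\cup\{\beta_\rho\})\bigr)={\cal H}$ and of second-family vectors, proving (ii).

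The main obstacle is the combinatorial control behind the last two steps. In (ii) one must verify that each remainder produced in the reduction is genuinely an element of $S_C$ with precisely the prescribed multi-value; this requires running the single-branch arithmetic of maximal contact values coherently on all branches at once, and it is here that Propositions~\ref{mainl} and~\ref{mainp} are essential. In (i) one must prove that exactly the vertices retained in ${\cal H}$ yield values $\nu_\alpha(Q_\alpha)$ not expressible through values attached to vertices strictly closer to ${\bf 1}$ --- equivalently, that the vertices of $\Gamma$ and the gluing vertices $\beta_\rho$ are precisely the decomposable ones. The delicacy throughout is that adjusting one generator moves every coordinate simultaneously, so the branchwise estimates have to be synchronized.
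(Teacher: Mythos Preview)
The paper does not prove Theorem~\ref{fel}: it is quoted from \cite{lon} (the closing $\Box$ in the statement signals this), and the proof environment that follows it actually proves Theorem~\ref{semi} by \emph{invoking} Theorem~\ref{fel}. So there is no argument in the paper to compare your proposal against; what one can say is whether your sketch could stand on its own.

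It cannot, and your final paragraph essentially concedes this. The two-step plan (i)--(ii) is the right shape, but the executions of both halves contain real gaps. For indecomposability in (i), your projection trick --- ``$\mm$ is indecomposable if some coordinate is a minimal generator of $S_{C_i}$'' --- works for ${\bf 1}$ and for dead ends, but it does \emph{not} handle a vertex $\alpha\in\Omega\setminus(\Gamma\cup\{\beta_\rho\})$ lying on the geodesic $[{\bf 1},\alpha(i)]$ between two star vertices of the resolution of $C_i$: for such $\alpha$ the coordinate $v_i(Q_\alpha)$ is in general a proper sum of maximal contact values of $C_i$, so no single projection detects indecomposability. The correct argument has to exploit a \emph{different} branch $C_j$ whose path has already split off, and show that no decomposition of $\vv(Q_\alpha)$ can be compatible on both coordinates simultaneously; you allude to this (``adjusting one generator moves every coordinate'') but never carry it out. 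The same difficulty infects your treatment of the second family: the phrase ``the maximal contact analysis on $C_i$ forbids splitting'' is an assertion, not an argument. For (ii), the inductive reduction you outline is in the spirit of \cite{lon}, but the crucial step --- checking that each remainder after subtraction is again a value $\vv(Q_{\beta'})$ for a vertex $\beta'$ strictly closer to ${\bf 1}$, \emph{coherently on all $r$ coordinates at once} --- is exactly what you flag as ``the main obstacle'' and leave undone. Propositions~\ref{mainl} and~\ref{mainp} give you monomials with the right value for \emph{one} chosen coordinate and inequalities for the rest; turning those inequalities into the required equalities is precisely the missing content.
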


\begin{proof}
Let us prove Theorem \ref{semi}. If $C=\bigcup\limits_{i=1}^r C_i$ is any general curve
of $V$, that is, $C_i$ is general for $\nu_i$,  then $S_V\subseteq S_{C}$, therefore, by
Theorem \ref{fel}, elements in the set $\{\nuv(Q_\alpha)\tq \alpha\in {\cal H}\}$ are
indecomposable. Conversely, given $h \in R$ such that $\nuv(h)$ is indecomposable in
$S_V$, choose a general curve $C$ of $V$ such that the strict transforms of $C$ and $C_h$
by the minimal resolution of $V$ do not intersect. So, from equality~(\ref{eq00}),
$\nuv(h)=\vv(h)$ and $\nuv(Q_{\alpha})=\vv(Q_{\alpha})$ for any vertex $\alpha$,
$\vv$ given by the valuations associated to $C$. Moreover, $h$
must be irreducible and by the proof of Theorem~\ref{fel}, \cite{lon}, $\vv(h)$
decomposes in $S_C$ as a sum of elements $\vv(Q_\gamma)$ with $\gamma \in \mathcal H$,
which proves that $\nuv(h)= \nuv(Q_\alpha)$ for some $\alpha\in {\cal H}$.
\end{proof}

\medskip

\begin{remark}
A consequence of Theorem \ref{semi} is  that the semigroup $S_V$
does not have conductor whenever $r>1$, that is, there is no element
$\delta\in S_V$ such that $\delta + \ZZ_{\ge 0}^r\subseteq S_V$.
However,
 the semigroup of values of a curve with
$r$ branches does have a conductor $\delta$ \cite[ Th. 2.7]{man},
and thus, it  cannot be finitely generated if $r>1$. In particular,
if $C$ is any general curve of $V$, $S_V \neq S_C$ when $r>1$
(recall that $S_V=S_C$ if $r=1$).
\end{remark}

\medskip

Considering the  ordering over $\ZZ^r$ given by $\nn \le
\mm\Leftrightarrow \mm-\nn\in\ZZ^r_{\ge 0}$, a finite set of
divisorial valuations $V=\{\seq{\nu}1r\}$ induces a multi-index
filtration of the ring $R$ by means of the valuation ideals
$J(\mm)$,
 $\mm=(\seq m1r)\in \ZZ_{\ge 0}^r$:
$$ J(\mm) : = \{g\in R \; |\; \nuv(g)\geq
\mm\}\;.
$$

For $J\subset \{1,\ldots,r\}$ denote by  $\ee_J$ the element of
$\ZZ_{\ge 0}^r$ whose $i$th component is equal to $1$ (respectively,
to $0$) if $i\in J$ (respectively, $i\not\in J$); denote
$\ee=\ee_{\{1,\ldots,r\}}$. We will use $\ee_i$ instead of
$\ee_{\{i\}}$.

We will denote $D(\mm)=J(\mm)/J(\mm+\ee)$  and $D_i(\mm)=J(\mm)/J(\mm+\ee_i)$ for $1\le
i\le r$. It is clear that the natural homomorphism $D(\mm)\rightarrow
D_1(\mm)\times\ldots\times D_r(\mm)$ is injective. For $h\in J(\mm)\setminus
J(\mm+\ee_i)$ we will denote
$\init_{\nu_i}(h)=h+J(\mm+\ee_i)\in D_i(\mm)$, and call it
{\bf the initial form of $h$} with respect to $\nu_i$.

When $r=1$, Nakayama's Lemma proves that for any $m\in \ZZ$, $D(m)$
is a finite dimensional $K$-vector space and, therefore, so are
$D(\mm)$ and $D_i(\mm)$ for $\mm\in \ZZ_{\geq 0}^r$. Set $d(\mm) =
\dim D(\mm)$ and  $d_i(\mm) := \dim D_i(\mm)$.

\medskip

In the sequel, we will set $B^i=\nuv(Q_{\alpha(i)})$,
$i=1,\ldots,r$. Let $f\in R$ be such that
$\nu_i(f)=\nu_i(Q_{\alpha(i)})$ (remember that $\alpha(i)$ denotes
the vertex $\alpha(\nu_i)$ corresponding to the divisor that
defines $\nu_i$). Then by Proposition~\ref{lem1}, $\nu_j(f)\ge
\nu_j(Q_{\alpha(i)})$ for $j=1,\ldots,r$. Moreover, by
Lemma~\ref{lem12}, if $\w{C}_f\cap
\w{C}_{Q(\alpha(i))}=\emptyset$,
there exists a unique point $P(f)$ in $E_{\alpha(i)}$ mapped to
$\infty$ by the lifting of the rational function
$\varphi=Q_{\alpha(i)}/f$, namely, $P(f)=\w{C}_f\cap
E_{\alpha(i)}$ if $\w{C}_f\cap E_{\alpha(i)}\neq \emptyset$ and
$P(f)=E_{\Delta}\cap E_{\alpha(i)}$ if $\w{C}_f\cap
E_{\alpha(i)}=\emptyset$ and $\Delta$ is the connected component
of $G(\pi)\setminus\{\alpha(i)\}$ such that $\w{C}_f\cap
E_{\Delta}\neq \emptyset$. Furthermore, we denote
$P(f)=\w{C}_{Q_{\alpha(i)}} \cap E_{\alpha(i)}$ whenever $\w{C}_f
\cap \w{C}_{Q_{\alpha(i)}}\neq\emptyset$.

\begin{proposition}\label{isom}
The map $\Phi: \PP D_i(B^i)\to E_{\alpha(i)}$ from the
projectivization of the vector space $D_i(B^i)$ to the exceptional
component $E_{\alpha(i)}$, which sends the class
$\init_{\nu_i}(f)$ to  $P(f)$, is an isomorphism. In particular,
$d_i(B^i)=2$ and a basis of $D_i(B^i)$ is given by the initial
forms of two elements $f$ and $g$ such that $P(f)\neq P(g)$ (e.g.,
two $Q_{\alpha(i)}$ elements at two different points in
$E_{\alpha(i)}$).

\end{proposition}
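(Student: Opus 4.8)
The plan is to establish that $\Phi$ is a well-defined bijective morphism of varieties, from which "isomorphism" follows since $E_{\alpha(i)}\cong\PP^1$ is a smooth curve and a bijective morphism from a variety onto a smooth curve is an isomorphism (in characteristic zero, or more carefully using that $\Phi$ will be seen to have degree one). First I would check well-definedness: if $f\in J(B^i)\setminus J(B^i+\ee_i)$, i.e.\ $\nu_i(f)=\nu_i(Q_{\alpha(i)})=:\betab_{g+1}^{(i)}$, then by Lemma~\ref{lem12} (after replacing $f$ by a generic element $f'$ with $\w C_{f'}\cap\w C_{Q_{\alpha(i)}}=\emptyset$ and $\nu_i(f')=\nu_i(f)$, if necessary, noting $P(f)$ is defined to handle the non-generic case too) the lifting $\w\varphi=\widetilde{Q_{\alpha(i)}/f}$ restricts to an isomorphism on $E_{\alpha(i)}$, so there is a unique point $P(f)\in E_{\alpha(i)}$ with $\w\varphi(P(f))=\infty$. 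Scaling $f$ by a nonzero constant does not change $\w C_f$ nor the pole locus, so $P$ depends only on the class $\init_{\nu_i}(f)\in\PP D_i(B^i)$; and adding an element of $J(B^i+\ee_i)$ changes $f$ by something of strictly higher $\nu_i$-value, which does not move the pole of $\w\varphi$ on $E_{\alpha(i)}$ (it only affects where $\w C_f$ meets deeper components). This last point is exactly the content of the Remark after Proposition~\ref{lem3}: if $\nu_i(f-cf_0)>\nu_i(f)$ then $\w C_f$ and $\w C_{f_0}$ meet $E_{\alpha(i)}$ at the same point.

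Next I would prove surjectivity and injectivity simultaneously by exhibiting, for each point $P\in E_{\alpha(i)}$, an explicit preimage and checking uniqueness. Pick the element $Q_{\alpha(i)}^P$ through $P$ (a $Q_{\alpha(i)}$-element chosen so that $\w C_{Q_{\alpha(i)}^P}$ meets $E_{\alpha(i)}$ transversally at $P$); then $\nu_i(Q_{\alpha(i)}^P)=\nu_i(Q_{\alpha(i)})$ by definition of $Q_{\alpha(i)}$, so $Q_{\alpha(i)}^P\in J(B^i)$. If $P$ is a smooth point of $E$ and $P\notin\w C_{Q_{\alpha(i)}}$, then $\w\varphi(P)=\infty$ gives $\Phi(\init_{\nu_i}(Q_{\alpha(i)}^P))=P$; the finitely many special points (those on other components of $E$, or the point $\w C_{Q_{\alpha(i)}}\cap E_{\alpha(i)}$) are handled by the definition of $P(f)$ extending to the singular case and by the last sentence of the Remark after Proposition~\ref{lem3}. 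For injectivity, suppose $P(f)=P(g)$ for $f,g\in J(B^i)\setminus J(B^i+\ee_i)$; then the rational function $f/g$ lifts to a map on $E_{\alpha(i)}$ with no zero or pole (its only candidate pole is $P(g)$, its only candidate zero is $P(f)$, and these coincide — one checks the zero of $\w{f/g}$ is $P(f)$ by symmetry, applying Lemma~\ref{lem12} to $g/f$), hence $\w{f/g}|_{E_{\alpha(i)}}$ is constant, $\equiv c\neq 0,\infty$, so $\nu_i(f-cg)>\nu_i(f)$, i.e.\ $\init_{\nu_i}(f)=c\cdot\init_{\nu_i}(g)$ in $D_i(B^i)$, meaning they have the same class in $\PP D_i(B^i)$.

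Once $\Phi$ is a bijection, I would argue it is a morphism: working in a chart of $X$ near a smooth point of $E_{\alpha(i)}$, $P(f)$ is cut out by the vanishing of the leading coefficient of $f\circ\pi$ along $E_{\alpha(i)}$, which depends linearly (projectively) on $\init_{\nu_i}(f)$ once a basis of $D_i(B^i)$ is fixed; concretely, if $\{f_0,f_1\}\subset J(B^i)$ have $P(f_0)\neq P(f_1)$ then for $f=af_0+bf_1$ the point $P(f)$ moves as a linear fractional function of $[a:b]$, so $\Phi$ is a degree-one morphism $\PP D_i(B^i)\to E_{\alpha(i)}\cong\PP^1$, hence an isomorphism, and in particular $\dim\PP D_i(B^i)=1$, i.e.\ $d_i(B^i)=2$, with $\{\init_{\nu_i}(f),\init_{\nu_i}(g)\}$ a basis whenever $P(f)\neq P(g)$ (two $Q_{\alpha(i)}$'s through distinct points of $E_{\alpha(i)}$ being the standard example). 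The main obstacle I anticipate is the bookkeeping at the non-generic configurations — when $\w C_f$ does meet $\w C_{Q_{\alpha(i)}}$, or when $P(f)$ is a singular point of $E$ lying on a connected component $\Delta$ with $\w C_f\cap E_\Delta\neq\emptyset$ — where one must pass to a generic $f'$ or blow up further and invoke Lemma~\ref{lem12}, Proposition~\ref{lem1}, and the Remark after Proposition~\ref{lem3} carefully to see that neither well-definedness nor injectivity is disturbed; the "generic" case itself is essentially immediate from Lemma~\ref{lem12}.
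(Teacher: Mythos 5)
Your proposal is correct and essentially the same as the paper's proof, built on the same pencil machinery (Lemma~\ref{lem12}, Proposition~\ref{lem3} and the remark following it). The small variation is in the injectivity step, where you apply Lemma~\ref{lem12} to $f$ and $g$ separately and then examine $\widetilde{f/g}|_{E_{\alpha(i)}}$, while the paper invokes Proposition~\ref{lem3} directly after an extra blow-up; both are valid, though in the non-generic case $\w{C}_f\cap\w{C}_{Q_{\alpha(i)}}\neq\emptyset$ you only gesture at a reduction that the paper carries out explicitly (an additional blow-up at the common point followed by Proposition~\ref{lem3}, or, in your framing, replacing $Q_{\alpha(i)}$ by another general element whose strict transform avoids $\w{C}_f$ and $\w{C}_g$ — legitimate, since the definition of $P(f)$ does not depend on the chosen $Q_{\alpha(i)}$), and likewise you do not quite exhibit the preimage of a singular point of $E$ in the surjectivity step (one needs an element whose strict transform lands in the adjacent component $\Delta$, e.g.\ the monomial $q_\Delta$ of Proposition~\ref{mainl}, rather than a $Q_{\alpha(i)}^P$).
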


\begin{proof}
First of all, we assert that $\Phi$ is well-defined. In fact,
given $f,g\in J(B^i)\setminus J(B^i+\ee_i)$ such that
$\init_{\nu_i}(f)=\lambda\init_{\nu_i}(g)$, that is,
$\nu_i(f-\lambda g)
> \nu_i(f)=\nu_i(g)=\nu_i(Q_{\alpha(i)})$
 for some $\lambda \in K\setminus\{0\}$, the liftings $\w{\varphi}_1$ and $\w{\varphi}_2$ of
the rational functions $f/Q_{\alpha(i)}$ and $g/Q_{\alpha(i)}$ are
defined in $E_{\alpha(i)}$, hence  the lifting of $(f-\lambda
g)/Q_{\alpha(i)}$ is also defined and it vanishes in
$E_{\alpha(i)}$. This
means that $\w{\varphi}_1=\lambda\w{\varphi}_2$ in $E_{\alpha(i)}$
and so $P(f)=P(g)$.

It is evident that $\Phi$ is surjective, let us see that it is
injective. Take $f,g\in J(B^i)\setminus J(B^i+\ee_i)$ such that
$P(f)=P(g)$. If $\w{C}_f\cap \w{C}_{Q_{\alpha(i)}}=\emptyset$,
then $\w{C}_g\cap \w{C}_{Q_{\alpha(i)}}=\emptyset$, and, perhaps
with some additional blowing-ups, we are in the situation of
Proposition \ref{lem3}, so there exists $\lambda\in
K\setminus\{0\}$ such that $\nu_i(f-\lambda g)>\nu_i(f)$, that is,
$\init_{\nu_i}(f)=\lambda\init_{\nu_i}(g)$ as we want.
Otherwise, $\w{C}_g\cap \w{C}_{Q_{\alpha(i)}}\neq\emptyset$ and
since $\nu_i(f)=\nu_i(g)=\nu_i(Q_{\alpha(i)})$, $f, g$ and
$Q_{\alpha(i)}$ are irreducible, smooth and transversal to $E_{\alpha(i)}$. Making an additional blowing-up
at the point $P=\w{C}_g\cap E_{\alpha(i)}=\w{C}_f\cap E_{\alpha(i)}$, we
can conclude, applying  again Proposition \ref{lem3}, that $\init_{\nu_i}(f)=\lambda\init_{\nu_i}(g)$
for some $\lambda\in K\setminus\{0\}$.
\end{proof}

\begin{proposition}\label{cor10}
The map $\w{\Phi}: \PP D(B^i)\to E_{\alpha(i)}$ which sends the
class of $f$ to $P(f)$, is an isomorphism. In particular $d(B^i)=2$.
\end{proposition}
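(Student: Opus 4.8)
The plan is to compare $\PP D(B^i)$ with $\PP D_i(B^i)$ and then invoke Proposition~\ref{isom}. Recall that the natural homomorphism $D(\mm)\to D_1(\mm)\times\cdots\times D_r(\mm)$ is injective; in particular the projection $p_i\colon D(B^i)\to D_i(B^i)$ sending the class of $f$ modulo $J(B^i+\ee)$ to $\init_{\nu_i}(f)$ is well-defined and injective, hence so is the induced map $\PP p_i\colon\PP D(B^i)\to\PP D_i(B^i)$. Composing with the isomorphism $\Phi$ of Proposition~\ref{isom} we get $\w\Phi=\Phi\circ\PP p_i$, which is automatically well-defined and injective and sends the class of $f$ to $P(f)$. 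So the only thing left is to show that $\w\Phi$ (equivalently $\PP p_i$, equivalently $p_i$) is \emph{surjective}; since $\dim D_i(B^i)=2$ by Proposition~\ref{isom} and $p_i$ is already injective, surjectivity of $p_i$ is the same as $d(B^i)\ge 2$, i.e.\ it suffices to exhibit two $K$-linearly independent elements of $D(B^i)$.

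First I would produce the two candidate elements. By Proposition~\ref{isom} a basis of $D_i(B^i)$ is given by $\init_{\nu_i}(Q)$ and $\init_{\nu_i}(Q')$, where $Q=Q_{\alpha(i)}$ and $Q'$ is a second general element of $\nu_i$ chosen through a point $P'\neq P$ of $E_{\alpha(i)}$ (here $P=\w C_{Q}\cap E_{\alpha(i)}$). I claim the classes of $Q$ and $Q'$ in $D(B^i)$ are linearly independent. Indeed, if $\lambda$ and $\mu$ in $K$, not both zero, satisfied $\lambda Q+\mu Q'\in J(B^i+\ee)$, then in particular $\nu_i(\lambda Q+\mu Q')>\nu_i(Q)=B^i_i$, which forces $\init_{\nu_i}(Q)$ and $\init_{\nu_i}(Q')$ to be proportional in $D_i(B^i)$, contradicting that they form a basis. (One also checks $Q,Q'\in J(B^i)$: by Proposition~\ref{lem1}, since $\nu_i(Q)=\nu_i(Q')=\nu_i(Q_{\alpha(i)})$, one has $\nu_j(Q')\ge\nu_j(Q_{\alpha(i)})=B^i_j$ for all $j$, and likewise trivially for $Q$; so both lie in $J(B^i)$.) Hence $d(B^i)\ge 2$.

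Combining the two halves: $p_i$ is injective with $\dim D_i(B^i)=2$, and $d(B^i)\ge 2$, so $d(B^i)=2$ and $p_i$ is an isomorphism; therefore $\PP p_i$ is an isomorphism and $\w\Phi=\Phi\circ\PP p_i$ is an isomorphism sending the class of $f$ to $P(f)$, as asserted. The one point that needs genuine care — the ``main obstacle'' — is the surjectivity of $p_i$, i.e.\ ruling out $d(B^i)=1$; this is exactly what the linear-independence argument for $Q$ and $Q'$ in $D(B^i)$ achieves, using Proposition~\ref{lem1} to guarantee $Q,Q'\in J(B^i)$ and Proposition~\ref{isom} to guarantee that their $\nu_i$-initial forms are not proportional. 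Everything else is formal manipulation of the filtration.
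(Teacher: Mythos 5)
Your proposal has a genuine gap at the single step that carries all the content.

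You claim that the map $p_i\colon D(B^i)\to D_i(B^i)$ is injective, and you justify this by ``recalling'' that $D(\mm)\to D_1(\mm)\times\cdots\times D_r(\mm)$ is injective. But that implication is false: injectivity of a map into a product says nothing about injectivity of the composite with a single projection. Concretely, $p_i$ is the quotient map $J(B^i)/J(B^i+\ee)\to J(B^i)/J(B^i+\ee_i)$ induced by the inclusion $J(B^i+\ee)\subseteq J(B^i+\ee_i)$; it is automatically \emph{surjective}, with kernel $J(B^i+\ee_i)/J(B^i+\ee)$, and the entire difficulty is to show this kernel is zero, i.e.\ that any $f$ with $\nuv(f)\ge B^i$ and $\nu_i(f)>B^i_i$ in fact has $\nu_j(f)>B^i_j$ for \emph{all} $j$. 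That is precisely the content of the paper's Lemma~\ref{lem13}, and it requires the pencil/dicritical machinery of Section~1 together with the special geometry of $Q_{\alpha(i)}$ (namely, that $\w C_{Q_{\alpha(i)}}$ meets $E$ only on $E_{\alpha(i)}$); it does not fall out of formal filtration bookkeeping.

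Once the injectivity claim is withdrawn, the rest of your argument no longer closes the loop. You treat surjectivity of $p_i$ as ``the only thing left'' and prove $d(B^i)\ge 2$ by exhibiting two independent classes in $D(B^i)$. That step is correct but redundant: since $p_i$ is a surjection onto a $2$-dimensional space, $d(B^i)\ge 2$ is automatic, and it gives no upper bound. From surjectivity and $d(B^i)\ge 2$ you cannot conclude $d(B^i)=2$; you need $d(B^i)\le 2$, which is exactly the missing injectivity. So the proposal, as written, silently assumes the hard half (Lemma~\ref{lem13}) and spends its effort re-deriving the easy half. To repair it you would need to actually prove that $J(B^i+\ee_i)=J(B^i+\ee)$, for which the paper's argument — propagating the vanishing of the lifting of $f/Q_{\alpha(i)}$ across the dual graph and using that $Q_{\alpha(i)}$ meets no other exceptional component — is the natural route.
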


\begin{proof}
The result is a consequence of the Proposition~\ref{isom} and of the next lemma.
\end{proof}

\begin{lemma}\label{lem13}
The natural homomorphism $D(B^i)\to D_i(B^i)$ is an isomorphism.
\end{lemma}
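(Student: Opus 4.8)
The plan is to show that the natural map $D(B^i) \to D_i(B^i)$, which is always injective by the discussion preceding the statement, is in fact surjective. Since $D(B^i) = J(B^i)/J(B^i+\ee)$ injects into $D_1(B^i)\times\cdots\times D_r(B^i)$ via all the $\init_{\nu_j}$ simultaneously, what must be checked is that the single coordinate $\init_{\nu_i}$ already determines the class in $D(B^i)$; equivalently, that if $h\in J(B^i)$ satisfies $\nu_i(h) > B^i_i = \nu_i(Q_{\alpha(i)})$, then automatically $\nu_j(h) > B^i_j$ for every $j$, i.e. $h\in J(B^i+\ee)$. Granting this, the two quotients coincide.

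First I would reduce to a generator: by Proposition~\ref{isom} the space $D_i(B^i)$ is spanned by $\init_{\nu_i}(Q_{\alpha(i)})$ together with $\init_{\nu_i}(Q'_{\alpha(i)})$ for a second $Q_{\alpha(i)}$-element $Q'$ taken through a different point of $E_{\alpha(i)}$, so it suffices to lift each of these two basis vectors to $D(B^i)$. But $Q_{\alpha(i)}$ and $Q'_{\alpha(i)}$ are themselves elements of $R$, and the content is precisely that $\nuv(Q_{\alpha(i)}) = B^i$ and that $Q'_{\alpha(i)}$ also has value exactly $B^i$ in every coordinate; the latter follows because a $Q_{\alpha(i)}$-element is a general element of $\nu_i$, so $\nu_j(Q'_{\alpha(i)})$ depends only on the equisingularity data read off the dual graph, which is the same for $Q_{\alpha(i)}$ and $Q'_{\alpha(i)}$. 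Hence both basis vectors of $D_i(B^i)$ are images of genuine classes in $D(B^i)$, and surjectivity follows. Combined with the injectivity already noted, this gives the isomorphism and, in particular, $d(B^i) = d_i(B^i) = 2$ by Proposition~\ref{isom}.

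Alternatively, and perhaps more cleanly, I would argue directly on an arbitrary $h\in J(B^i)$ with $\nu_i(h)>\nu_i(Q_{\alpha(i)})$ and show $h\in J(B^i+\ee)$: after possibly enlarging the modification, consider the pencil generated by $h$ and $Q_{\alpha(i)}$; since $\nu_i(h)>\nu_i(Q_{\alpha(i)})$, the lifting of $h/Q_{\alpha(i)}$ vanishes on $E_{\alpha(i)}$, and because (by the remark following Proposition~\ref{lem3}, applied at the smooth point $P = \w{C}_{Q_{\alpha(i)}}\cap E_{\alpha(i)}$) this forces $\w C_h$ to meet $E$ only within a single connected component $\Delta$ of ${\cal G}(\pi)\setminus\{\alpha(i)\}$, Proposition~\ref{lem1} (or rather its mechanism) yields $\nu_\gamma(h) > \nu_\gamma(Q_{\alpha(i)})$ for $\gamma$ in that component and $\nu_\gamma(h) = \nu_\gamma(Q_{\alpha(i)})$ otherwise — but we need strict inequality at every $\alpha(j)$. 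This is where the pencil geometry must be pushed a little harder: one observes that the vanishing of $h/Q_{\alpha(i)}$ propagates from $E_{\alpha(i)}$ across every divisor to which the strict transform $\w C_h$ is \emph{not} attached, so in fact $\nu_\gamma(h) > \nu_\gamma(Q_{\alpha(i)})$ holds on the entire complement of the single branch-carrying component, and since at most one $\alpha(j)$ can lie in that component while still — by generality of the $Q_{\alpha(j)}$ — forcing $\nu_j$ up there too, one gets $\nu_j(h) \ge \nu_j(Q_{\alpha(i)}) + 1 = B^i_j + 1$ for all $j$.

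The main obstacle is exactly the last point: ruling out the possibility that raising $\nu_i(h)$ above $\nu_i(Q_{\alpha(i)})$ leaves some other coordinate $\nu_j(h)$ stuck at $B^i_j$. I expect this to dissolve once one uses that $Q_{\alpha(i)}$ is a \emph{general} element — so its strict transform meets $E$ transversely at a smooth point of $E_{\alpha(i)}$, outside every star vertex — which means the connected component of ${\cal G}(\pi)\setminus\{\alpha(i)\}$ into which $\w C_h$ can slip is the one \emph{not} containing any other $\alpha(j)$ (or, in the degenerate case where it does contain some $\alpha(j)$, the vanishing of the lifted function on that whole component still raises $\nu_j$). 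Everything else is a routine invocation of Lemma~\ref{lem12}, Proposition~\ref{lem1}, Proposition~\ref{lem3} and the remark following it.
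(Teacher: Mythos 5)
Your opening has the two directions swapped: the map $D(B^i)\to D_i(B^i)$ is the quotient map induced by the inclusion $J(B^i+\ee)\subseteq J(B^i+\ee_i)$, hence it is automatically \emph{surjective}; the content of the lemma is \emph{injectivity}. (The injectivity recorded just before the statement concerns the full product $D(\mm)\to D_1(\mm)\times\cdots\times D_r(\mm)$, a different map.) Because of this, your second paragraph proves nothing: lifting the two basis vectors $\init_{\nu_i}(Q_{\alpha(i)})$ and $\init_{\nu_i}(Q'_{\alpha(i)})$ back to $D(B^i)$ only re-establishes the automatic surjectivity, and in no way excludes $\dim D(B^i)>2$. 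You do state the right goal in the middle of the first paragraph --- $h\in J(B^i)$ with $\nu_i(h)>B^i_i$ forces $h\in J(B^i+\ee)$ --- so the reduction is correct, but the second paragraph does not advance it.

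Your third paragraph heads the right way but cannot lean on Lemma~\ref{lem12}, Proposition~\ref{lem1}, Proposition~\ref{lem3} or the remark following it: all of those require \emph{equality} $\nu_\alpha(h)=\nu_\alpha(Q_\alpha)$ at the distinguished vertex, whereas here the hypothesis is strict, $\nu_i(h)>\nu_i(Q_{\alpha(i)})$. You also tacitly treat $h$ as irreducible (``the single branch-carrying component''), which is not assumed. The paper runs a fresh pencil argument. Let $\Delta$ be the maximal connected subset of ${\cal G}(\pi)$ containing $\alpha(i)$ with $\nu_\beta(h)>\nu_\beta(Q_{\alpha(i)})$ for all $\beta\in\Delta$; the lifting $\w{\varphi}$ of $h/Q_{\alpha(i)}$ is identically $0$ on $E_\Delta$. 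If $\Delta\neq{\cal G}(\pi)$, after finitely many additional blow-ups one may find a divisor $E_\beta$ meeting $E_\Delta$ with $\nu_\beta(h)=\nu_\beta(Q_{\alpha(i)})$; then $\w{\varphi}|_{E_\beta}$ is defined and non-constant (constancy would force the value $0$ by continuity, putting $\beta\in\Delta$), so $E_\beta$ is dicritical for the pencil $\{\lambda h+\mu Q_{\alpha(i)}\}$. Some $P\in E_\beta$ has $\w{\varphi}(P)=\infty$, and this $P$ is not $E_\beta\cap E_\Delta$; it follows that the strict transform of $C_{Q_{\alpha(i)}}$ meets $E_\beta$ at $P$ or meets a component on the far side of $\beta$ from $\Delta$, contradicting the fact that it meets $E$ only at $E_{\alpha(i)}\subset E_\Delta$. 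Hence $\Delta={\cal G}(\pi)$ and, in particular, $\nu_j(h)>B^i_j$ for all $j$. The dicritical-divisor contradiction is what turns the ``propagation'' you gesture at into a proof; as written, that step is a genuine gap, and your final paragraph acknowledges as much.
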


\begin{proof}
Let $f\in R$ be such that $\nu_j(f)\ge B^i_j=\nu_j(Q_{\alpha(i)})$
for every $j\in\{1,\ldots, r\}$ and $\nu_i(f)>
B^i_i=\nu_i(Q_{\alpha(i)})$. We need to prove  that $\nu_j(f) >
B^i_j$ for any $j$.

Denote by $\Delta$ the maximal connected subset of $\mathcal
G(\pi)$ such that $\alpha(i)\in \Delta$ and
$\nu_{\beta}(f)>\nu_{\beta}(Q_{\alpha(i)})$ for every $\beta\in
\Delta$. Notice that the lifting $\w{\varphi}$ of the function
$\varphi=f/Q_{\alpha(i)}$ is defined and it is identically $0$ in
$E_{\Delta}$, in particular $E_{\beta}$ is not dicritical for the
pencil $L=\{\lambda f+ \mu Q_{\alpha(i)}\mid \lambda,\mu\in K\}$
for any $\beta\in \Delta$. Let us see that $\Delta=\mathcal
G(\pi)$, which proves the lemma.

Otherwise, we could choose a divisor $E_{\beta}$ such that
$E_{\beta}\cap E_{\Delta}\neq\emptyset$ and $\beta\notin \Delta$,
that is, $\nu_{\beta}(f)\le \nu_{\beta}(Q_{\alpha(i)})$. By making
some additional blowing-ups, we can suppose that in fact
$\nu_{\beta}(f)= \nu_{\beta}(Q_{\alpha(i)})$, then $\w{\varphi}$
is defined and it is not constant in $E_{\beta}$, so it is
dicritical for $L$. Hence, there exists a point $P\in E_{\beta}$,
$P\neq E_{\beta}\cap E_{\Delta}$, such that
$\w{\varphi}(P)=\infty$, and this means that $Q_{\alpha(i)}$ meets
either $E_{\beta}$ at $P$ or $E_{\Delta'}$, $\Delta'$ being the
connected component  of $P$ in $\mathcal
G(\pi)\setminus\{\beta\}$. But both things are impossible, as
$Q_{\alpha(i)}$ only meets $E$ at $E_{\alpha(i)}$, and
$\alpha(i)\in \Delta\subset \mathcal G(\pi)\setminus \Delta'$.
\end{proof}

\medskip

The following two lemmas are devoted to prove Theorem
\ref{str_semi} which gives an  explicit description of the
semigroup $S_V$ and clarifies the special role of the elements
$B^1, \ldots, B^r$. Fix $\mm\in \ZZ^r$ and $i$ such that $1\le
i\le r$.

\begin{lemma}\label{lem5}
$d_i(\mm)\ge 2$ if and only if $d_i(\mm-B^i)\ge 1$. Moreover, if
$\mm \in S_V$, then $d_i(\mm)\ge 2$ if and only if $\mm-B^i\in
S_V$.
\end{lemma}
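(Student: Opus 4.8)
The plan is to interpret both statements geometrically via the multiplication-by-$Q_{\alpha(i)}$ map on the graded pieces, using the isomorphism of Proposition~\ref{isom}. First I would observe that multiplication by any fixed general element $h$ with $\nuv(h)=B^i$ induces a $K$-linear map $\mu_h\colon D_i(\mm-B^i)\to D_i(\mm)$ sending $\init_{\nu_i}(g)$ to $\init_{\nu_i}(gh)$; this is well-defined because $\nu_i(gh)=\nu_i(g)+\nu_i(h)$ and $\nu_j(gh)\ge \nu_j(g)+B^i_j$ for all $j$, and it is injective since $R$ is a domain and $\nu_i$ is a valuation (if $\nu_i(gh)>\nu_i(g)+\nu_i(h)$ then $\nu_i(g)>\ldots$, a contradiction). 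Hence $d_i(\mm-B^i)\ge 1$ forces $d_i(\mm)\ge 1$; the content of the first claim is the sharper inequality, namely that the image of $\mu_h$ is a \emph{proper} subspace of $D_i(\mm)$ exactly when $D_i(\mm)\ne 0$, while conversely $d_i(\mm)\ge 2$ produces a nonzero element of $D_i(\mm-B^i)$.

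For the direction ``$d_i(\mm-B^i)\ge 1 \Rightarrow d_i(\mm)\ge 2$'', I would take $g$ with $\nuv(g)\ge \mm-B^i$ and $\nu_i(g)=m_i-B^i_i$, and then pick \emph{two} elements $h_1,h_2$ both with $\nuv(h_\ell)=B^i$ but chosen (as in the last sentence of Proposition~\ref{isom}) so that $P(h_1)\ne P(h_2)$ in $E_{\alpha(i)}$. The products $gh_1,gh_2$ both lie in $J(\mm)\setminus J(\mm+\ee_i)$, and I must show $\init_{\nu_i}(gh_1)$ and $\init_{\nu_i}(gh_2)$ are linearly independent in $D_i(\mm)$. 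If they were dependent, then $\nu_i(gh_1-\lambda gh_2)=\nu_i(g(h_1-\lambda h_2))>\nu_i(g)+\nu_i(h_1)$, forcing $\nu_i(h_1-\lambda h_2)>B^i_i$, i.e. $\init_{\nu_i}(h_1)=\lambda\,\init_{\nu_i}(h_2)$ in $D_i(B^i)$ — contradicting $P(h_1)\ne P(h_2)$ by Proposition~\ref{isom}. For the converse, if $d_i(\mm)\ge 2$ choose $f_1,f_2\in J(\mm)\setminus J(\mm+\ee_i)$ with independent initial forms and with $\w C_{f_1}\cap\w C_{f_2}=\emptyset$ (a genericity choice); then the lifting of $f_1/f_2$ is defined along $E_{\alpha(i)}$ and restricts to a nonconstant map, so $E_{\alpha(i)}$ is dicritical for the pencil $\{\lambda f_1+\mu f_2\}$ and by the dicriticity results quoted before Lemma~\ref{lem12} some fibre $f_1-cf_2$ has strict transform passing through $\w C_{Q_{\alpha(i)}}\cap E_{\alpha(i)}$; comparing with $Q_{\alpha(i)}$ via Proposition~\ref{lem3} shows that a suitable combination $f_1-cf_2$ is divisible (after passing to values) by $Q_{\alpha(i)}$, producing an element of $D_i(\mm-B^i)$, so $d_i(\mm-B^i)\ge 1$.

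For the second assertion, the implication $\mm-B^i\in S_V\Rightarrow d_i(\mm)\ge 2$ is immediate from the first part once we note $\mm-B^i\in S_V$ gives $d_i(\mm-B^i)\ge 1$. The reverse implication $d_i(\mm)\ge 2,\ \mm\in S_V\Rightarrow \mm-B^i\in S_V$ is the delicate one: from the first part we get $d_i(\mm-B^i)\ge 1$, i.e. some $g\in R$ has $\nu_i(g)=m_i-B^i_i$ and $\nu_j(g)\ge m_j-B^i_j$ for all $j$, but we need $\nuv(g')=\mm-B^i$ \emph{exactly} for some $g'$. I would argue by descending through the coordinates $j\ne i$ where $\nu_j(g)>m_j-B^i_j$: fix such a $j$; since $\mm\in S_V$ there is $f$ with $\nuv(f)=\mm$, and $\w C_f$ lies in some connected component of $\mathcal G(\pi)$ relative to the relevant vertex, so using Proposition~\ref{mainl}/Proposition~\ref{lem1} one can multiply $g$ by an appropriate monomial $q_\Delta$ (or a power of some $Q_\rho$) that raises the other coordinates without overshooting, while an argument comparing $gq_\Delta$ with $f$ lets one correct the $j$-th coordinate downward by replacing $g$ with a $K$-linear combination $g - c\,(\text{monomial})$ whose $j$-value drops to exactly $m_j-B^i_j$. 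Iterating over the finitely many offending coordinates yields $g'$ with $\nuv(g')=\mm-B^i$.

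\medskip
The main obstacle I anticipate is this last ``exact realizability'' step: going from the dimension inequality $d_i(\mm-B^i)\ge 1$ (which only says $\mm-B^i$ is dominated by an element of $S_V$ in the $i$-th coordinate) to the membership $\mm-B^i\in S_V$. It requires the combinatorial bookkeeping of the dual graph — identifying which connected component each strict transform meets and choosing the correct monomials from Proposition~\ref{mainl} — together with repeated use of Proposition~\ref{lem3} to cancel leading terms coordinate by coordinate, and one must check the induction terminates (the number of coordinates where $g$ overshoots strictly decreases). Everything else is linear algebra plus the already-established geometric dictionary of Propositions~\ref{isom}, \ref{cor10} and the dicriticity lemmas.
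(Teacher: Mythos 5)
Your treatment of the first assertion is essentially the paper's. The implication $d_i(\mm-B^i)\ge 1\Rightarrow d_i(\mm)\ge 2$ via multiplication by two elements $h_1,h_2$ with $\nuv(h_\ell)=B^i$ and $P(h_1)\ne P(h_2)$ (so that their initial forms span $D_i(B^i)$, by Proposition~\ref{isom}) is exactly the argument in the paper. For the converse, your geometric picture --- the pencil generated by $h_1,h_2$ is dicritical at $E_{\alpha(i)}$, so its lifting to $E_{\alpha(i)}$ is a surjective $s$-to-$1$ map to $\PP^1$, and a suitable fiber meets $E_{\alpha(i)}$ at smooth points --- is also the paper's; but the phrase ``a suitable combination $f_1-cf_2$ is divisible (after passing to values) by $Q_{\alpha(i)}$'' is the wrong formulation. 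What the paper actually does is factor a \emph{generic} fiber $h=\lambda h_1+\mu h_2$ in $R$ as $h = h'\prod_{l=1}^s g_l$, the $g_l$ being distinct irreducible components whose strict transforms are smooth and transversal to $E_{\alpha(i)}$ at smooth points; each $g_l$ has $\nuv(g_l)=B^i$ (it is itself a $Q_{\alpha(i)}$), and the quotient $h/g_l$ --- not $h/Q_{\alpha(i)}$ --- is the witness for $d_i(\mm-B^i)\ge 1$.

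The second assertion is where your proposal has a real gap and departs from the paper. You correctly observe that $d_i(\mm-B^i)\ge 1$ alone only controls the $i$-th coordinate, and you propose to repair the remaining coordinates by an iterative correction: multiply the witness $g$ by well-chosen monomials $q_\Delta$ and subtract constants so that $\nu_j$ drops to exactly $m_j-B^i_j$, coordinate by coordinate. As written this is not convincing: (i) finding a monomial $q$ with $\nu_j(q)=m_j-B^i_j$ while $\nu_i(q)>m_i-B^i_i$ and $\nu_k(q)\ge m_k-B^i_k$ for all other $k$ is not granted by Propositions~\ref{mainl} or \ref{mainp} --- those produce monomials whose values are pinned to the value vector of a specific element of $R$, and assuming such an element exists is close to assuming $\mm-B^i\in S_V$, which is what you want to prove; (ii) Proposition~\ref{lem3} subtracts to \emph{raise} one valuation, not to lower it, so it cannot drive a coordinate down in the way your sketch suggests. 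The paper sidesteps the entire issue by building the exact realizability into the converse construction: since $\mm\in S_V$, choose $h_2$ in the independent pair with $\nuv(h_2)=\mm$ exactly; then for generic $\lambda,\mu$, since $\nu_j(h_1)\ge m_j=\nu_j(h_2)$ for every $j$, the fiber $h=\lambda h_1+\mu h_2$ also satisfies $\nuv(h)=\mm$, and therefore $\nuv(h/g_l)=\nuv(h)-\nuv(g_l)=\mm-B^i$ exactly, giving $\mm-B^i\in S_V$ directly. No separate correction is needed; the ``delicate'' step you anticipated is absorbed into the choice of $h_2$ at the outset.
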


\begin{proof}
If $d_i(\mm-B^i)\ge 1$, take $h\in J(\mm-B^i)\setminus J(\mm-B^i+\ee_i)$ and choose a
basis $\{\init_{\nu_i}(h_1), \init_{\nu_i}(h_2)\}$ of $D_i(B^i)$. Then
$\init_{\nu_i}(h\,h_1), \init_{\nu_i}(h\,h_2)$ are  linearly independent vectors in
$D_i(\mm)$.

Conversely, pick $h_1, h_2\in J(\mm)\setminus J(\mm+\ee_i)$ whose
classes in $D_i(\mm)$ are linearly independent. Every nonzero
function of the pencil $L$ generated by $h_1$ and $h_2$, $L =
\{\lambda h_1 + \mu h_2 \tq \lambda, \mu\in K\}$, satisfies
$\nu_i(\lambda h_1 + \mu h_2) =m_i$, so $E_{\alpha(i)}$ is
dicritical for $L$. Therefore, the restriction to $E_{\alpha(i)}$ of
the lifting to $X$, $\w{\varphi}$, of the rational function
$\varphi=h_1/h_2$ defines a $s$ to $1$ surjective morphism from
$E_{\alpha(i)}$ onto $\PP^1_K$. Then, a generic fiber $h = \lambda
h_1+\mu h_2$ of $L$ can be factorized in $R$ as $h = h'
\prod_{l=1}^s g_l $, where the $g_l$ are irreducible, $g_l\neq g_j$
when $l \neq j$ and the strict transform of each curve $C_{g_l}$ is
smooth and transversal to $E_{\alpha(i)}$ in a smooth point.
Therefore $\nuv(g_l)=B^i$ and  $h/g_i\in J(\mm-B^i)$ but
$h/g_i\notin J(\mm-B^i+\ee_i)$.

Moreover, if $\mm\in S_V$ then $h_2$ can be chosen in such a way that $\nuv(h_2)=\mm$ and
so for $\lambda$ and $\mu$ generic we have $\nuv(h)=\mm$ and  $\nuv(h/g_i) = \mm-B^i\in
S_V$.
\end{proof}

\begin{lemma}\label{lem6}
\
\begin{enumerate}
\item
If $\mm\in S_V$ and $j\neq i$
then
$d_i(\mm+B^j) = d_i(\mm)$.
\item
If $d_i(\mm) \neq 0$ then $d_i(\mm+B^i) = 1 + d_i(\mm)$.
\end{enumerate}
\end{lemma}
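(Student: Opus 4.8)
The plan is to exploit the multiplication maps by an element realizing the value $B^j$ (resp. $B^i$) together with the geometry of the dicritical divisor $E_{\alpha(i)}$, as encoded in Proposition~\ref{isom} and Proposition~\ref{lem1}. For part~(1), fix $g\in R$ with $\nuv(g)=B^j$; since $j\neq i$, we have $g\in J(B^j)$ and, crucially, $\nu_i(g)=B^j_i$ does \emph{not} force $g$ into $J(B^j+\ee_i)$ only when $B^j_i$ is minimal along $E_{\alpha(i)}$. The key point is that multiplication by $g$ induces a well-defined $K$-linear map $\mu_g\colon D_i(\mm)\to D_i(\mm+B^j)$, sending $\init_{\nu_i}(h)$ to $\init_{\nu_i}(hg)$; I would first check this is well-defined (if $\nu_i(h-\lambda h')>\nu_i(h)$ then $\nu_i((h-\lambda h')g)>\nu_i(hg)$, using that $\nu_i$ is a valuation). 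To see $\mu_g$ is injective, suppose $\nu_i(hg)>\mm_i+B^j_i$ while $\nu_i(h)=\mm_i$: then the lifting $\w\varphi$ of $hg/Q_{\alpha(i)}^{?}$ — more precisely, I would argue that $\init_{\nu_i}(hg)=\init_{\nu_i}(h)\cdot\init_{\nu_i}(g)$ in the graded ring $\gr_{\nu_i}R$, which is a \emph{domain} (it is the associated graded of a valuation ring), so a product of nonzero initial forms is nonzero. For surjectivity, given $k\in J(\mm+B^j)\setminus J(\mm+B^j+\ee_i)$, since $\mm\in S_V$ I would pick $h_0$ with $\nuv(h_0)=\mm$ and then, using Proposition~\ref{isom} applied to the component $\nu_j$ to choose $g$ so that $P_j(g)$ avoids the relevant point, write $k$ modulo $J(\mm+B^j+\ee_i)$ as $h_0\cdot g'$ with $\nuv(g')=B^j$; the subtlety is that one must control the $\nu_i$-value, which is handled because the hypothesis $\mm\in S_V$ lets us also vary within a pencil as in the proof of Lemma~\ref{lem5}.

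For part~(2), the inequality $d_i(\mm+B^i)\le 1+d_i(\mm)$ is essentially Lemma~\ref{lem5} read backwards: if $d_i(\mm+B^i)\ge 2$ then $d_i(\mm)\ge 1$, and more precisely the map $\mu_g\colon D_i(\mm)\to D_i(\mm+B^i)$ for $g$ a general element with $\nuv(g)=B^i$ is injective (same domain argument as above), so $d_i(\mm+B^i)\ge d_i(\mm)$; the remaining content is that the cokernel is exactly one-dimensional when $d_i(\mm)\neq 0$. For the lower bound $d_i(\mm+B^i)\ge 1+d_i(\mm)$: take a basis $\init_{\nu_i}(h_1),\dots,\init_{\nu_i}(h_d)$ of $D_i(\mm)$ and a single element $h_0$ with $\nu_i(h_0)=\mm_i$ (exists since $d_i(\mm)\neq0$); fix two elements $g_1,g_2$ with $\nuv(g_1)=\nuv(g_2)=B^i$ and $P(g_1)\neq P(g_2)$ as in Proposition~\ref{isom}. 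Then I claim $\{\init_{\nu_i}(h_1 g_1),\dots,\init_{\nu_i}(h_d g_1),\ \init_{\nu_i}(h_0 g_2)\}$ are linearly independent in $D_i(\mm+B^i)$: the first $d$ are independent because $\mu_{g_1}$ is injective, and $\init_{\nu_i}(h_0 g_2)$ is not in their span because in the factorization $\gr_{\nu_i}R$-picture it corresponds to the point $P(g_2)$ on $E_{\alpha(i)}$ while the others all correspond (after factoring out $\init(g_1)$) to $P(g_1)$; concretely, a linear relation would force $\init(g_2)$ to be a $D_i(\mm)$-combination of $\init(g_1)$, contradicting $d_i(B^i)=2$ together with $P(g_1)\ne P(g_2)$ via the isomorphism $\Phi$ of Proposition~\ref{isom}.

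For the upper bound $d_i(\mm+B^i)\le 1+d_i(\mm)$ I would show $\operatorname{coker}\mu_{g_1}$ is at most one-dimensional. Given $k_1,k_2\in J(\mm+B^i)\setminus J(\mm+B^i+\ee_i)$, consider their images $\init_{\nu_i}(k_1),\init_{\nu_i}(k_2)\in D_i(\mm+B^i)$; projecting via $\Phi$-type reasoning each determines, up to the ambiguity of $D_i(\mm)$, a point of $E_{\alpha(i)}$ (its "$P$-value" relative to $g_1$), but $E_{\alpha(i)}\cong\PP^1$ and $D_i(B^i)$ is $2$-dimensional, so any three classes become dependent modulo the image of $\mu_{g_1}$ — more carefully, one argues that a suitable $K$-combination $\lambda k_1+\mu k_2$ has strictly larger $\nu_i$-value, i.e.\ lies in $J(\mm+B^i+\ee_i)+\im\mu_{g_1}$, by running the pencil/dicritical argument of Lemma~\ref{lem5} on $L=\{\lambda k_1+\mu k_2\}$: either $E_{\alpha(i)}$ is not dicritical for $L$, giving a relation directly, or it is, in which case a generic fiber factors through a $g_1$-type element and lands in $\im\mu_{g_1}$. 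The main obstacle is precisely this last step — cleanly matching the pencil-theoretic argument (which naturally produces a \emph{generic} fiber with a prescribed value) with the linear-algebra statement about the cokernel being exactly one-dimensional; the technical care needed is to ensure that "generic $\lambda,\mu$" can be replaced by "all but finitely many", and to handle the boundary case where $\w{C}_{k_j}$ meets $\w{C}_{Q_{\alpha(i)}}$ by an extra blow-up exactly as in the proof of Proposition~\ref{isom}.
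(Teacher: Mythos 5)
Your overall strategy (multiplication maps, pencils, the dicritical criterion, Proposition~\ref{isom}) is in the same spirit as the paper's, but several steps do not close.

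\textbf{Part (2), upper bound: a genuine gap.} You try to prove $d_i(\mm+B^i)\le 1+d_i(\mm)$ by showing $\operatorname{coker}\mu_{g_1}$ has dimension at most~$1$, running the pencil argument on $L=\{\lambda k_1+\mu k_2\}$. The dicritical branch of your dichotomy fails: for generic $(\lambda,\mu)$ the fiber factors through an irreducible $g'$ whose strict transform meets $E_{\alpha(i)}$ transversally at a \emph{generic} point $P'$, and by Proposition~\ref{isom} the point $P'$ determines $\init_{\nu_i}(g')$ up to scalar. Generically $P'\neq P(g_1)$, so $\init_{\nu_i}(g')$ is \emph{not} a scalar multiple of $\init_{\nu_i}(g_1)$, and hence $\init_{\nu_i}(g'h'')$ does \emph{not} lie in $\im\mu_{g_1}$. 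The paper avoids this by never arguing about a cokernel: given a basis $g_1,\dots,g_t$ of $D_i(\mm+B^i)$ it fixes one generic smooth point $P\in E_{\alpha(i)}$ that is simultaneously noncritical for all the pencils $L_k=\{\lambda g_1+\mu g_k\}$, $k=2,\dots,t$, takes in each $L_k$ the fiber $\varphi_kg'_k$ through $P$ (so the $\varphi_k$ all have the same initial form), and shows directly that $\init_{\nu_i}(g'_2),\dots,\init_{\nu_i}(g'_t)$ are linearly independent in $D_i(\mm)$. The simultaneous choice of one $P$ for all $k$ is essential; no single auxiliary $g_1$ with $\nuv(g_1)=B^i$ will play that role.

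\textbf{Part (2), lower bound: incomplete.} To make the remark after Proposition~\ref{lem3} give a contradiction from a relation, you need the meeting point $P(g_1)$ to be avoided by \emph{every} branch of $\w{C}_{h_0g_2}$, i.e.\ you need both $P(g_1)\neq P(g_2)$ \emph{and} $\w{C}_{g_1}\cap\w{C}_{h_0}=\emptyset$. You impose only the first. The paper takes $h_0=h_s$ (the last basis element) and chooses $g$ with $\w{C}_g\cap\w{C}_{Q_{\alpha(i)}}=\emptyset$ and $\w{C}_g\cap\w{C}_{h_s}=\emptyset$, precisely to ensure this. Also, your final step --- that a relation would force $\init(g_2)$ to be a ``$D_i(\mm)$-combination of $\init(g_1)$'' --- implicitly cancels a factor in $\gr_{\nu_i}R$, which is a domain but not visibly a UFD, so this is not a legitimate move as stated.

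\textbf{Part (1): too vague to assess.} Injectivity via the domain property of $\gr_{\nu_i}R$ is fine, and matches the paper ($\psi$ is ``clearly injective''). For surjectivity you gesture at choosing $g$ with $P_j(g)$ avoiding ``the relevant point'' and ``varying within a pencil.'' The paper's proof is a concrete two-case analysis for the pencil generated by $h$ and $fQ_{\alpha(j)}$: either $\nu_j(h-\lambda fQ_{\alpha(j)})>\nu_j(fQ_{\alpha(j)})$ for some $\lambda$ (and then one trades an irreducible factor $g$ of $h$ meeting $E_{\alpha(j)}$ at the same point as $Q_{\alpha(j)}$ for $Q_{\alpha(j)}^c$), or $E_{\alpha(j)}$ is dicritical (and then a generic fiber yields a $Q_{\alpha(j)}$-type factor $g$ whose $\nu_i$-initial form is proportional to that of $Q_{\alpha(j)}$ by Proposition~\ref{lem3}, since $i\neq j$). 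You should spell out the analogue of that case split rather than appealing to ``the proof of Lemma~\ref{lem5}.''
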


\begin{proof}
First, we will prove that if $j \neq i$ and $\mm \in S_V$ then the
multiplication by $Q_{\alpha(j)}$ provides  a linear bijective map
$\psi:D_i(\mm)\to D_i(\mm+B^j)$. Clearly it is injective, let us see
that it is also surjective. Pick an element $f \in R$ such that
$\nuv(f)=\mm$ and take $h\in J(\mm+B^j)\setminus J(\mm+B^j+\ee_i)$.
Notice that $\nu_j(h)\geq \nu_j(fQ_{\alpha(j)})$ and so $\nu_j(h -
\lambda fQ_{\alpha(j)})\geq \nu_j(f Q_{\alpha(j)})$ for $\lambda\in
K$.

If $\nu_j(h - \lambda fQ_{\alpha(j)})>\nu_j(f Q_{\alpha(j)})$, for
some $\lambda \in K $, then there exists an irreducible component
$g$ of $h$ such that the strict transforms  of  $C_{g}$ and
$C_{Q_{\alpha(j)}}$ by the minimal resolution of $V$ intersect
$E_{\alpha(j)}$ at the same point, and then $\init_{\nu_j}(g)= b
\cdot\init_{\nu_j}(Q_{\alpha(j)})^c$ for some $c\ge 1$ and $b\in
K\setminus \{0\}$ (see the remark after Proposition \ref{lem3}).
Thus $h' = bhQ_{\alpha(j)}^{c}/g$ and $h$ have the same value and
initial form with respect to $\nu_k$ for $1 \leq k \leq r$. In
particular, $\init_{\nu_i}(h)=\init_{\nu_i}(h')\in \im \psi$.

Otherwise, $\nu_j(h - \lambda f Q_{\alpha(j)}) = \nu_j(f Q_{\alpha(j)})$ for all
$\lambda\in K$ and then $E_{\alpha(j)}$ is a dicritical divisor of the pencil generated
by $h$ and $f Q_{\alpha(j)}$. Thus, for a generic $\lambda$, $h - \lambda f
Q_{\alpha(j)}$ has an irreducible component $g$ such that $\w{C}_{g}$ is smooth and
transversal to $E_{\alpha(j)}$ at a smooth point. As $i\neq j$, by Proposition
\ref{lem3},  there exists $b\in K\setminus\{0\}$ such that $\init_{\nu_i}(g) = b \,
\init_{\nu_i}(Q_{\alpha(j)})$. Then  $h'=(h - \lambda f Q_{\alpha(j)})/g\in
J(\mm)\setminus J(\mm+\ee_i)$ and $\init_{\nu_i}(g\, h') = \init_{\nu_i}(b\,
Q_{\alpha(j)} h') \in \im \psi$. Hence $\init_{\nu_i}(h) = \lambda\init_{\nu_i}(f
Q_{\alpha(j)})+ \init_{\nu_i}(g\, h') \in\im \psi$.

\medskip

Now, we will prove 2. Assume $j=i$ and pick elements $\seq h1s \in
J(\mm)\setminus J(\mm+\ee_i)$  such that the set
$\{\init_{\nu_i}(h_l) | 1\le l\le s\}$ is a basis of $D_i(\mm)$.
Take  an irreducible element $g\in R$  such that  $\w{C}_{g}$ is
smooth and transversal to $E_{\alpha(i)}$ at a smooth point $P$,
$\w{C}_{g}\cap\w{C}_{Q_{\alpha(i)}}=\emptyset$ and
$\w{C}_{g}\cap\w{C}_{h_s}=\emptyset$. Then
$\init_{\nu_i}h_1g,\ldots,
\init_{\nu_i}h_sg,\init_{\nu_i}h_sQ_{\alpha(i)}$ are linearly
independent in the vector space $D_i(\mm+B^i)$, because in other case we could find $h=\sum\lambda_ih_i\in
J(\mm)\setminus J(\mm+\ee_i)$ and $\lambda\neq 0$ with $\nu_i(hg-\lambda h_sQ_{\alpha(i)})>\nu_i(hg)$ and then $\w{C}_{h_sQ_{\alpha(i)}}$ must intersect $E_{\alpha(i)}$ at the point $P$ (see again the remark
after Proposition \ref{lem3}), in contradiction with the election of $g$. Hence,
$d_i(\mm+B^i)\geq d_i(\mm)+1$.

To finish the proof, it suffices to show that if $d_i(\mm+B^i) =
t\ge 2$ then $d_i(\mm)\ge t-1$.  In fact, let $\{\init_{\nu_i}g_1,
\ldots, \init_{\nu_i}g_t\}$ be a basis of $D_i(\mm+B^i)$ and
consider the family of pencils $L_k = \{\lambda g_1 + \mu g_k\}$,
$2 \leq k \leq t$. Fix a smooth point $P\in E_{\alpha(i)}$ in such
a way that $P$ is non-critical for all the pencils $L_k$. For each
$k=2,\ldots, t$, let $\lambda g_1+\mu g_k = \varphi_k g_k'$ be the
fiber of $L_k$ corresponding to $P$ and $\varphi_k$ the unique
irreducible component of such fiber by $P$. In this way, all the
initial forms of $\varphi_k$ are equal (up to product by
constants). Set ${\cal B}=\{g_1, \varphi_2 g'_2, \ldots, \varphi_t
g'_t\}$. Then, for generic $P$, $\init_{\nu_i}(\cal{B})$ is  a
basis of $D_i(\mm+B^i)$, and $\init_{\nu_i}(g'_2), \ldots,
\init_{\nu_i}(g'_t)\in D_i(\mm)$ are linearly independent
elements. Thus $d_i(\mm)\ge t-1$ and the proof is finished.
\end{proof}

\begin{theorem}\label{str_semi}
For any  $\mm\in S_V$ there exist unique $\seq a1r\in \ZZ_{\ge 0}$
and $\nn\in S_V$ such that
\begin{enumerate}
\item
$\mm = \nn + a_1 B^1 + \cdots + a_r B^r$.
\item
$d_i(\nn)=1$ for every $i=1,\ldots,r$.
\end{enumerate}
In fact $a_i = \max \{k \in \ZZ_{\geq 0}
\mid  \mm - k B^i \in S_V\} = d_i(\mm)-1$  for $i=1,\ldots,r$.
\end{theorem}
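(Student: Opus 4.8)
The plan is to use Lemmas \ref{lem5} and \ref{lem6} to pin down the coefficients $a_i$ iteratively, and then verify that subtracting $\sum a_i B^i$ lands us in the desired element $\nn$. First I would show that the $a_i$, if they exist, are forced: suppose $\mm = \nn + a_1B^1 + \cdots + a_rB^r$ with $d_i(\nn) = 1$ for all $i$. Applying part (1) of Lemma \ref{lem6} repeatedly (moving the $B^j$ with $j\neq i$ across, which does not change $d_i$) and then part (2) (each $B^i$ added raises $d_i$ by exactly one, as long as $d_i\neq 0$, which holds since $d_i(\nn)=1$), we get $d_i(\mm) = d_i(\nn) + a_i = 1 + a_i$. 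Hence $a_i = d_i(\mm) - 1$ is uniquely determined, and then $\nn = \mm - \sum_i a_i B^i$ is unique too. This also matches the claimed formula $a_i = d_i(\mm)-1$; the identification with $\max\{k \mid \mm - kB^i\in S_V\}$ will come from Lemma \ref{lem5}.

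Next I would prove existence. Set $a_i := d_i(\mm)-1$ for $1\le i \le r$ (note $d_i(\mm)\ge 1$ because $\mm\in S_V$ means $J(\mm)\neq J(\mm+\ee_i)$), and set $\nn := \mm - \sum_{i=1}^r a_i B^i$. I need two things: that $\nn\in S_V$, and that $d_i(\nn)=1$ for every $i$. For membership in $S_V$, I would peel off the $B^i$ one at a time using the second assertion of Lemma \ref{lem5}: since $d_i(\mm)\ge 2$ iff $\mm - B^i\in S_V$ (for $\mm\in S_V$), and iterating, as long as $d_i$ of the current element is $\ge 2$ we may subtract another $B^i$ and stay in $S_V$. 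The subtlety is that subtracting $B^i$ changes the other $d_j$'s — but by part (1) of Lemma \ref{lem6}, subtracting $B^i$ (with $i\neq j$) from an element of $S_V$ leaves $d_j$ unchanged, so the counts $d_j$ are stable under removing $B^i$ for $i\neq j$. So I would remove $B^1$ exactly $a_1$ times (legal since $d_1$ starts at $a_1+1$ and drops by one each time by Lemma \ref{lem6}(2) run backwards, staying $\ge 1$ throughout, hence $\ge 2$ until the last step), staying in $S_V$ by Lemma \ref{lem5}; then $B^2$ exactly $a_2$ times, and so on. After all removals the resulting element is $\nn\in S_V$, and its $i$th invariant is $d_i(\mm) - a_i = 1$ by repeated application of Lemma \ref{lem6}(1)–(2).

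Finally, for the formula $a_i = \max\{k\in\ZZ_{\ge 0} \mid \mm - kB^i\in S_V\}$: by the second statement of Lemma \ref{lem5}, for any $\mm'\in S_V$ one has $\mm' - B^i \in S_V \iff d_i(\mm')\ge 2$. Starting from $\mm$ and using Lemma \ref{lem6}(2) to track how $d_i$ decreases by one with each subtraction of $B^i$ (while Lemma \ref{lem6}(1) guarantees that intermediate subtractions of $B^j$, $j\neq i$, are irrelevant here since we only subtract $B^i$), we see $\mm - kB^i\in S_V$ for $k\le d_i(\mm)-1$ and $\mm - d_i(\mm)B^i \notin S_V$ (its $d_i$ would be $0$). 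Hence the maximum is exactly $d_i(\mm)-1 = a_i$.

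I expect the main obstacle to be bookkeeping in the existence part: one must remove the $B^i$'s in an order and check at each step that the relevant $d$-invariant is still $\ge 2$ so that Lemma \ref{lem5} applies, and that removing $B^i$ does not disturb the $d_j$ for $j\neq i$ (which is exactly Lemma \ref{lem6}(1), but it is stated for \emph{adding} $B^j$ to an element of $S_V$, so I must apply it to $\nn'$ and $\nn' + B^i$ where $\nn'$ is the element \emph{after} removal — this requires knowing $\nn'\in S_V$, which is why the peeling must be done in sequence, maintaining membership as an invariant). Once that inductive scaffolding is set up carefully, the rest is a direct combination of the two lemmas.
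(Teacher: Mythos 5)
Your proof is correct and follows essentially the same route as the paper: uniqueness via Lemma~\ref{lem6} pins $a_i = d_i(\mm)-1$, and existence via Lemmas~\ref{lem5}--\ref{lem6} shows one can peel off $B^i$'s while staying in $S_V$. The only difference is cosmetic --- you peel in an explicit order and track the $d$-invariants step by step, while the paper condenses the existence step to the single exchange property ``$\mm-B^i\in S_V$ and $\mm-B^j\in S_V$ imply $\mm-B^i-B^j\in S_V$'' and leaves the induction implicit.
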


\begin{proof}
Assume the existence of the values $a_1,\ldots, a_r, \nn$, then, by
Lemma \ref{lem6}, $1=d_i(\nn)=d_i(\mm-a_iB^i)=d_i(\mm)-a_i$, and by
Lemma \ref{lem5}, $\nn-B^i\notin S_V$, so $a_i= \max\{k\in \ZZ_{\geq
0} \mid \mm - kB^i \in S_V\}$, and we have the uniqueness. We also
have $a_i=d_i(\mm)-1$.

For the existence, define $a_i= \max\{k\in \ZZ_{\geq 0} \mid \mm - k
B^i \in S_V\}$ and $\nn = \mm-\sum_ka_kB^k$. To prove $\nn\in S_V$
it suffices to prove that if $\mm-B^i\in S_V$ and $\mm-B^j\in S_V$
then $\mm-B^i-B^j\in S_V$. The conditions $\mm-B^i\in S_V$ and
$\mm-B^j\in S_V$ imply, by Lemmas~\ref{lem5} and \ref{lem6} that
$d_j(\mm-B^i) = d_j(\mm) \ge 2$ and hence that $\mm-B^i-B^j\in S_V$.
\end{proof}

\begin{corollary}
Given a modification $\pi$ and the family of all the valuations associated to the components $\{E_1,\ldots, E_s\}$ of the exceptional divisor of $\pi$, $W=\{\nu_1,\ldots, \nu_s\}$, it holds that
$S_W=\langle B^1,\ldots, B^s \rangle\cong\mathbb Z^s_{\geq 0}$. $\; \; \Box$
\end{corollary}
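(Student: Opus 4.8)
The plan is to read off the statement from Theorem~\ref{semi} and Theorem~\ref{str_semi} once their combinatorial ingredients are specialized to the case $V=W$. Since $W$ consists of \emph{all} the divisors of $\pi$, the minimal resolution of $W$ is $\pi$ itself and every vertex of ${\cal G}(\pi)$ is one of the marked vertices $\alpha(i)$; hence $\Omega=\union_{i}[{\bf 1},\alpha(i)]={\cal G}(\pi)$. Moreover ${\bf 1}$ is itself marked (it corresponds to $E_1$, and the $E_1$-valuation belongs to $W$), so $\Gamma=\bigcap_i[{\bf 1},\alpha(i)]=\{{\bf 1}\}$; and for every dead end $\rho\in{\cal E}$ we have $\rho\in\Omega$, whence the vertex of $\Omega$ nearest to $\rho$ is $\beta_\rho=\rho$ itself. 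Therefore $\Gamma\cup\{\beta_\rho\mid\rho\in{\cal E}\}=\{{\bf 1}\}\cup{\cal E}$ and
$$
{\cal H}=\{{\bf 1}\}\cup{\cal E}\cup\bigl({\cal G}(\pi)\setminus(\{{\bf 1}\}\cup{\cal E})\bigr)={\cal G}(\pi).
$$
By Theorem~\ref{semi} the indecomposable elements of $S_W$ are then exactly $\{\nuv(Q_\alpha)\mid\alpha\in{\cal G}(\pi)\}=\{B^1,\ldots,B^s\}$ (the case $s=1$ being immediate, as then $S_W=\ZZ_{\ge 0}=\langle B^1\rangle$), and since any element of a subsemigroup of $\ZZ^s_{\ge 0}$ is a finite sum of indecomposable ones (induction on the total degree $\suma_j m_j$), this yields $S_W=\langle B^1,\ldots,B^s\rangle$.

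It remains to prove uniqueness of the representation, i.e. that the surjective semigroup homomorphism $\phi:\ZZ^s_{\ge 0}\to S_W$, $\phi(c_1,\ldots,c_s)=\suma_i c_iB^i$, is injective. Fix $\mm\in S_W$ and write $\mm=\suma_i c_iB^i$ with $c_i\ge 0$. I would compute $d_i(\mm)$ by iterating Lemma~\ref{lem6}, starting from $0\in S_W$, for which $J(0)/J(\ee_i)=R/\ideal{m}$ gives $d_i(0)=1$. Adding the summands $c_jB^j$ with $j\ne i$ one at a time, every intermediate sum lies in $S_W$, so part (1) of Lemma~\ref{lem6} keeps $d_i$ equal to $1$; then adding $B^i$ exactly $c_i$ times, each intermediate element again lies in $S_W$ and hence has $d_i\ne 0$, so part (2) of Lemma~\ref{lem6} raises $d_i$ by one at each step. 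This gives $d_i(\mm)=1+c_i$, so $c_i=d_i(\mm)-1$ is determined by $\mm$; thus $\phi$ is injective and $S_W\cong\ZZ^s_{\ge 0}$. Equivalently, $c_i$ coincides with the exponent $a_i=d_i(\mm)-1$ of Theorem~\ref{str_semi}, so the ``remainder'' $\nn=\mm-\suma_i a_iB^i$ is $0$; that is, $S_W$ is free on $B^1,\ldots,B^s$.

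The only genuinely delicate step is the first one: correctly identifying $\Omega$, $\Gamma$ and the vertices $\beta_\rho$ when $V=W$, the crucial observations being that $\Gamma$ collapses to $\{{\bf 1}\}$ precisely because ${\bf 1}$ is itself a marked vertex, and that $\beta_\rho=\rho$ because every dead end already belongs to $\Omega$. Once ${\cal H}={\cal G}(\pi)$ has been checked, both the generation and the freeness follow directly from the results of this section.
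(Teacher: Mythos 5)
Your proof is correct and consistent with the paper's intent: the paper states the corollary without proof (with a remark that it is "a consequence of Theorem~\ref{str_semi}"), and your argument fills in exactly the missing details. The specialization $\mathcal{H}=\mathcal{G}(\pi)$ in Theorem~\ref{semi} gives generation, and iterating Lemma~\ref{lem6} from $d_i(0)=1$ to get $d_i(\mm)=1+c_i$ gives freeness — which is precisely the mechanism underlying Theorem~\ref{str_semi}, so your approach matches the paper's.
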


\begin{remark}
The above corollary, established here as a consequence of
Theorem~\ref{str_semi}, was already
known, since the determinant of the intersection matrix of
the components $\{\seq E1s\}$ of the exceptional divisor of
$\pi$, $M =  (E_i\cdot E_j)$, is  $-1$,
the $s$  rows of $A = -M^{-1}$ are exactly the values
$\{B^1,\ldots, B^s\}$ and  $S_{W}=\{\mm\in \ZZ_{\geq
0}^s\mid-\mm M\geq 0\}$. Thus the
semigroup is the free semigroup generated by the vectors
$B^1, \ldots, B^s$.

In the general case, valuations in $V$ are those corresponding
to a subset $L$ of $\{1,\ldots, s\}$, $V=V_L=\{\nu_l\mid l\in
L\}$, and then $S_{V_L}$ is the projection over $\mathbb Z_{\ge
0}^{|L|}$ (that is, over the coordinates in $L$) of  the semigroup
$S_{W}$, so it is  contained
in the convex polyhedral cone in $\mathbb
R^{|L|}_{\geq 0}$ generated by the elements $\{B_l\mid l\in L\}$.
\end{remark}

\section{Graded algebra and generating sequences}\label{gradedalg}

Throughout this section, we will consider a nonempty finite set of
divisorial valuations $V=\{\seq{\nu}1r\}$ and we will use the
notations of the above sections. The  {\bf graded $K$-algebra}
associated to $V$ is defined to be
\[
\gr_{V}R : = \bigoplus_{\mm \in  \ZZ_{\ge 0}^r}
\frac{J(\mm)}{J(\mm + \ee)}\; .
\]

Set $\Lambda = \{u_j\}_{j\in J}$  a subset of the maximal ideal
\ideal{m} of $R$. A monomial in $\Lambda$ is a product $\pr_{j\in
J}u_j^{\gamma_j}$ with $\gamma_j\in \ZZ_{\ge 0}$ and $\gamma_j=0$
except for a finite subset of $J$. Let ${\cal M}(\Lambda)$ denote
the set of monomials in $\Lambda$, we will say that $\Lambda$ is
a
{\bf generating sequence} of $V$ if for each $\mm\in \ZZ_{\ge 0}^r$ the ideal
$J(\mm)$ is generated by ${\cal M}_{\mm}(\Lambda):= {\cal
M}(\Lambda)\cap J(\mm)$. In particular, $\Lambda$ is a system of
generators of $\ideal m$.

A generating sequence $\Lambda$ of $V$ is said to be minimal whenever each proper subset
of $\Lambda$ fails to be a generating sequence. In this case $V$ is said to be
\textbf{monomial with respect to $\Lambda$}. Generating sequences of a family $V$ and its
graded algebra $\gr_{V}R $ are closely related, as the following result (proved in
\cite{c-g} in a more general context)  shows:

\begin{theorem}
\label{belga} Assume that there exists a finite generating sequence
for some valuation of $V$. Then, a  system of generators $ \Lambda =
\{ u_{j} \}_{j \in J}$ of the maximal ideal $\ideal{m}$ is a
generating sequence of $V$ if and only if the $K$-algebra $\gr_{V}R$
is generated by  the set $\union_{j\in J} [u_{j}]$, where $[u_{j}]$
denotes the cosets that $u_{j}$ defines in $\gr_{V}R$. $\; \; \Box$
\end{theorem}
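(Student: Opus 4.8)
The plan is to prove the two implications separately. Write $\beta^j=\nuv(u_j)$, and record two preliminary facts. Since each $\nu_i$ is centred at $R$ one has $\{g\in R\mid\nu_i(g)\ge 1\}=\ideal m$, hence $J(\ee)=\ideal m$ and the degree-$\mathbf 0$ component of $\gr_{V}R$ is the field $K$; thus $\gr_{V}R$ is a $\ZZ_{\ge0}^r$-graded ring with $(\gr_{V}R)_{\mathbf 0}=K$. Moreover, since the minimal resolution of $V$ factors through the blow-up of $\ideal m$, for every $g\in R\setminus\{0\}$ and every $i$ one has $\operatorname{ord}_{\ideal m}(g)\le\nu_i(g)\le C_V\operatorname{ord}_{\ideal m}(g)$, where $C_V$ depends only on $V$ (the right-hand inequality because $\nu_i(g)=\sum_j m_j(g)\,\bar a_{ij}$ is a finite sum over the infinitely near points of the minimal resolution of $\nu_i$, and $m_j(g)\le m_0(g)=\operatorname{ord}_{\ideal m}(g)$ since multiplicities of a fixed curve do not increase under blow-up). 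Consequently $J(\mm+n\ee)\subseteq\ideal m^{\lceil n/C_V\rceil}$ for all $\mm$ and all $n\ge0$.

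($\gr_{V}R$ generated by $\union_{j}[u_j]$ $\Rightarrow$ $\Lambda$ generating sequence.) Fix $\mm$ and $h\in J(\mm)$; we must show $h\in I:=({\cal M}_{\mm}(\Lambda))$. The degree-$\mm$ component of the subalgebra generated by the $[u_j]$ is spanned by the classes of the monomials $M\in{\cal M}(\Lambda)$ with $\nuv(M)=\mm$, so the hypothesis yields such monomials and scalars with $h-\sum c_MM\in J(\mm+\ee)$; each such $M$ belongs to ${\cal M}_{\mm}(\Lambda)$, so modulo $I$ we have replaced $h$ by an element of $J(\mm+\ee)$. Repeating (the monomials produced at step $n$ have value $\mm+n\ee\ge\mm$, hence still lie in ${\cal M}_{\mm}(\Lambda)$), we obtain $h\equiv h_n\pmod{I}$ with $h_n\in J(\mm+n\ee)\subseteq\ideal m^{\lceil n/C_V\rceil}$; hence $h\in\bigcap_n(I+\ideal m^n)=I$ by the Krull intersection theorem in the Noetherian local ring $R$. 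Together with the trivial inclusion $I\subseteq J(\mm)$ this gives $J(\mm)=I$ for every $\mm$.

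($\Lambda$ generating sequence $\Rightarrow$ $\gr_{V}R$ generated by $\union_{j}[u_j]$.) Since $(\gr_{V}R)_{\mathbf 0}=K$ and the $n$-th power of the graded maximal ideal $(\gr_{V}R)_+=\bigoplus_{\mm\ne\mathbf 0}(\gr_{V}R)_{\mm}$ meets the component of degree $\mm$ trivially once $n>|\mm|$, graded Nakayama reduces the claim to: the classes $[u_j]$ span $(\gr_{V}R)_+/(\gr_{V}R)_+^{2}$. Fix $\mm\ne\mathbf 0$ and $h\in J(\mm)$, and expand $h=\sum c_MM$ with $M\in{\cal M}(\Lambda)$, $\nuv(M)\ge\mm$. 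A monomial $M$ with $\nuv(M)\ge\mm+\ee$ contributes $0$ to $[h]_{\mm}$; a monomial $M=u_j$ with $\nuv(u_j)=\mm$ contributes $[u_j]$; and if $M$ is a product of at least two generators and $[M]_{\mm}\ne 0$, then there is a coordinate $i_0$ with $\nu_{i_0}(M)=m_{i_0}$, so some factor $u_{j_1}$ of $M$ has $\nu_{i_0}(u_{j_1})<m_{i_0}$, i.e. $u_{j_1}\notin J(\mm)$; writing $M=u_{j_1}\cdot M_2$ and $\mm_1=\min(\beta^{j_1},\mm)$, $\mm_2=\mm-\mm_1$ (both nonzero, the first because $\mm\ne\mathbf 0$, the second because $u_{j_1}\notin J(\mm)$), one gets $u_{j_1}\in J(\mm_1)$, $M_2\in J(\mm_2)$ and $[M]_{\mm}=[u_{j_1}]_{\mm_1}\cdot[M_2]_{\mm_2}\in(\gr_{V}R)_+^2$. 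After these reductions, the only terms not manifestly in $\sum_k[u_k]\gr_{V}R+(\gr_{V}R)_+^2$ come from generators $M=u_j$ with $\mm\le\beta^j$, $\mm\ne\beta^j$ and $\beta^j\not\ge\mm+\ee$.

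This last case is the main obstacle. Here $[u_j]_{\mm}$ is a nonzero class in degree $\mm$ which is neither obviously a multiple of some $[u_k]$ nor obviously decomposable in $(\gr_{V}R)_+^2$, and the ``stuckness'' ($\beta^j\not\ge\mm+\ee$) is invariant under multiplying by classes of monomials, so a naive degree shift does not settle it. To handle it I would bring in the structure of Section~\ref{value}: via the embedding $D(\mm)\hookrightarrow\prod_iD_i(\mm)$ and Lemma~\ref{lem6}, multiplication by the class of a $Q_{\alpha(i)}$ through a generic point of $E_{\alpha(i)}$ is injective on each $(\gr_{V}R)_{\mm}$, so $\gr_{V}R$ has non-zero-divisors in each relevant degree; using these, together with the finiteness hypothesis (which guarantees that the vector spaces $D_i(\mm)$ involved are finite-dimensional) and the explicit description of the $D_i(\mm)$, one reduces $[u_j]_{\mm}$ to the cases already treated. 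Alternatively, since the statement is proved in full generality in \cite{c-g}, one may simply quote it; the present sketch shows why the two formulations coincide and isolates the single delicate point.
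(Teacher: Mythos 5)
The paper does not prove this theorem: it is stated with a \(\Box\), citing \cite{c-g} for a proof in a more general setting, exactly as you note at the end. So there is no paper-internal argument to compare against; what you have written is a stand-alone proof.

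Your first implication is correct. (Two small points: the degree-\(\mm\) component of the subalgebra generated by \(\bigcup_j[u_j]\) is spanned by classes \([M]_{\mm}\) of monomials with \(\nuv(M)\ge\mm\), not necessarily \(\nuv(M)=\mm\) --- but such \(M\) still lie in \(\mathcal M_{\mm}(\Lambda)\), so the iteration goes through unchanged. The bound \(\nu_i(g)\le C_V\operatorname{ord}_{\ideal m}(g)\) holds for any divisorial valuation because the associated cluster of infinitely near points is finite; this is what you used, and it is what makes the Krull-intersection argument close. In the generality of \cite{c-g} one needs the ``finite generating sequence'' hypothesis for this kind of convergence, which is why it appears in the statement.)

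Your second implication is also essentially complete, but the ``main obstacle'' you isolate at the end is not an obstacle at all, and you do not need the machinery of Section 2 to dispose of it. The paper's own gloss on the notation reads: for \(u\in\ideal m\) with \(\nuv(u)=\mm_u\), \([u]:=\{[u]_{\nn}\mid \nn\le\mm_u,\ \nn+\ee\nleq\mm_u\}\). Thus \([u_j]\) is the entire set of nonzero cosets of \(u_j\), one in each degree where \(u_j\) survives, and \emph{all} of them are taken as algebra generators. The problematic term you describe --- \(M=u_j\) with \(\mathbf 0\ne\mm\le\beta^j\), \(\mm\ne\beta^j\), \(\beta^j\not\ge\mm+\ee\) --- is precisely \([u_j]_{\mm}\), which is by definition one of the generators in \([u_j]\). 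So after your reductions, \([h]_{\mm}\) lies in \(K\!\cdot\!\bigl(\bigcup_j[u_j]\bigr)+(\gr_VR)_+^2\), and graded Nakayama finishes. One more small gap worth tightening: the generating-sequence hypothesis writes \(h=\sum a_MM\) with \(a_M\in R\), not with scalar coefficients. Terms with \(a_M\in\ideal m\) contribute \([a_M]_{\nn_a}[M]_{\nn_M}\in(\gr_VR)_+^2\) with \(\nn_a=\min(\nuv(a_M),\mm)\ne\mathbf 0\) and \(\nn_M=\mm-\nn_a\) (nonzero since otherwise the term dies mod \(J(\mm+\ee)\)); terms with \(a_M\) a unit give scalar multiples of \([M]_{\mm}\), to which your case analysis applies. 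With these two clarifications your proof is complete and self-contained.
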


It is convenient to clarify the sense of the notation $[u]$ in the above theorem: if
$u\in \ideal{m}$ and $\mm=\nuv(u)$, then $u\in J(\nn)$ for any $\nn \le \mm$. Denote
$[u]_{\nn}:= u + J(\nn+\ee)$. So, $[u]_{\nn}\neq 0$ if, and only if, $\nn+\ee \nleq \mm$
(that is, $n_i=m_i$ for some index $i\in \{1,\ldots,r\}$). Then, $[u]$ in Theorem
\ref{belga} means $ [u]:=\{[u]_\nn \; | \; \nn\le \mm \text{ and } \nn+\ee \nleq \mm
\}\;$.

\medskip

Denote by ${\cal E}$ the set of dead ends of the dual graph of $V$ and fix an element
$Q_\rho\in R$ for each $\rho\in {\cal E}$ . Set
$$
\Lambda_{\cal E} = \{Q_{\rho}\; | \; \rho\in {\cal
E}\}\; .
$$
Next result is the analogous of Proposition~\ref{mainp} for
initial forms of elements
in $R$.

\begin{proposition}\label{truemain}
Given $h\in R$ and $i\in \{1,\ldots,r\}$, there exists a linear combination  of monomials
$q = \sum a_\lambda q^{\lambda}$, $q^\lambda = \prod_{\rho\in {\cal E}}
Q_\rho^{\lambda_\rho}$, such that $\nu_i(q)=\nu_i(h)$, $\nu_i(h-q)>\nu_i(h)$ and
$\nu_j(q)\ge \nu_j(h)$ for every index $j \neq i$.
\end{proposition}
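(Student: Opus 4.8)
The plan is to reduce, as in the proof of Proposition~\ref{mainp}, to the case in which $h$ is an irreducible element whose strict transform meets the exceptional divisor only at a single component. Indeed, writing $h = \prod_l h_l$ as a product of irreducible factors, if for each $h_l$ we produce $q_l = \sum a_\lambda q^\lambda$ with $\nu_i(q_l)=\nu_i(h_l)$, $\nu_i(h_l-q_l)>\nu_i(h_l)$ and $\nu_j(q_l)\ge \nu_j(h_l)$ for $j\neq i$, then the product $q = \prod_l q_l$, expanded as a linear combination of monomials in $\Lambda_{\cal E}$, works for $h$: the $\nu_i$-initial form of a product is the product of the $\nu_i$-initial forms, so $\nu_i(q)=\sum_l\nu_i(h_l)=\nu_i(h)$ and $\nu_i(h-q)>\nu_i(h)$, and $\nu_j(q)=\sum_l\nu_j(q_l)\ge\sum_l\nu_j(h_l)=\nu_j(h)$. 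So from now on assume $h$ irreducible, let $\alpha(h)$ be the component it meets, and take a modification $\pi$ that resolves $h$ and contains all the $E_{\alpha(j)}$ (adding blowing-ups does not change the valuations $\nu_j$ nor the set $\Lambda_{\cal E}$ up to the usual freedom in choosing the $Q_\rho$).

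Next I would invoke Proposition~\ref{mainp} to get a single monomial $q_0 = \prod_{\rho\in{\cal E}} Q_\rho^{\lambda_\rho}$ with $\nu_i(q_0)=\nu_i(h)$ and $\nu_\gamma(q_0)\ge \nu_\gamma(h)$ for all $\gamma\neq i$; in particular $\nu_j(q_0)\ge\nu_j(h)$ for the relevant indices $j$. The only thing possibly failing is $\nu_i(h-q_0)>\nu_i(h)$, i.e.\ the initial forms $\init_{\nu_i}(h)$ and $\init_{\nu_i}(q_0)$ might be proportional or not. If $\init_{\nu_i}(h)$ and $\init_{\nu_i}(q_0)$ are proportional, say $\init_{\nu_i}(h)=c\,\init_{\nu_i}(q_0)$ with $c\in K\setminus\{0\}$, then $q := c\,q_0$ already does the job. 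If they are linearly independent in $D_i(\nu_i(h))$, I would argue that one can repair $q_0$ by correcting the intersection point on $E_{\alpha(i)}$: by the Remark after Proposition~\ref{lem3}, linear independence of the initial forms means $\widetilde C_h$ and $\widetilde C_{q_0}$ hit $E_{\alpha(i)}$ (or the corresponding connected components of ${\cal G}(\pi)\setminus\{\alpha(i)\}$) at different points. Using the freedom in the choice of the $Q_\rho$ — equivalently, using Proposition~\ref{mainl} applied to the connected component $\Delta$ of ${\cal G}(\pi)\setminus\{\alpha(i)\}$ containing $\widetilde C_h\cap E$, which supplies a monomial $q_\Delta$ in the $Q_\rho$'s with $\rho\in{\cal E}\cap\Delta$ having $\nu_\gamma(q_\Delta)=\nu_\gamma(Q_{\alpha(i)})$ off $\Delta$ and strictly larger on $\Delta$ — I can form a two-parameter family $a\,q_0 + b\,q_\Delta^{\,\ell}$ (suitable power $\ell$ so the $\nu_i$-values match) whose members have $E_{\alpha(i)}$ dicritical; a generic member of this pencil then has an irreducible factor smooth and transversal to $E_{\alpha(i)}$ precisely at the point $\widetilde C_h\cap E_{\alpha(i)}$, hence with $\nu_i$-initial form proportional to that of $h$. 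Peeling off the other factors (each of which, by Proposition~\ref{lem3}, contributes a $Q_\rho$-monomial factor up to a constant and does not disturb the $\nu_j\ge\nu_j(h)$ inequalities) and re-expanding as a linear combination of monomials in $\Lambda_{\cal E}$ yields the desired $q$.

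The main obstacle I anticipate is the bookkeeping in the last step: making precise that a generic member of the pencil $a\,q_0 + b\,q_\Delta^{\,\ell}$ factors as (an irreducible curvette through the right point) times (a product of elements whose $\nu_i$-initial forms are monomials in the $Q_\rho$), and that after dividing out those factors one still has $\nu_j(\cdot)\ge \nu_j(h)$ for every $j\neq i$ — this is where the structure results of Section~1 (Propositions~\ref{lem1}, \ref{lem3}, \ref{mainl}) and the dicriticality criterion are all used simultaneously. One must also be a little careful that "linear combination of monomials'' is genuinely needed (not a single monomial): the correction term forces a genuine sum, which is exactly the phenomenon that distinguishes Proposition~\ref{truemain} from Proposition~\ref{mainp}. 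A cleaner packaging, which I would likely adopt in the write-up, is: work in $D_i(\nu_i(h))$, note that by Proposition~\ref{mainp} the classes of monomials in $\Lambda_{\cal E}$ with the right multi-values span a subspace containing a vector not proportional to any fixed direction unless that direction is itself such a monomial class, and then combine with Proposition~\ref{isom}/\ref{cor10} (which identify $\PP D_i(B^i)$ with $E_{\alpha(i)}$) to see that monomial initial forms already exhaust all of $D_i(B^i)$, and inductively all the relevant $D_i(\mm)$; this gives $\init_{\nu_i}(h)$ as a linear combination of initial forms of monomials in $\Lambda_{\cal E}$ with the required $\nu_j$-bounds, which is exactly the statement.
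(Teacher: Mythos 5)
Your overall plan — reduce to $h$ irreducible by multiplicativity of initial forms, then build $q$ from the monomial of Proposition~\ref{mainp}, correcting it when necessary — is sound, and it uses the right ingredients. But the middle part, where you propose to repair $q_0$ by a pencil argument, is both more complicated than necessary and has a concrete gap, and the ``cleaner packaging'' you sketch at the end is too vague to be checked. Let me point out where your argument diverges from what is actually needed.

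First, you treat the two cases (initial forms of $h$ and $q_0$ proportional or not) as if either could happen for any $h$. In fact, when $\alpha(h)\neq\alpha(i)$, Proposition~\ref{mainp} already guarantees that the exponents $\lambda_\rho\neq 0$ of $q_0$ all come from the connected component of $\alpha(h)$ in ${\cal G}(\pi)\setminus\{\alpha(i)\}$; so both $\w C_h$ and $\w C_{q_0}$ meet the same $E_\Delta$, and Proposition~\ref{lem3} gives directly a nonzero $c$ with $\nu_i(h-cq_0)>\nu_i(h)$. In that case the single monomial $cq_0$ already works — no pencil is needed. The genuinely hard case is $\alpha(h)=\alpha(i)$, which you do not isolate.

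Second, in that hard case the pencil-peeling you describe hits a real snag: if $\w C_h$ meets $E_{\alpha(i)}$ at a \emph{smooth} point of $E$ (not at an edge), then there is no connected component $\Delta$ of ${\cal G}(\pi)\setminus\{\alpha(i)\}$ ``containing $\w C_h\cap E$'', so your invocation of Proposition~\ref{mainl} for ``the component $\Delta$ containing $\w C_h\cap E$'' is not available, and the remaining ``generic member of the pencil through the right point, peel off factors'' step is not precise enough to see that the residual factor still obeys $\nu_j(\cdot)\ge\nu_j(h)$ for $j\neq i$. The paper's route avoids all of this: choose $Q_{\alpha(i)}$ through $P=\w C_h\cap E_{\alpha(i)}$, note $\nu_j(h)=\nu_j(Q_{\alpha(i)}^m)$ and $\init_{\nu_i}(h)=\lambda\,\init_{\nu_i}(Q_{\alpha(i)}^m)$ (by the Remark after Proposition~\ref{lem3}), and thereby reduce to $h=Q_{\alpha(i)}$. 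At that point $\nuv(h)=B^i$ and Proposition~\ref{isom} says $D_i(B^i)$ is \emph{two}-dimensional with $\PP D_i(B^i)\cong E_{\alpha(i)}$; taking the monomials $q_{\Delta'}$, $q_{\Delta''}$ from Proposition~\ref{mainl} for two \emph{different} connected components (or $Q_{\alpha(i)}$ itself together with $q_{\Delta_0}$ if $\alpha(i)$ is a dead end) produces initial forms mapped to distinct points of $E_{\alpha(i)}$, hence a basis of $D_i(B^i)$; so $\init_{\nu_i}(Q_{\alpha(i)})$ is a $K$-linear combination of them, and all monomials involved satisfy $\nu_j\ge\nu_j(Q_{\alpha(i)})$ by Proposition~\ref{mainl}. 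This is the precise version of what you gesture at in your closing paragraph, but there you speak of ``inductively all the relevant $D_i(\mm)$'', which is both unnecessary (the reduction to $B^i$ does all the work) and not something you justify. Fixing your write-up would essentially amount to adopting this case split and the reduction to $h=Q_{\alpha(i)}$.
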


\begin{proof}
Note that the condition $\nu_i(h-q)>\nu_i(h)$ is equivalent to
$\init_{\nu_i}(q)=\init_{\nu_i}(h)$. Thus, it suffices to prove the result for $h$
irreducible. Let $\pi$ be the minimal modification such that $\pi$ is a resolution of $V$
and the strict transform $\w{C}_h$ of $C_h$ by $\pi$ only meets one irreducible component
of the exceptional divisor of $\pi$, $E_{\alpha(h)}$.

If $\alpha(h) \neq \alpha(i)$ then, by Proposition \ref{lem3},
there exists $\lambda$ such  that $\lambda q$, $q$ being the
monomial constructed in Proposition \ref{mainp}, satisfies the
result. Assume that $\alpha(h)=\alpha(i)$ and choose
$Q_{\alpha(h)}$ such that $\w{C}_{Q_{\alpha(h)}}$ goes through the
intersection point $ E_\alpha \cap \w{C}_h$. Denoting
$m=(E_{\alpha(h)}, \w{C}_h)$, we have $\nu_j(h) =
\nu_j(Q_\alpha^m)$ for $1 \leq j \leq r$,  and $\init_{\nu_i} (h)
= \lambda \, \init_{\nu_i}(Q_\alpha^m)$ for some $\lambda \in
K\setminus\{0\}$ (see the remark after Proposition \ref{lem3}), so
we only need to prove the statement for $h = Q_{\alpha(i)}$.

Let $\seq{\Delta}0s$ be the connected components of ${\cal
G}(\pi)\setminus\{\alpha(i)\}$ and  $q_{\Delta_i}$, $0\le i\le s$
the monomial constructed in Proposition~\ref{mainl} for
$\Delta_i$. If $s\ge 1$, by Proposition~\ref{isom}, the classes of
any pair $q'$ and $q''$ of such monomials are a basis of
$D_i(\nuv(Q_{\alpha(i)}))$, thus $\init_{\nu_i}(h)=
\lambda\init_{\nu_i}(q')+\mu\init_{\nu_i}(q'')$ for some
$\lambda,\mu\in K$ and the linear combination $q= \lambda q' +\mu
q''$ satisfies the requirements of the statement. Finally, if
$s=0$, the vertex $\alpha(i)$ is an end vertex and we can use
$Q_{\alpha(i)}$ together with $q_{\Delta_0}$ to have a basis of
$D_i(\nuv(Q_{\alpha(i)}))$.
\end{proof}

\bigskip
Now, let $C$  be a reduced plane curve with $r$ branches,
$C_1,\ldots, C_r$, and local ring ${\cal O}=R/(f)$, and denote $\vv
: =(\seq v1r)$, where $v_i$ is the valuation associated to $C_i$. We
will say that $\Lambda\subset \ideal{m}$ is a generating sequence of
$C$ if the valuation ideals $J^C(\mm)= \{g \in {\cal O} | \vv(g)
\geq \mm\}$ are generated by the images in ${\cal O}$ of the
monomials in $\Lambda$. We will set $c(\mm):=\dim C(\mm)$, where
$C(\mm)=\dfrac{J^C(\mm)}{J^C(\mm + \ee)}$ is the corresponding
vector space of initial forms. Finally, we define the graded
$K$-algebra of ${\cal O}$ as
$$
{\rm gr}{\cal O} : = \bigoplus_{\mm \in  \ZZ_{\ge 0}^r}
\frac{J^C(\mm)}{J^C(\mm + \ee)}\; .
$$

\medskip
Denote by ${\cal E}$ the set of dead ends of the dual graph of $C$
and let $f_i$ be an element in $R$ that gives an equation for
$C_i$, ($1 \le i \le r$). We define
$$
\Lambda_{\k{\cal E}} = \{Q_{\alpha}  \; |\; \alpha\in {\cal E}\}
\cup \{\seq f1r\}\; .
$$
where we do not include $f=f_1$ if $r=1$.

Reduced curves can be approached by finite sets of divisorial
valuations. Indeed, denote by $\pi^{(0)}: X^{(0)}\to \spec R$ the
minimal embedded resolution of the curve $C$,  and by
$\pi^{(k)}:(X^{(k)},E^{(k)})\to \spec R$ the composition of
$\pi^{(k-1)}$ with $r$ additional blowing-ups, one at each point
where the strict transform of $C$ intersects $E^{(k-1)}$. Set
$\nu^{(k)}_i$ the $E_{\alpha^{(k)}(i)}$-valuation,
$E_{\alpha^{(k)}(i)}$ being the irreducible component of the
exceptional divisor $E^{(k)}$ intersected by the strict transform
of the branch $C_{i}$. Then, the sequence
$V^{(k)}=\{\seq{\nu^{(k)}}1r\}$ approaches $C$, in the sense that
for any element $h\in R$ which is not divisible by any $f_i$,
$\nu^{(k)}_i(h)=(f_i,h)=v_i(h)$ for $k\gg 0$ and $1\le i\le r$.

For $1\le i\le r$, denote by $\alpha^{(0)}(i)$ the vertex of $\mathcal G(\pi^{(0)})$ such
that the strict transform of the branch $C_i$ meets $E_{\alpha^{(0)}(i)}$. Then, for any
$k>0$, the graph of $V^{(k)}$, $\mathcal G(\pi^{(k)})$, is obtained by adding $r$
vertices $\alpha^{(k)}(1), \ldots, \alpha^{(k)}(r)$ (corresponding to the components
$E_{\alpha^{(k)}(i)}$) to $\mathcal G(\pi^{(k-1)})$, each $\alpha^{(k)}(i)$ adjacent to
$\alpha^{(k-1)}(i)$. Denoting by $\mathcal E^{(k)}$  the set of dead ends of $\mathcal
G(\pi^{(k)})$, it is clear that  $\mathcal E^{(k)}=(\mathcal E^{(k-1)}\setminus \{\alpha^{(k-1)}(1), \ldots,
\alpha^{(k-1)}(r)\})\cup \{\alpha^{(k)}(1), \ldots, \alpha^{(k)}(r)\}$ for $k\ge 1$.

Moreover, for each $k\ge 0$, the strict transform by $\pi^{(k)}$ of
the branch $C_i$ is smooth and meets $E_{\alpha^{(k)}(i)}$
transversally at a nonsingular point, so we can choose
$Q_{\alpha^{(k)}(i)}=f_i$. In this way, for every $k\ge 1$, when $r>1$ we have
$\Lambda_{\mathcal E^{(k)}}= \Lambda_{\k{\cal E}}$ ,  and   $\Lambda_{\mathcal E^{(k)}}= \Lambda_{\k{\cal E}}\cup \{f_1\}$ in the case $r=1$.

Note that $\mathcal G(\pi^{(k)})\subset \mathcal G(\pi^{(k+1)})$,
and the (infinite) graph obtained by blowing-up every infinitely
near point of $C$, is exactly the union $\union_{k\ge 0}\mathcal
G(\pi^{(k)})$. Analogously, if $S_{V^{(k)}}$ denotes the value
semigroup of the set $V^{(k)}$, one  gets the inclusion chain
$S_{V^{(0)}}\subseteq S_{V^{(1)}}\subseteq  \cdots$ and  the
equality $S_C = \union_{k\ge 0} S_{V^{(k)}}$.

\begin{theorem}\label{maint}
Let $V$ and $C$ be as above. Then, $\Lambda_{\cal E}$
($\Lambda_{\k{\cal E}}$, respectively) is a  minimal generating
sequence of $V$ ($C$, respectively).
\end{theorem}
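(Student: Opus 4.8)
The plan is to establish the statement for $V$ first, via the graded algebra, and then transfer it to $C$ through the approximating families $V^{(k)}$. For $V$, by Theorem~\ref{belga} (each single valuation $\nu_i$ admits a finite generating sequence by \cite{s-2}, and $\Lambda_{\cal E}$ generates $\ideal m$) it suffices to show that $\gr_V R$ is generated as a $K$-algebra by the classes $[Q_\rho]$, $\rho\in{\cal E}$. Call $A$ the subalgebra they generate; being graded, I only need $A$ to contain every homogeneous piece $D(\mm)=J(\mm)/J(\mm+\ee)$. So I fix $h\in J(\mm)\setminus J(\mm+\ee)$, set $\nn=\nuv(h)\ge\mm$, and observe that since $J(\nn+\ee)\subseteq J(\mm+\ee)$ and the classes of such $h$ span $D(\mm)$, it is enough to get $[h]_\nn\in A$.

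To do that I would run Proposition~\ref{truemain} successively over the indices $i=1,\dots,r$. Put $h_0=h$; at step $i$, if $\nu_i(h_{i-1})>n_i$ set $q^{(i)}=0$, and if $\nu_i(h_{i-1})=n_i$ let $q^{(i)}$ be the linear combination of monomials in $\Lambda_{\cal E}$ provided by Proposition~\ref{truemain} for $h_{i-1}$ and $i$; then set $h_i=h_{i-1}-q^{(i)}$. By construction $\init_{\nu_i}(q^{(i)})=\init_{\nu_i}(h_{i-1})$ and $\nu_j(q^{(i)})\ge\nu_j(h_{i-1})$ for $j\neq i$, and an induction on $i$ using exactly these inequalities gives $\nu_j(h_i)>n_j$ for $j\le i$ and $\nu_j(h_i)\ge n_j$ for $j>i$; hence $h_r\in J(\nn+\ee)$, so that $[h]_\nn=\sum_{i=1}^r[q^{(i)}]_\nn$ lies in $A$. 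The point that needs care here is that each monomial actually occurring in $q^{(i)}$ must have value $\ge\nuv(h_{i-1})\ge\nn$ in every coordinate (not merely the combination $q^{(i)}$ itself); this is not stated in Proposition~\ref{truemain} but follows by unwinding its proof, which reduces to $h_{i-1}$ irreducible (initial forms being multiplicative) and then to a single $Q_\alpha$ through Propositions~\ref{mainp} and \ref{mainl}, each of which outputs monomials of value $\ge$ that of the element being treated.

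For $C$, I would use that $\Lambda_{\mathcal E^{(k)}}=\Lambda_{\k{\cal E}}$ for all $k\ge 1$ when $r>1$ (and $\Lambda_{\k{\cal E}}\cup\{f_1\}$ when $r=1$, but then $\bar f_1=0$ in ${\cal O}$ and $f_1$ plays no role there), so by the case already proved $\Lambda_{\k{\cal E}}$ is a generating sequence of every $V^{(k)}$. Given $\mm$ and $g\in J^C(\mm)$, I first treat $g$ a nonzerodivisor: lifting it to $\w g\in R$, not divisible by any $f_i$, and choosing $k$ so large that $\nu_i^{(k)}(\w g)=v_i(\w g)$ for all $i$, we have $\w g\in J^{(k)}(\mm)$ and hence $\w g=\sum c_\mu q^\mu$ in $R$ with $q^\mu$ monomials in $\Lambda_{\k{\cal E}}$ and $\nuv^{(k)}(q^\mu)\ge\mm$; reducing modulo $f$ and using $v_i(q^\mu)\ge\nu_i^{(k)}(q^\mu)$ (intersection multiplicities do not decrease under blowing-up, and $v_i(q^\mu)=\infty$ when $f_i\mid q^\mu$) yields $\vv(q^\mu)\ge\mm$, so $g$ is an ${\cal O}$-combination of the wanted monomials. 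For an arbitrary $g\in J^C(\mm)$ I would factor $g=\bar f_{i_1}\cdots\bar f_{i_s}\,g_1$ with $g_1$ a nonzerodivisor, apply the previous step to $g_1$ with the exponent vector lowered by the intersection numbers $(C_i,C_{i_j})$, and multiply back by the $\bar f_{i_j}\in\Lambda_{\k{\cal E}}$; the lowered coordinates are exactly recovered, so the resulting monomials still have value $\ge\mm$.

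Minimality in both cases is comparatively soft: for $\rho\in{\cal E}$, the value $\nuv(Q_\rho)$ is indecomposable in $S_V$ by Theorem~\ref{semi} (since ${\cal E}\subseteq{\cal H}$), and likewise $\vv(Q_\rho)$ is indecomposable in $S_C$ by Theorem~\ref{fel}, so no monomial in the other members of $\Lambda_{\cal E}$ (resp. $\Lambda_{\k{\cal E}}$) can simultaneously have value $\ge\nuv(Q_\rho)$ and supply the correct class in $D(\nuv(Q_\rho))$, and the corresponding valuation ideal is no longer generated once $Q_\rho$ is removed; for an element $f_i$ (curve case, $r>1$), $v_i(\bar f_i)=\infty$ while every monomial avoiding $f_i$ has finite $i$-th value, so deleting it leaves $J^C(\vv(\bar f_i))$ unreachable. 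The main obstacle I anticipate is exactly the componentwise value control appearing in the two middle paragraphs: ensuring that the monomial corrections from Proposition~\ref{truemain} remain $\ge\nn$ in every coordinate, and matching the valuations $\nu^{(k)}$ and $v$ for large enough $k$ on all the monomials involved.
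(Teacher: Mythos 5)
Your argument for the divisorial family $V$ follows the paper's route: reduce to generation of $\gr_V R$ via Theorem~\ref{belga} and iterate Proposition~\ref{truemain} over the indices. You also correctly notice the delicate point, which the paper leaves implicit, that each monomial produced along the way must itself have value $\geq \nuv(h)$ for the graded interpretation to make sense; unwinding Propositions~\ref{mainl}, \ref{mainp} and \ref{truemain} does give this. Your curve-case proof, however, is a genuinely different route. The paper works directly in ${\cal O}$: it iteratively subtracts monomial corrections (built via $V^{(k)}$ and Proposition~\ref{mainp}), then terminates the process with a conductor argument, pushing $\vv(h-q)$ past $\delta+\vv(Q)$ and using that the conductor ideal of $\overline{\cal O}$ in ${\cal O}$ equals $J^C(\delta)$. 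You instead reduce the curve case to the already-proved divisorial case for $V^{(k)}$, $k\gg 0$: lift $g\in J^C(\mm)$ to $R$, express it in the $J^{(k)}(\mm)$-ideal as an $R$-combination of monomials, push down using $v_i\ge\nu_i^{(k)}$, and treat zero divisors by stripping branch factors $\bar f_{i_j}$. This avoids the conductor and buys a shorter argument, at the cost of having proved the divisorial case first; both are valid.

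Where I see a real gap is in minimality. You invoke indecomposability of $\nuv(Q_\rho)$ in $S_V$ and conclude that no monomial in $\Lambda_{\cal E}\setminus\{Q_\rho\}$ can ``supply the correct class,'' so $J(\nuv(Q_\rho))$ becomes unreachable. But indecomposability only says that every surviving monomial $q$ with $\nuv(q)\ge\nuv(Q_\rho)$ has $\nu_i(q)>\nu_i(Q_\rho)$ for \emph{some} $i$ depending on $q$, i.e.\ its class dies in the corresponding $D_i(\nuv(Q_\rho))$. Since $D(\nuv(Q_\rho))$ only \emph{injects} into $\prod_j D_j(\nuv(Q_\rho))$, a $K$-linear combination of such classes can still be nonzero in every $D_j$ (think of $a$ with image $(1,0)$ and $b$ with image $(0,1)$: $a+b$ has image $(1,1)$), and an $R$-linear combination of these monomials could a priori still generate $J(\nuv(Q_\rho))$. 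So the step ``hence the valuation ideal is no longer generated'' is not justified as stated. The paper sidesteps this by reduction: any generating sequence for $V$ restricts (via $J(\mm')=J(\min pr_W^{-1}(\mm'))$) to a generating sequence for each $\nu_i$, every dead end of ${\cal G}(\pi)$ is a dead end for some $\nu_i$'s minimal resolution, and the $r=1$ statement (plus Proposition~\ref{isom} when $\alpha(i)$ is itself a dead end) forces an element of type $Q_\rho$. Your treatment of the $f_i$ in the curve case has a similar softness (you cannot literally form $J^C(\vv(\bar f_i))$ since one coordinate is $\infty$; the paper instead exhibits a concrete finite $\mm$ for which $f_i$ is indispensable). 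You should either adopt the paper's restriction argument or make the indecomposability reasoning quantitative.
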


\begin{proof}
Let us prove first the result for the curve $C$. Consider the
sequence $V^{(k)}$ as explained above, in such a way that
$\Lambda_{\mathcal E^{(k)}}= \Lambda_{\k{\cal E}}$ for every $k$ if $r>1$
and $\Lambda_{\k{\cal E}}=\Lambda_{\mathcal E^{(k)}}\setminus \{f_1\}$ if $r=1$.
Let $h\in R$ be such that $h\notin (f)$. We claim that there exists
a monomial $q_1$
in $\Lambda_{\k{\mathcal E}}$
such
that
$v_1(h)=v_1(q_1)$ and
$v_i(h)\le v_i(q_1)$ for $i=1,\ldots,r$. To prove the claim it is enough to find such a monomial for each irreducible component of $h$, so assume $h$ irreducible. Moreover, if $h=f_i$ for some $i$, then we can take $q_1=f_i$. Otherwise,  take $k>>0$
such that the strict transform of $C_h$ by $\pi^{(k)}$ does not
intersect any of the components $E_{\alpha^{(k)}(i)}$, $1\le i\le
r$.
Then, $\nu^{(k)}_i(h)=v_i(h)$ for $i=1,\ldots,r$, and applying
Proposition~\ref{mainp} to the set $V^{(k)}$ we find a monomial
$q_1=\pr_{\rho \in \mathcal E^{(k)}}Q_\rho^{\lambda_\rho}$ such
that
$\nu_1^{(k)}(h)=\nu_1^{(k)}(q_1)$,  $\nu_i^{(k)}(h)\le
\nu^{(k)}_i(q_1)$ for $i=1,\ldots,r$ and
$\lambda_{\alpha^{(k)}(1)}=0$. In particular, $f_1=Q_{\alpha^{(k)}(1)}$ does not appear in the expression of $q_1$, so $q_1$ is a monomial in $\Lambda_{\k{\mathcal E}}$ even in case $r=1$. Moreover, $\nu^{(k)}_1(q_1)=v_1(q_1)$ and
$\nu^{(k)}_i(q_1)\le v_i(q_1)$ for any $i$, therefore $v_1(h)=v_1(q_1)$ and
$v_i(h)\le v_i(q_1)$ for $i=1,\ldots,r$.

So, for $h\in R\setminus (f)$ we have the monomial $q_1$ of the claim, and there exists a nonzero constant $a_1$ with $v_1(h-a_1q_1)> v_1(h)$ and
$v_i(h-a_1q_1)\ge v_i(h)$ for $i \geq 2$. The same claim can be applied  to an
index (if it exists) $i\geq 2$ such that
$v_i(h - a_1 q_1) = v_i(h)$  and the element $h - a_1 q_1$, and iteratively we find a linear combination
of monomials $p=\sum a_i q_i$ satisfying $\vv(h-p)\ge \vv(h)+\ee$.
Now, if $\mathcal E\neq\emptyset$ choose any  $\rho\in {\cal E}$ and set $Q=Q_{\rho}$ , and if $\mathcal E=\emptyset$ (in particular $r\ge 2$) choose generic $\lambda_1, \ldots, \lambda_r$ in $K^*$ and set $Q=\lambda_1f_1+\ldots +\lambda_rf_r$.
Repeating the above procedure the times we need,  we can finally obtain a finite
linear combination $q$ of monomials in ${\cal M}(\Lambda_{\k{\cal
E}})$ such that $\vv(h-q)\ge \delta + \vv({Q})$ where
$\delta$ is the conductor of the semigroup $S_C$. The element $g=(h-q)/Q$
of the total ring of fractions of ${\cal O}$ has value $\vv(g)\ge \delta$, in particular it belongs to
the integral closure
$\k{\cal O}$ of the ring ${\cal O}$ in its  total
ring of fractions (since
$\k{\cal O}$ is in fact the set of elements $\phi$ of the total
ring of fractions such that $v_i(\phi)\geq 0$ for all
$i=1,\ldots,r$).
Moreover, the
conductor ideal of $\k{\cal O}$ in ${\cal O}$ coincides with the
valuation ideal $J^C(\delta)$, so  $g\in {\cal O}$ and
then $h = q + g Q$ belongs to the ideal generated by
${\cal M}_{\vv(h)}(\Lambda_{\k{\cal E}})$. Thus, the set $\Lambda_{\k{\cal
E}}$ is a finite generating sequence for the plane curve $C$.

Now, we prove the theorem for the set $V=\{\nu_1,\ldots, \nu_r\}$.
The case $r=1$ is proved in \cite{s-2}, hence, by Theorem \ref{belga}, it suffices to
show that for any $h\in R$, one can find a linear combination of monomials $q$ in
$\Lambda_{\cal E}$ such that $\nu_i(h-q)>\nu_i(h)$ for all $i=1,\ldots,r$. Proposition
\ref{truemain}, applied recursively for $i=1,\ldots, r$, gives a finite sequence of
polynomials $\seq{q}1r$ in $\Lambda_{\cal E}$ such that $ \nu_{j}(h-\suma_{k=1}^i q_k)
>\nu_j(h)$ for $j \leq i$ and $\nu_{j}(h-\suma_{k=1}^i q_k) \ge \nu_j(h)$ for $j=1,2,
\ldots, r$. Hence, $q= \suma_{k=1}^r q_k$ satisfies our
requirements.

To prove the minimality, it is enough to check that any generating
sequence must have an element of type $Q_{\rho}$ (that is,
irreducible and with strict transform smooth and transversal to
$E_{\rho}$ at a nonsingular point) for each $\rho\in\mathcal E$.

Suppose $r=1$, and consider the minimal set of generators of the
semigroup $S_C$ or $S_V$, $\seq{\betab}0g$ (corresponding to
$\mathcal E=\{\rho_0,\ldots,\rho_g\}$). In order to generate
$J(\betab_i)$ ($0\le i\le g$), we need at least an element $h\in
R$ such that $v_1(h) = \nu_1(h)=\betab_i$. But it is known (see e.g.
\cite{man}) that in this case $h$ must be of type $Q_{\rho_i}$. In
the divisorial case, if $\alpha(1)$ is a dead end, moreover we have  to
consider $\betab_{g+1}=\nu_1(Q_{\alpha(1)})$ and from
Proposition~\ref{isom} we deduce that to generate
$J(\betab_{g+1})$ we need some element of type $Q_{\alpha(1)}$,
since all the elements $h\in R$ such that $\nu_1(h)=\betab_{g+1}$
and $\w{C}_h\cap E_{\alpha(1)}=\emptyset$ have the same initial
form.

Now, assume $r>1$. Notice that for any $W\subset V$ and $\mm'\in
S_W$, $J(\mm')=J(\min pr_W^{-1}(\mm'))$, where $pr_W:S_V\to S_W$
is the projection map. Thus, any generating sequence for $V$ is
also a generating sequence for $W$ and in particular for $\nu_i$,
$1\le i\le r$. On the other hand, if $\rho\in {\cal G}(\pi)$ is a
dead end of the minimal resolution $\pi$ of $V$, then there exists
$i\in \{1,\ldots, r\}$ such that $\rho$ is a dead end of the
minimal resolution of $\nu_i$. Therefore we cannot delete any
$Q_\rho$ in our generating sequence and a similar argument holds
for curves. Finally, in this last case, if $v_j(h)=v_j(f_i)$,
$j\neq i$ and $h \neq f_i$, then $v_i(h)<k$ for some positive
integer $k$ and $\mm=(v_1(f_i), \ldots,k, \ldots, v_r(f_i))$,
where $k$ is in the $i$th coordinate, belongs to $S_C$; hence no
 $f_i$ can be omitted to generate $J^C(\mm)$.
\end{proof}

\begin{remark} We have also proved that minimal generating
sequences for $V$ and $C$ must be of the form given in Theorem
\ref{maint}. On the other hand, Theorem \ref{belga} is also true
for the case of curves, so $[\Lambda_{\cal
\k{E}}]=\{[Q_{\rho}]\}\cup\{[f_1],\ldots, [f_r]\}$ is a set of
generators of ${\rm gr} {\cal O}$. However, here the set
$[\Lambda_{\cal \k{E}}]$ has {\bf infinitely many elements},
because $[f_i]_{\nn} \neq 0$ for infinitely many elements $\nn \in
S_{C}$, since $v_i(f_i)=\infty$.
\end{remark}

\section{Poincar\'e series}\label{poincar}

Along this section we will suppose $r>1$. Let ${\cal L}:=\ZZ[[t_1,
t_1^{-1},\ldots, t_r,t_r^{-1}]]$ be the set of formal Laurent
series in $t_1, \ldots, t_r$  and $\t^{\mm}:=t_1^{m_1}\cdot\ldots
\cdot t_r^{m_r}$ for $\mm=(m_1,\ldots, m_r)\in\mathbb Z^r$. ${\cal
L}$ is not a ring, but it is a $\ZZ[t_1, \ldots, t_r]$--module and
a $\ZZ[t_1, t_1^{-1}, \ldots, t_r, t_r^{-1}]$--module. For a
reduced plane curve $C$ with $r$ branches, the formal Laurent
series
$$ L_C(t_1,
\ldots,t_r)=\suma_{\mm\in\ZZ^r} c(\mm)\cdot\t^\mm\ \in\ {\cal L}\;
$$
was introduced in \cite{c-d-g}, where it was shown that
$$
P'_C(t_1, \cdots ,t_r)= L_C(t_1, \cdots, t_r)\cdot \pr_{i=1}^r
(t_i-1)
$$
is in fact a polynomial that is divisible by $t_1\cdots t_r -1$.
The Poincar\'e series for the curve $C$ was defined as the
polynomial with integer coefficients
$$ P_C (\seq t1r) = \dfrac{P'_C(\seq t1r)}{t_1\cdots
t_r -1}.
$$

Analogously, for a set of divisorial valuations $V=\{\seq
{\nu}1r\}$, we define
$$ L_V(t_1,
\ldots,t_r)=\sum\limits_{\mm\in\ZZ^r}d(\mm)\cdot\t^\mm\ \in\ {\cal L}. $$ $L_V$ is a
Laurent series, but, since $d(\mm)$ can be positive even if $\mm$ have some negative
component $m_i$, it is not a power series (in fact, as in the case of a curve, it
contains infinitely many terms with negative powers). In Proposition~\ref{prop_series},
we will show that
$$P'_V(t_1, \ldots ,t_r)= L_V(t_1, \ldots, t_r)\cdot \pr_{i=1}^r
(t_i-1) \in \ZZ\left[\left[\seq t1r \right]\right]\; . $$
 Thus, we
define the {\bf Poincar\'e series} of $V$ as the formal power
series with integer coefficients $$ P_V (\seq t1r) = \dfrac
{P'_{V}(\seq t1r)}{t_1\cdots t_r -1}\; . $$

\begin{remark}
It is not clear a priori whether the Laurent series $L_V$ can be computed from the Poincar\'e
series $P_V$, since in ${\cal L}$ there are elements which vanish after multiplication by
$\prod_{i=1}^r (t_i-1)$.  So, it is not obvious how to recover neither  the Hilbert
function of the graded ring $\gr_V R$,  $ d(\mm)$, $\mm\in \ZZ_{\ge 0}^r$, nor the
Hilbert function of the multi-index filtration of the ring
$R$,
$h(\mm) := \dim
R/J(\mm)$, $\mm\in \ZZ_{\ge 0}^r$. This can be done, following \cite{d-g},  as follows:
denote $I=\{1,\ldots,r\}$, and define
$$
\widetilde L_V(\seq t1r) = \sum\limits_{\mm\in\ZZ^r_{\ge 0}}
d(\mm)\cdot\t^{\mm} \in \ZZ[[\seq t1r]]\; ,
$$
and $ \widetilde P'_V(\seq t1r) = \widetilde L_V(\seq t1r)\cdot
\prod_{i=1}^r (t_i-1)$. The formula
$$
\widetilde P'_V(\seq t1r) = \sum_{J\subset I} (-1)^{\# J}\;
P'_V(\seq t1r) \mbox{\raisebox{-0.5ex}{$\vert$}}{}_{ {\{t_i=1
\mbox{ \small{for} } i\in J\}}}\; ,
$$
($\#J$ denoting the cardinality of $J$) allows to determine the series $\widetilde P'_V$
from the series $P'_V$, and as a consequence the power series $\widetilde L_V$. Finally,
$\widetilde L_V$ determines the Laurent series $L_V$, since $d(\mm) = d(\max
(m_1,0),\ldots,\max (m_r,0))$ for $\mm\not\le -\1$ and  $d(\mm)=0$\, for $\mm \le -\1$.

To get $h$, set $H(\seq t1r) := \sum\limits_{\mm\in \ZZ^r} h(\mm)\cdot \t^{\mm}\in {\cal
L}$, where $h(\mm)=\dim R/J(\mm)$ for $\mm\in \ZZ^r$ (notice that $h(\mm)=0$ if $\mm \leq 0$).  The equality $H(\seq t1r) = L_V(\seq t1r) (1 +
\t^{(1, \ldots,1)}+ \t^{(2, \ldots, 2)} + \ldots )$ solves our problem. Note that the
right hand side of the last equality makes sense since $d(\mm)=0$ for $\mm\le
-\underline{1}$.
\end{remark}

\bigskip

The following results involve dimensionality of the homogeneous
components of the graded algebra relative to finite sets
$V=\{\nu_1,\ldots, \nu_r\}$ of divisorial valuations. They will be
useful to relate the Poincar\'e series  of $V$ and the one of any
general curve for $V$. For $i \in I= \{1,2, \ldots, r\}$,  define
\begin{equation}\label{eq0}
\begin{array}{rl}
p_i(\mm) &:= \suma_{J \subset I\setminus\{i\}} (-1)^{\# J} d_i(\mm
+ \ee_J)\quad \mbox{and} \\
P_i(\seq t1r) &:= \suma_{\mm\in \ZZ^r} p_i(\mm)\t^{\mm}\in {\cal
L}\; .
\end{array}
\end{equation}

\begin{proposition}\label{prop_series}
Let $V$ be a finite set of $r$ divisorial valuations, then
$$
P'_V(t_1, \ldots ,t_r)= (t_1 t_2 \cdots t_r -1) P_i(\seq t1r) \in
\ZZ\left[\left[\seq t1r \right]\right]\; .
$$
As a consequence $P_V(\seq t1r) = P_i(\seq t1r)$ does not depend
on the index $i$ chosen. Moreover, if we write $P_V(\seq t1r) =
\suma_{\mm\in \ZZ^r} p(\mm)\t^{\mm}$, then
 $\mm\in S_V$ whenever $p(\mm) \neq 0$ and so $P_V(\seq t1r)\in \ZZ[[\seq t1r]]$.
\end{proposition}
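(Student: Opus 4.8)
The plan is to split the statement into a purely formal identity and a geometric support statement. The formal part is the equality $P'_V=(t_1\cdots t_r-1)P_i$, which I will get by expressing both sides as a Laurent polynomial times the series $H=\suma_{\mm\in\ZZ^r}h(\mm)\t^\mm$ of the preceding remark, $h(\mm)=\dim R/J(\mm)$; every manipulation will be a multiplication of a Laurent series by a Laurent polynomial, which is legitimate in the module ${\cal L}$, so I must be careful never to multiply two genuine Laurent series. The geometric part is the assertion on the support of the coefficients, which I will extract from an inclusion--exclusion reading of $p_i(\mm)$ together with the fact that a vector space over the infinite field $K$ is not a finite union of proper subspaces.

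For the formal part, the inclusions $J(\mm+\ee)\subseteq J(\mm+\ee_i)\subseteq J(\mm)$ give $d(\mm)=h(\mm+\ee)-h(\mm)$ and $d_i(\mm)=h(\mm+\ee_i)-h(\mm)$, hence $L_V=\bigl((t_1\cdots t_r)^{-1}-1\bigr)H$ and $L^{(i)}_V:=\suma_{\mm\in\ZZ^r}d_i(\mm)\t^\mm=(t_i^{-1}-1)H$. Re-indexing the definition of $P_i$ by the substitution $\mm\mapsto\mm+\ee_J$ yields $P_i=\pr_{j\neq i}(1-t_j^{-1})\,L^{(i)}_V$. Thus $P'_V=\pr_{k=1}^r(t_k-1)\bigl((t_1\cdots t_r)^{-1}-1\bigr)H$ and $(t_1\cdots t_r-1)P_i=(t_1\cdots t_r-1)\pr_{j\neq i}(1-t_j^{-1})(t_i^{-1}-1)H$, so the first assertion follows once one checks the Laurent--polynomial identity
$$
\pr_{k=1}^r(t_k-1)\bigl((t_1\cdots t_r)^{-1}-1\bigr)\;=\;(t_1\cdots t_r-1)\,\pr_{j\neq i}(1-t_j^{-1})\,(t_i^{-1}-1),
$$
both sides being equal to $(1-t_1\cdots t_r)\pr_{k=1}^r(1-t_k^{-1})$; multiplying it by $H$ gives $P'_V=(t_1\cdots t_r-1)P_i$.

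For the support statement I will first compute $p_i(\mm)$ in closed form. Substituting $d_i(\nn)=h(\nn+\ee_i)-h(\nn)$ into the definition and sorting the terms according to the subset $K=J$ or $K=J\cup\{i\}$ of $I$, I obtain $p_i(\mm)=-\suma_{K\subseteq I}(-1)^{\#K}h(\mm+\ee_K)$, which is visibly independent of $i$; hence so is $P_i$. Writing $h(\mm+\ee_K)=h(\mm+\ee)-\dim U_K$ with $U_K:=J(\mm+\ee_K)/J(\mm+\ee)$ a subspace of $D(\mm)=J(\mm)/J(\mm+\ee)$, and using $\suma_{K\subseteq I}(-1)^{\#K}=0$, this becomes $p_i(\mm)=\suma_{K\subseteq I}(-1)^{\#K}\dim U_K$. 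Since $J(\mm+\ee_K)=\bigcap_{i\in K}J(\mm+\ee_i)$ for $K\neq\emptyset$, we have $U_K=\bigcap_{i\in K}U_i$ inside $D(\mm)$, with the convention $U_\emptyset=D(\mm)$. Now assume $\mm\notin S_V$: then no $g\in R$ satisfies $\nuv(g)=\mm$, which says exactly that every class of $D(\mm)$ lies in some $U_i$, i.e. $D(\mm)=\bigcup_{i\in I}U_i$; as $K$ is infinite this forces $U_{i_0}=D(\mm)$ for some $i_0$. Pairing each subset $K\subseteq I$ with $i_0\notin K$ to $K\cup\{i_0\}$, the equality $\bigcap_{i\in K\cup\{i_0\}}U_i=\bigcap_{i\in K}U_i$ makes the two corresponding terms of $p_i(\mm)$ cancel, so $p_i(\mm)=0$. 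Therefore $P_i=\suma_{\mm\in S_V}p_i(\mm)\t^\mm\in\ZZ[[\seq t1r]]$, and consequently $P'_V=(t_1\cdots t_r-1)P_i\in\ZZ[[\seq t1r]]$.

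Finally, $1-t_1\cdots t_r$ is a unit of $\ZZ[[\seq t1r]]$, so multiplication by $t_1\cdots t_r-1$ is injective there and $P_V=P'_V/(t_1\cdots t_r-1)$ is a well-defined power series; comparing $(t_1\cdots t_r-1)P_i=P'_V=(t_1\cdots t_r-1)P_V$ gives $P_V=P_i$, so the coefficients of $P_V$ are the $p_i(\mm)$ and vanish unless $\mm\in S_V$. The step I expect to be the real obstacle is the support statement, specifically recognizing $p_i(\mm)$ as the alternating sum $\suma_K(-1)^{\#K}\dim\bigcap_{i\in K}U_i$ and translating ``$\mm\notin S_V$'' into ``the $U_i$ cover $D(\mm)$''; once this dictionary is set up, the infinite--field argument collapses the alternating sum and finishes the proof.
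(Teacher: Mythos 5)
Your proof is correct, and it follows a genuinely different route from the paper's. For the formal identity $P'_V=(t_1\cdots t_r-1)P_i$, the paper computes the coefficient $\ell(\mm)$ of $P'_V$ directly, using the telescoping decomposition $d(\nn)=\suma_{j}d_{i_j}(\nn+\ee_{i_1}+\cdots+\ee_{i_{j-1}})$ with two different arrangements of $I$ to get $\ell(\mm)=p_1(\mm-\ee)-p_1(\mm)$; you instead route everything through the auxiliary series $H=\suma h(\mm)\t^{\mm}$, reducing the identity to a single Laurent--polynomial identity multiplied against $H$. Both are valid; your computation buys you the fact that $p_i(\mm)=-\suma_{K\subseteq I}(-1)^{\#K}h(\mm+\ee_K)$, so the independence of $P_i$ from $i$ becomes a manifest formal identity rather than an afterthought extracted (as in the paper) from the divisibility of $P'_V$ in $\ZZ[[\seq t1r]]$. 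For the support statement, the paper argues tersely that if all $d_i(\mm)\neq 0$ then a generic linear combination of witnesses realizes $\mm\in S_V$, whence some $d_{i_0}(\mm)=0$ and all terms of $p_{i_0}(\mm)$ vanish by the inclusion $D_{i_0}(\mm+\ee_J)\subset D_{i_0}(\mm)$; your version reformulates the same geometry as ``the $U_i$ cover $D(\mm)$, so some $U_{i_0}=D(\mm)$ over the infinite field $K$,'' and then cancels terms in pairs. This is equivalent but makes the role of the infinite residue field explicit and avoids the (slightly informal) ``take the minimum of the values'' step. A minor remark: the usability of $H$ rests on the finiteness of all $h(\mm)$, which holds because $J(\mm)=J(\max(\mm,0))$ is $\ideal{m}$-primary when $\mm\not\le 0$ and equals $R$ otherwise; this is implicit in your write-up but worth stating.
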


\begin{proof}
We shall show the first statement for $i=1$ for the sake of
simplicity. Write  $P'_V(\seq t1r) = \suma_{\mm\in \ZZ^r}
\ell(\mm)\t^\mm \in {\cal L}$. Then
$$
\begin{array}{rl}
\ell(\mm) & = \suma_{J\subset I} (-1)^{\# J} d(\mm-\ee+\ee_J) \\
& = \suma_{J\subset I\setminus\{1\}} (-1)^{\# J} \left( d(\mm-\ee+\ee_J)-
d(\mm-\ee+\ee_J+\ee_1) \right)\; .
\end{array}
$$

On the other hand, if $\nn\in \ZZ^r$, then, for any arrangement
$(\seq i1r)$ of the elements in the set $I$, $ D(\nn)\simeq
\bigoplus_{j=1}^{r} D_{i_j}(\nn+ \ee_{i_1}+\cdots+\ee_{i_{j-1}})\;
$. So, $d(\nn) = \suma_{j=1}^r
d_{i_j}(\nn+\ee_{i_1}+\cdots+\ee_{i_{j-1}})$.

Applying the above decomposition for $\nn = \mm-\ee+\ee_J$ with
the natural arrangement $(1,2,\ldots, r)$ and for $\nn =
\mm-\ee+\ee_J+\ee_1$ with the arrangement $(2,3,\ldots, r,1)$ we
get:
$$
\begin{array}{rl}
\ell(\mm) & = \suma_{J\subset I\setminus\{1\}} (-1)^{\# J} \left(
d(\mm-\ee+\ee_J)- d(\mm-\ee+\ee_J+\ee_1) \right) \\
&= \suma_{J\subset I\setminus\{1\}}
(-1)^{\# J} \left( d_1(\mm-\ee+\ee_J)- d_1(\mm+\ee_J) \right) \\
& = p_1(\mm-\ee) - p_1(\mm)\; ,
\end{array}
$$
and thus, we obtain the formula for $P'_V$ given in the statement.
\smallskip

Now, let $\mm\in \ZZ^r$ be such that $\mm\notin S_V$. Then there
exists an index $i\in I$ such that $d_i(\mm)=0$, since otherwise
$$\mm = \min \{\nuv(g_i) | \init_{\nu_i}(g_i)\in D_i(\mm)\setminus
\{0\}, 1 \leq i \leq r\}$$ is in $S_V$. $D_i(\mm+\ee_J)\subset
D_i(\mm)$ for any $J\subset I$ with $i\notin J$, thus
$d_i(\mm+\ee_J)=0$ and so $p_i(\mm) = 0$, which ends the proof.
\end{proof}

We will say that the divisorial valuation $\nu_j\in V$ is {\bf
extremal} if $\alpha(j)$
is a dead end of ${\cal G}(\pi)$, $\pi$ being the minimal resolution of $V$.

\begin{lemma}\label{lemma_ter}
Take $\mm \in S_V$ and let $\nu_j\in V$ be an extremal valuation.
Then, for every $J\subset I$ with $j\notin J$ the equality
$d_j(\mm+B^j+\ee_J) = d_j(\mm+\ee_J)+ 1$ holds. As  a consequence,
$p(\mm) = p(\mm+B^j)$.
\end{lemma}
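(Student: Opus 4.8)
The plan is to prove the first assertion $d_j(\mm+B^j+\ee_J)=d_j(\mm+\ee_J)+1$ (for every $J\subseteq I\setminus\{j\}$) and then read off the equality $p(\mm)=p(\mm+B^j)$. The latter deduction is purely formal: since $P_V=P_j$ by Proposition~\ref{prop_series}, one has $p(\nn)=\sum_{J\subseteq I\setminus\{j\}}(-1)^{\#J}d_j(\nn+\ee_J)$ for all $\nn$, so
$$
p(\mm+B^j)=\sum_{J\subseteq I\setminus\{j\}}(-1)^{\#J}\big(d_j(\mm+\ee_J)+1\big)=p(\mm)+\sum_{J\subseteq I\setminus\{j\}}(-1)^{\#J},
$$
and the last sum is $(1-1)^{r-1}=0$ because $r>1$.

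For the first assertion I would fix $J$, put $\nn=\mm+\ee_J$ (so $n_j=m_j$ and $\nn+B^j=\mm+B^j+\ee_J$), and split according to whether $d_j(\nn)$ vanishes. If $d_j(\nn)\neq0$, the required equality is precisely part~2 of Lemma~\ref{lem6} applied to the index $j$ and the element $\nn$, so nothing more is needed and, in particular, extremality plays no role here. If $d_j(\nn)=0$, the assertion becomes $d_j(\nn+B^j)=1$, and the inequality $d_j(\nn+B^j)\le1$ follows at once from Lemma~\ref{lem5}: if $d_j(\nn+B^j)\ge2$ then $d_j(\nn)=d_j((\nn+B^j)-B^j)\ge1$, a contradiction.

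The one point that really requires work --- and the step I expect to be the main obstacle --- is the reverse inequality $d_j(\nn+B^j)\ge1$ in this degenerate case, and this is where both hypotheses enter. Because $\mm\in S_V$, I would choose $h\in R\setminus\{0\}$ with $\nuv(h)=\mm$. Because $\nu_j$ is extremal, $\alpha(j)$ is a dead end of $\mathcal G(\pi)$, hence $\Delta:=\mathcal G(\pi)\setminus\{\alpha(j)\}$ is connected and, crucially, contains $\alpha(i)$ for every $i\neq j$; let $q_\Delta$ be the monomial given by Proposition~\ref{mainl} for the pair $(\alpha(j),\Delta)$. Then $\nu_j(q_\Delta)=\nu_j(Q_{\alpha(j)})=B^j_j$ while $\nu_i(q_\Delta)>\nu_i(Q_{\alpha(j)})=B^j_i$, i.e. $\nu_i(q_\Delta)\ge B^j_i+1$, for all $i\neq j$. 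Therefore $g:=h\,q_\Delta$ has $\nu_j(g)=m_j+B^j_j$ and $\nu_i(g)\ge m_i+B^j_i+1$ for $i\neq j$, so $g\in J(\mm+B^j+\ee_J)\setminus J(\mm+B^j+\ee_J+\ee_j)$ and its class is a nonzero element of $D_j(\mm+B^j+\ee_J)=D_j(\nn+B^j)$; thus $d_j(\nn+B^j)\ge1$, which combined with the upper bound finishes the case. Note that extremality is used exactly to guarantee a single component $\Delta$ that simultaneously collects all the $\alpha(i)$ with $i\neq j$ --- without it no single monomial would beat $Q_{\alpha(j)}$ at all the remaining coordinates at once --- so the whole difficulty is concentrated in recognising that the vanishing case $d_j(\mm+\ee_J)=0$ escapes Lemma~\ref{lem6} and must be closed by this geometric construction.
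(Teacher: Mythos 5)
Your proposal is correct and follows essentially the same route as the paper: both split on whether $d_j(\mm+\ee_J)$ vanishes, invoking Lemma~\ref{lem6} in the nonzero case, Lemma~\ref{lem5} for the upper bound in the zero case, and using the extremality of $\nu_j$ via Proposition~\ref{mainl} (with $\Delta=\mathcal G(\pi)\setminus\{\alpha(j)\}$) to manufacture the monomial $q$ so that $hq$ witnesses $d_j(\mm+B^j+\ee_J)\ge1$; the final passage to $p(\mm)=p(\mm+B^j)$ is the same inclusion-exclusion computation as in the paper.
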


\begin{Proof}
Set $I'=I\setminus \{j\}$ and $B^j= (\seq {B^j}1r)$. Since $\nu_j$ is extremal, by
Proposition~\ref{mainl} there exists a monomial $q$ such that $\nu_j(q) =  B^j_j$ and
$\nu_i(q) > B^j_i$ for $i\in I'$. Let $h\in R$ be such that $\nuv(h) = \mm$; then $\nuv(h
q)\ge \mm+\ee_{I'}+B^j$ but $\nuv(h q)\not \ge \mm+\ee+B^j$. In particular, for any
$J\subset I'$, $\nuv(h q)\ge \mm+\ee_{J}+B^j$ but $\nuv(h q)\not \ge
\mm+\ee_{J}+\ee_j+B^j$ and so, $d_j(\mm+B^j+\ee_J) \neq 0$.

Consider again $J\subset I$ such that  $j\notin J$. If
$d_j(\mm+\ee_J) \neq 0$ then, by Lemma~\ref{lem6},
$d_j(\mm+B^j+\ee_J)=d_j(\mm+\ee_J)+1$. If, otherwise,
$d_j(\mm+\ee_J)=0$,  then $d_j(\mm+\ee_J+B^j) =1$, since
$d_j(\mm+\ee_J+B^j)  \ge  2$ implies $d_j(\mm+\ee_J)  \ge  1$
(Lemma \ref{lem5}).

Finally, the fact that $p(\nn) = p_j(\nn)$ for any $\nn\in\mathbb Z^r$ and the following
equalities chain conclude the proof (recall that $r>1$)
$$
\begin{array}{rl}
p_j(\mm+B^j) & = \suma_{j\notin J\subset I} (-1)^{\# J}
d_j(\mm+B^j+\ee_J)
\\
& = \suma_{j\notin J\subset I} (-1)^{\# J} \left( d_j(\mm+\ee_J)
+1\right)
\\
& = p_j(\mm) + \suma_{j\notin J\subset I}(-1)^{\# J} = p(\mm). \; \; \Box
\end{array}
$$
\end{Proof}

Let $C=C_f$ be a general curve of $V$ and consider the sequence of families of valuations
$\{V^{(k)}\}$ constructed for $C$ in Section \ref{gradedalg}, where $\pi^{(0)}$ is the
minimal resolution of $V$. Next proposition allows to write the Poincar\'e series,
$P_{V^{(k)}}(\seq t1r)$, as a quotient of two series in such a way that the numerator
does not depend on $k$. Stand $B_{(k)}^i$ for the value $B^i$ associated to the family
$V^{(k)}$.
\begin{proposition}\label{lem8} For every $k\ge 0$,
$$
P_{V^{(k)}}(\seq t1r)\cdot \pr_{i=1}^r (1-\t^{B_{(k)}^i}) =
P_{V}(\seq t1r) \cdot \pr_{i=1}^r (1-\t^{B^i})\; .
$$
\end{proposition}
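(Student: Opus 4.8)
The proof is by induction on $k$; the case $k=0$ holds by definition, since $V^{(0)}=V$ and $B^i_{(0)}=B^i$. So it suffices to show that $N^{(k)}:=P_{V^{(k)}}(\seq t1r)\cdot\prod_{i=1}^r(1-\t^{B^i_{(k)}})$ is unchanged when $k$ is replaced by $k+1$. First I record a useful fact: because $\pi^{(k)}$ is already an embedded resolution of the general curve $C=C_f$, one may take $Q_{\alpha^{(k)}(i)}=f_i$ for all $i$ and all $k$. Tracking the effect on values of the single blowing-up at $\w C_i\cap E^{(k)}$ that produces $\nu^{(k+1)}_i$ — the strict transform of $C_{f_i}$ passes through that point with multiplicity $1$, while the strict transform of $C_{f_j}$ with $j\ne i$ does not pass through it, since the branches of $C$ have disjoint strict transforms on $X^{(k)}$ — then gives $B^i_{(k+1)}=B^i_{(k)}+\ee_i$, hence $B^i_{(k)}=B^i+k\,\ee_i$.

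I would then pass from $V^{(k)}$ to $V^{(k+1)}$ one valuation at a time, through the intermediate families $W_0=V^{(k)},W_1,\dots,W_r=V^{(k+1)}$, where $W_\ell$ replaces $\nu^{(k)}_j$ by $\nu^{(k+1)}_j$ for $j\le\ell$. The blowing-up producing $\nu^{(k+1)}_j$ takes place at a smooth point of $E^{(k)}$ lying on no exceptional component other than $E_{\alpha^{(k)}(j)}$ and on no strict transform $\w C_m$ with $m\ne j$; hence it alters neither the divisors defining the remaining valuations of $W_{\ell-1}$ nor their values on the $Q$'s, so $W_{\ell-1}$ and $W_\ell$ differ only in the $\ell$-th slot, with $B^i_{W_\ell}=B^i_{W_{\ell-1}}$ for $i\ne\ell$ and $B^\ell_{W_\ell}=B^\ell_{W_{\ell-1}}+\ee_\ell$. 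Everything thus reduces to one elementary step: a family $V'$ is obtained from $V$ by replacing a single valuation $\nu_j$ by the valuation $\nu'_j$ got from $\nu_j$ by one extra blowing-up at the smooth point $\w C_{Q_{\alpha(j)}}\cap E_{\alpha(j)}$, and one must prove
$$
P_{V'}\cdot(1-\t^{B^j+\ee_j})\cdot\pr_{i\ne j}(1-\t^{B^i})=P_{V}\cdot(1-\t^{B^j})\cdot\pr_{i\ne j}(1-\t^{B^i})\;.
$$

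For this step the key point is that $E_{\alpha(\nu'_j)}$ is a dead end of the minimal resolution of $V'$, so $\nu'_j$ is extremal and Lemma~\ref{lemma_ter} applies to it; combined with Lemmas~\ref{lem5} and~\ref{lem6} and Theorem~\ref{str_semi}, this lets me write $P_{V'}\cdot(1-\t^{B^j+\ee_j})$ as the finitely supported series $\sum_{\nn}p'(\nn)\,\t^{\nn}$ over the $\nn\in S_{V'}$ that are minimal in the $B^j+\ee_j$ direction (i.e.\ $d'_j(\nn)=1$), and likewise $P_V\cdot(1-\t^{B^j})=\sum_{\nn}p(\nn)\,\t^{\nn}$ over the $\nn\in S_V$ with $d_j(\nn)=1$ — the latter requiring, when $\nu_j$ is not itself extremal, only the reorganisation afforded by Lemmas~\ref{lem5} and~\ref{lem6}. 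One then checks that these two base series coincide: the extra blowing-up changes only $\nu_j$, via $\nu'_j(g)=\nu_j(g)+m_g$ where $m_g$ is the multiplicity of $\w C_g$ at the blown-up point, and a direct analysis of the relevant pencils in the spirit of Section~\ref{prelim} (separating the cases $m_g=0$ and $m_g>0$) identifies the spaces $J(\mm)/J(\mm+\ee_i)$, and the alternating sums of their dimensions, on the two sides after the coordinate shift $B^j\mapsto B^j+\ee_j$. Multiplying the resulting equality back by $\pr_{i\ne j}(1-\t^{B^i})$ gives the step, and hence the proposition.

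The main obstacle is precisely this last comparison — matching the graded pieces $J(\mm)/J(\mm+\ee_i)$ and the Poincar\'e coefficients across the single extra blowing-up, particularly in the non-extremal case, where Lemma~\ref{lemma_ter} is unavailable and the bookkeeping of the alternating sums must be carried out by hand. The reduction to a single blowing-up and the passage from each Poincar\'e series to its ``base'' part are, by contrast, formal consequences of the structure of $S_V$ established in Section~\ref{value}.
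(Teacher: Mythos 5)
Your high-level reduction --- pass from $V^{(k)}$ to $V^{(k+1)}$ one valuation at a time, so that everything follows from the single-blow-up identity $(1-\t^{\w B^j})P_{\w V}=(1-\t^{B^j})P_V$, and then exploit that the new valuation $\w\nu_j$ is extremal so Lemma~\ref{lemma_ter} applies --- is exactly the structure of the paper's proof. However, you stop short of the step that carries the real weight, and the way you set it up introduces an unjustified claim.

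You assert that $P_V\cdot(1-\t^{B^j})$ also reduces to a ``base'' series supported on $\{\nn\in S_V : d_j(\nn)=1\}$, ``requiring only the reorganisation afforded by Lemmas~\ref{lem5} and~\ref{lem6}'' when $\nu_j$ is not extremal. This would require $p(\mm)=p(\mm-B^j)$ whenever $\mm-B^j\in S_V$, which is exactly the content of Lemma~\ref{lemma_ter}; its proof hinges on Proposition~\ref{mainl} applied to the \emph{single} connected component of $\mathcal G(\pi)\setminus\{\alpha(j)\}$, and extremality of $\nu_j$ is essential for that. For non-extremal $\nu_j$ the monomial needed in that proof need not exist, and Lemmas~\ref{lem5}--\ref{lem6} alone only give you $d_j(\mm+\ee_J)-d_j(\mm-B^j+\ee_J)\in\{0,1\}$ termwise, from which the alternating sum over $J$ is not controlled. (Incidentally, the support $\{\nn\in S_{V'} : d'_j(\nn)=1\}$ is infinite, not finite; finiteness only appears after multiplying by all $r$ factors $(1-\t^{B^i})$, as in the proof of Theorem~\ref{poincare1}.)

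More importantly, the comparison you describe as ``one then checks that these two base series coincide'' is deferred to ``a direct analysis of the relevant pencils in the spirit of Section~\ref{prelim}'' and never carried out --- you yourself flag this as the main obstacle. This is where all the real content of the proposition lies. The paper does not attempt a support reduction of $P_V\cdot(1-\t^{B^j})$ at all; it proves the termwise identity $\w p(\mm)-\w p(\mm-\w B^1)=p(\mm)-p(\mm-B^1)$ directly, by reducing to the dimension equality $\w d_1(\mm+\ee_J)-\w d_1(\mm-\w B^1+\ee_J)=d_1(\mm+\ee_J)-d_1(\mm-B^1+\ee_J)$ for each $J\subset I\setminus\{1\}$ and each $\mm\in S_V$. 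The case analysis is elementary except for one configuration which must be ruled out: $d_1(\mm+\ee_J)=0$ while $\w d_1(\mm+\ee_J)\ne 0$. There, a representative $h$ must meet the new component $E_{\w\alpha(1)}$; writing $h=\varphi h'$ with $\varphi$ collecting the branches through $E_{\w\alpha(1)}$ and applying Proposition~\ref{mainl} to $\Delta=\mathcal G(\w\pi)\setminus\{\w\alpha(1)\}$ produces a monomial $q$ with $\w\nu_1(q)=\w\nu_1(\varphi)$ and $\nu_i(q)>\nu_i(\varphi)$ for $i\ge 2$; then $h'q$ witnesses $d_1(\mm+\ee_{I\setminus\{1\}})\ne 0$, a contradiction. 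This monomial construction --- not a formal consequence of Section~\ref{value} --- is the missing piece, and without it the proposal is a plan rather than a proof.
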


\begin{proof}
It suffices to show the formula for $k=1$. Let $E_{\w \alpha(1)}$ be
the exceptional divisor created by blowing-up at a smooth point
$P\in E_{\alpha(1)}$. Set $\w \nu_1$  the $E_{\w{\alpha}(1)}$
valuation and consider the set of
divisorial valuations $\w V = \{\w \nu_1, \nu_2,\ldots, \nu_r\}$.
Stand $P_{\w V}(\seq t1r)$ for the Poincar\'e series of $\w V$  and
set $\w B^1 = \left(\w \nu_1(Q_{\w \alpha(1)}), \nu_2(Q_{\w
\alpha(1)}),\ldots, \nu_r(Q_{\w \alpha(1)}) \right)\in S_{\w V}$. If
we prove that
$$
(1-\t^{\w B^1}) P_{\w V}(\seq t1r) = (1-\t^{B^1}) P_V(\seq t1r),
$$
then the result, for $k=1$, follows after iterating the same
procedure for the remaining valuations $\nu_i$.

Let us write $P_{\w V}(\seq t1r) = \suma_{\mm\in \ZZ^r}
\w{p}(\mm)\t^{\mm}$. We only need to prove, for any $\mm\in
\ZZ^r$, the following equality:
\begin{equation}\label{eq1}
\w{p}(\mm)- \w{p}(\mm-\w{B}^1) = p(\mm)- p(\mm-B^1).
\end{equation}
Indeed, if $\mm\notin S_{V}$ (respectively, $\mm\notin S_{\w V}$)
then the right (respectively, the left) hand side of
equality~(\ref{eq1}) vanishes, since both  involved terms are equal
to zero. Moreover, if $\mm\in S_{\w V}\setminus S_V$ then by
Theorem~\ref{str_semi}, $\mm  = \nn + s \w B^1$ for some $\nn\in S_{V}$ and $s\ge 1$ (since otherwise $\mm\in S_V$). In particular, $\mm- \w B^1\in S_{\w V}$ and,
since $\w \nu_1$ is extremal, Lemma~\ref{lemma_ter} implies
$\w{p}(\mm) = \w{p}(\mm-\w{B}^1)$. Therefore the left hand side of
equality~(\ref{eq1}) is also equal to zero. So, from now on we
assume that $\mm\in S_{V}$.

Denote by $\w J(\mm)$ the valuation ideal of $\mm$ for $\w V$. Set
$\w d_1(\mm) = \dim \w J(\mm)/$ $\w J(\mm+\ee_1)$. Taking into
account the formulae in (\ref{eq0}), to prove (\ref{eq1}) we only
need to show the following equality for any $J\subset
I\setminus\{1\}$:
\begin{equation}\label{eq2}
\w d_1(\mm+\ee_J) - \w d_1(\mm-\w B^1+\ee_J) = d_1(\mm+\ee_J) -
d_1(\mm-B^1+\ee_J)\; .
\end{equation}

Let us assume that either $d_1(\mm + \ee_J)\neq 0$ or $\w{d}_1(\mm
+ \ee_J)=0$. Since $S_V\subset S_{\w{V}}$, we have $d_1(\nn)=0$ if
$\w{d}_1(\nn)= 0$, for any $\nn\in \mathbb Z^r$. Therefore, if
$\w{d}_1(\mm-B^1+e_J)=0$, then $d_1(\mm-B^1+e_J)=0$, and by Lemma
\ref{lem5}, $\w{d}_1(\mm+e_J)\in\{0,1\}$ and
$d_1(\mm+e_J)\in\{0,1\}$. Since our assumption excludes the case
$\w{d}_1(\mm+e_J)=1, \; d_1(\mm+e_J)=0$, the equality (\ref{eq2})
holds. Otherwise, $\w{d}_1(\mm-B^1+e_J)\neq 0$, by Lemma
\ref{lem6} the left hand side of the equality (\ref{eq2}) is equal
to $1$. Again by our assumption, we cannot have $d_1(\mm+e_J)=0$
and applying Lemma \ref{lem5} when $d_1(\mm-B^1+e_J)=0$ and Lemma
\ref{lem6} otherwise we prove that the right hand side  also
equals $1$.

To finish the proof, we will prove that there is no  $J\subset
I\setminus\{1\}$ such that $d_1(\mm+\ee_J)=0$ and $\w
d_1(\mm+\ee_J) \neq 0$. If $d_1(\mm+\ee_J)=0$ and $\w
d_1(\mm+\ee_J) \neq 0$,  pick $h\in R$ such that its image in
$\w{D}_1(\mm+\ee_J)$ does not vanish. Let  $\w \pi$ be the minimal
resolution of $\w V$. Clearly, $\mathcal G(\w{\pi}) \setminus \{\w
\alpha(1)\}$ is connected. Since $d_1(\mm+\ee_J)=0$, we have $\w
\nu_1(h) \neq \nu_1(h)$, and as a consequence the strict transform
of $C_h$ by $\w \pi$ intersects $E_{\w \alpha(1)}$. Let $h =
\varphi h'$ be such that the strict transform of $C_{h'}$ by $\w
\pi$ does not intersect $E_{\w \alpha(1)}$ and the strict
transforms by $\w \pi$ of all the irreducible components of
$C_\varphi$ intersect $E_{\w \alpha(1)}$.

Applying Proposition~\ref{mainl} to the connected component $\Delta = \mathcal
G(\w{\pi})\setminus\{\w\alpha(1)\}$ one can show that
there exists a monomial $q$ in the elements $\{Q_\rho\mid\rho\in
{\cal E}\cap \Delta\}$, such
that $\w \nu_1(q) = \w \nu_1(\varphi)$ and $\nu_i (q) > \nu_i(\varphi)$ for
$i=2,\ldots,r$. As the irreducible components of the strict transforms of $C_q$ do not
meet the divisor $E_{\w \alpha(1)}$, one has $\nu_1(q) = \w \nu_1(q)$ and so $\nu_1(h' q)
=\w \nu_1(h' q) = \w \nu_1(h)=m_1$ and $\nu_i(h' q)
> \nu_i (h)\ge m_i$. As a consequence, $h'q \in D_1(\mm +
\ee_{I \setminus \{1\}})\setminus\{0\}$, and then $d_1(\mm+\ee_J) \neq 0$, which is a
contradiction.
\end{proof}

Now, we state the relationship between the Poincar\'e series of a finite set of
divisorial valuations $V$ and the Poincar\'e polynomial of a general curve, $C$, of $V$.

\begin{theorem}\label{poincare1}
Let $V$ and $C$ be as above. Then,
$$
P_V(\seq t1r) = \dfrac{P_C(\seq t1r)}{\pr_{i=1}^r (1- \t^{B^i})}\;
.
$$
\end{theorem}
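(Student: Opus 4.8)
The plan is to combine the approximation result of Proposition~\ref{lem8} with a stabilization argument: the families $V^{(k)}$ of divisorial valuations approach the curve $C$, so $P_{V^{(k)}}$ should converge, coefficient by coefficient, to the Poincar\'e polynomial $P_C$, while the left-hand factors $\prod_i(1-\t^{B^i_{(k)}})$ tend to $1$ in a suitable formal sense. Since Proposition~\ref{lem8} tells us that the product $P_{V^{(k)}}\cdot\prod_i(1-\t^{B^i_{(k)}})$ is independent of $k$ (and equals $P_V\cdot\prod_i(1-\t^{B^i})$ when $\pi^{(0)}$ is taken to be the minimal resolution of $V$), it will suffice to identify this common value with $P_C$.

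First I would make precise the convergence $P_{V^{(k)}}\to P_C$. Recall that for $h\in R$ not divisible by any $f_i$ one has $\nu^{(k)}_i(h)=v_i(h)$ for $k\gg 0$, hence $d^{(k)}(\mm)\to c(\mm)$ for each fixed $\mm$, in fact $d^{(k)}(\mm)=c(\mm)$ once $k$ is large enough relative to $\|\mm\|$. This gives $L_{V^{(k)}}\to L_C$ coefficientwise, and therefore $P'_{V^{(k)}}\to P'_C$ and $P_{V^{(k)}}\to P_C$ coefficientwise; one has to check that the division by $t_1\cdots t_r-1$ is compatible with this limit, which is routine since $P'_{V^{(k)}}$ and $P'_C$ are genuine power series divisible by $t_1\cdots t_r-1$ and the quotient's coefficients are determined by finitely many coefficients of the numerator in any bounded region. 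Next I would observe that $B^i_{(k)}\to\infty$ componentwise as $k\to\infty$ (each additional blow-up on the branch $C_i$ strictly increases $\nu^{(k)}_i(f_i)$ and the other coordinates as well, by the structure of the infinitely near points), so $\t^{B^i_{(k)}}$ has arbitrarily high order and $\prod_{i=1}^r(1-\t^{B^i_{(k)}})\to 1$ in the $(t_1,\ldots,t_r)$-adic topology on $\ZZ[[\seq t1r]]$.

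Putting these together: by Proposition~\ref{lem8}, for every $k$,
$$
P_{V^{(k)}}(\seq t1r)\cdot\pr_{i=1}^r(1-\t^{B^i_{(k)}}) = P_V(\seq t1r)\cdot\pr_{i=1}^r(1-\t^{B^i})\; ,
$$
the right-hand side being the $k=0$ value with $\pi^{(0)}$ the minimal resolution of $V$. Taking the coefficientwise (equivalently, $(t_1,\ldots,t_r)$-adic) limit as $k\to\infty$ on the left, the first factor tends to $P_C$ and the second to $1$, whence $P_V\cdot\prod_i(1-\t^{B^i})=P_C$, which is the asserted identity after dividing by $\prod_i(1-\t^{B^i})$ (legitimate in $\ZZ[[\seq t1r]]$ since each $1-\t^{B^i}$ is a unit, $B^i\neq 0$).

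The main obstacle is justifying that the limit interacts correctly with the division by $t_1\cdots t_r-1$ and with the infinite product, i.e.\ that the bookkeeping in $\mathcal L$ versus $\ZZ[[\seq t1r]]$ is consistent. Concretely, one must confirm that $P_{V^{(k)}}\in\ZZ[[\seq t1r]]$ (which follows from Proposition~\ref{prop_series}, since its nonzero coefficients are supported on $S_{V^{(k)}}\subset\ZZ^r_{\ge 0}$), that the stabilization $d^{(k)}(\mm)=c(\mm)$ for $k\gg0$ is uniform on bounded regions (this needs a short argument bounding, for a given $\mm$, how many blow-ups suffice so that no branch $C_i$ can contribute to $J^{C}(\mm)$ versus $J(\mm)$ discrepancies), and that the finitely-supported-in-bounded-regions nature of all series involved makes the limit legitimate. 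Once these formal points are nailed down, the proof is immediate from Proposition~\ref{lem8}.
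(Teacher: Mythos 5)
Your argument is correct and takes a genuinely different route from the paper's. The paper also invokes Proposition~\ref{lem8} to reduce to the case where every $\nu_i$ is extremal, but then carries out an explicit coefficient computation: using Lemma~\ref{lemma_ter} and Theorem~\ref{str_semi} it shows that $P_{\w V}\cdot\prod_{i}(1-\t^{\w B^i})$ is supported on the set $A=\{\mm\in S_{\w V}\,:\, \w d_i(\mm)=1\ \forall i\}$ with coefficient $\w p(\mm)$ there, expresses the coefficients $\bar p(\mm)$ of $P_C$ by an analogous alternating sum of the $c_1$'s, and matches the two on the finitely many relevant exponents for $k$ large. You instead pass to the formal limit directly in the identity of Proposition~\ref{lem8}, which amounts to proving Corollary~\ref{cor} independently and then deducing the theorem from it — the reverse of the paper's logical order, in which Corollary~\ref{cor} is a consequence of Theorem~\ref{poincare1}. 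Your approach is cleaner: it bypasses the combinatorics with $A$, $J_{\mm}$ and Lemma~\ref{lemma_ter} entirely, at the price of making rigorous the stabilization $d^{(k)}(\mm)=c(\mm)$ for $k\gg 0$, which you correctly flag as the crux; this does hold, by Noetherianity applied to the ascending chains $J^{(k)}(\mm)$, which increase to and stabilize at the preimage in $R$ of $J^C(\mm)$, because each $\nu_i^{(k)}(h)$ increases monotonically with $k$ to $v_i(h)$ (with $v_i(h)=\infty$ if $f_i\mid h$). One small slip: $B^i_{(k)}$ does \emph{not} tend to infinity componentwise — only its $i$-th coordinate $\nu_i^{(k)}(f_i)$ does, while the $j$-th coordinate stabilizes at $v_j(f_i)$ for $j\neq i$ — but this already gives $\prod_i(1-\t^{B^i_{(k)}})\to 1$ in the $(\seq t1r)$-adic topology, since each total degree $|B^i_{(k)}|\to\infty$.
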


\begin{proof}
By Proposition~\ref{lem8}, it suffices to prove the result for the set of valuations
$V^{(k)}$,  for some $k$. In particular, we can assume that all the divisorial valuations
$\seq{\nu}1r$ are extremal. Fix some $k$ and, for simplicity, write $\w{V}=V^{(k)}$,
$P_{\w{V}}(\seq t1r) = \suma_{\mm\in \ZZ_{\ge 0}^r} \w{p}(\mm)\t^\mm$, and set $\w{d_i}$
for the corresponding dimensions, and recall that $\w{p}(\mm) = 0$ when $\mm\notin
S_{\w{V}}$.

The coefficient of $\t^{\mm}$ in the series $P_{\w{V}}(\seq t1r) \cdot \pr_{i=1}^r
(1-\t^{\w{B^i}})$ is
$$
\lambda_{\mm}=\suma_{J\subset I}(-1)^{\#J}\w{p}(\mm-\suma_{i\in J} \w{B^i}).
$$

Now, if $J_{\mm}=\{i\in I\mid \mm-\w{B^i}\in S_{\w{V}}\}$, we have $\mm-\suma_{i\in
J}\w{B^i}\in S_{\w{V}}$ if and only if $J\subset J_{\mm}$ (see Theorem \ref{str_semi}),
and in this case, by Lemma \ref{lemma_ter}, $\w{p}(\mm-\suma_{i\in
J}\w{B^i})=\w{p}(\mm)$. Therefore,
$$
\lambda_{\mm}=\suma_{J\subset J_{\mm}}(-1)^{\#J}\w{p}(\mm-\suma_{i\in J}
\w{B^i})=\suma_{J\subset J_{\mm}}(-1)^{\#J}\w{p}(\mm),
$$
which is $0$ if $J_\mm\neq \emptyset$ and $\w{p}(\mm)$ if $J_{\mm}=\emptyset$, that is,
if $\w{d_i}(\mm)=1$ for $1 \leq i \leq r$ (Theorem \ref{str_semi}). Hence,
$$
P_{\w{V}}(\seq t1r) \cdot \pr_{i=1}^r (1-\t^{\w{B^i}})= \suma_{m\in A}
\w{p}(\mm)\t^{\mm},
$$
where
$$
A := \{ \mm\in S_{\w{V}} \mid \w{d_i}(\mm)=1 \text{ for } 1 \leq i
\leq r \}\; .
$$

For $J\subset I\setminus\{1\}$ we have $D_1(\mm+e_J)\subset D_1(\mm)$, hence
$\w{d_1}(\mm+e_J)\le 1$ for any $\mm\in A$. Thus, all the summands in the formula
$\w{p}(\mm) = \w{p}_1(\mm)=\suma_{J\subset I\setminus\{1\}}(-1)^{\# J}
\w{d_1}(\mm+\ee_J)$ are $1$ or $0$.

\smallskip

On the other hand, if $P_C(\seq t1r)=\suma_{\mm}  \bar{p}(\mm)\t^\mm$ is the Poincar\'e
polynomial of the curve $C$, it is straightforward to deduce that the coefficients
$\bar{p}(\mm)$  satisfy a formula similar to (\ref{eq0}), in particular the following
equality holds
$$
\bar{p}(\mm) = \bar{p}_1(\mm) = \suma_{J\subset I\setminus\{1\}}(-1)^{\# J}
c_1(\mm+\ee_J)\;
$$
where $c_1(\nn) = \dim J^C(\nn)/J^C(\nn+e_1)$ for any
$\nn\in\ZZ^r$. And in this case, the dimensions $c_1(\nn)$ only
 could be $1$ or $0$, because $ J^C(\nn)/J^C(\nn+e_1)$ can be regarded as a vector subspace of  $J^{C_1}(\nn)/J^{C_1}(\nn+e_1)$ ($C_1$ being one of the branches of $C$), whose dimension is $1$ or $0$.

We claim that there exists some $k$ such that for any $\mm\in A$
and $J\subset I\setminus\{1\}$ it happens that
$\w{d_1}(\mm+\ee_J)=0$ if and only if $c_1(\mm+\ee_J)=0$, and such
that $\bar{p}(\mm)=0$ for any $\mm\notin  A$. Then, we deduce
$P_{\w{V}}(\seq t1r) = \dfrac{P_C(\seq t1r)}{\pr_{i=1}^r (1-
\t^{B^i})}$, as we wanted to prove (recall that $\w{V}$ depends on
$k$).

Since $S_{\w{V}}\subset S_C$, then $\w{d_1}(\nn)\neq 0$  implies
$c_1(\nn)\neq 0$ for any $\nn\in \ZZ^r$. Moreover, for $\mm$
fixed, there exists $k_0$ such that, for any $J\subset
I\setminus\{1\}$ and $k\ge k_0$, we have $d^{(k)}_1(\mm+\ee_J)
\neq 0$ if $c_1(\mm+\ee_J) \neq 0$. But $P_C$ is a polynomial, so
$B:=\{\mm\in S_C\mid \bar{p}(\mm)\neq 0\}$ is a finite set and for
$k$ large enough we find $\w{d_1}(\mm+\ee_J)=0$ if and only if
$c_1(\mm+\ee_J)=0$, for any $J\subset I\setminus\{1\}$ and $\mm\in
B$; if moreover we pick $k$ such that $\mm\not\geq B^i_{(k)}$ for
every $\mm\in B$, we have $B\subset A$ (see Theorem
\ref{str_semi}), that is, $\bar{p}(\mm)=0$ for any $\mm\notin  A$,
which proves our claim.
\end{proof}

\medskip

Next corollary gives a precise meaning to the fact that the
valuations defined by  a curve singularity can be approached by
families of divisorial valuations:

\begin{corollary}\label{cor} Let $V$ and $C$ as above and
$V^{(k)}$ ($k\ge 0$)  the finite sets of divisorial valuations
defined in Section \ref{gradedalg}.  Then,
$$
\lim_{k\to \infty} P_{V^{(k)}}(\seq t1r) = P_C(\seq t1r).\; \; \Box
$$
\end{corollary}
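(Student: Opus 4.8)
The plan is to derive Corollary~\ref{cor} directly from Theorem~\ref{poincare1} together with Proposition~\ref{lem8}, turning the statement into an elementary convergence assertion for formal power series. First I would fix the notion of convergence being used: a sequence of formal power series in $\ZZ[[\seq t1r]]$ converges to a limit if, for every $\mm\in\ZZ_{\ge 0}^r$, the coefficient of $\t^{\mm}$ is eventually constant and equal to the coefficient in the limit. Concretely, writing $P_{V^{(k)}}(\seq t1r)=\suma_{\mm} p^{(k)}(\mm)\t^{\mm}$ and $P_C(\seq t1r)=\suma_{\mm}\bar p(\mm)\t^{\mm}$, the claim is that for each $\mm$ there is $k_0=k_0(\mm)$ with $p^{(k)}(\mm)=\bar p(\mm)$ for all $k\ge k_0$.

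The key point is that $B^i_{(k)}$ grows without bound as $k\to\infty$: since $\pi^{(k)}$ is obtained from $\pi^{(k-1)}$ by blowing up the point where the strict transform of $C_i$ meets the exceptional divisor, the value $\betab_{g+1}$-type coordinate of $B^i_{(k)}$ strictly increases with $k$, so in particular every coordinate of $B^i_{(k)}$ tends to infinity. Now apply Theorem~\ref{poincare1} to $V^{(k)}$ (legitimate because, as observed in Section~\ref{gradedalg}, a general curve of $V^{(k)}$ is again $C$, or more precisely $C$ serves as the general curve throughout the tower): we get
$$
P_{V^{(k)}}(\seq t1r)=\frac{P_C(\seq t1r)}{\pr_{i=1}^r (1-\t^{B^i_{(k)}})}
= P_C(\seq t1r)\cdot\pr_{i=1}^r\left(\suma_{n\ge 0}\t^{n B^i_{(k)}}\right)\; .
$$
Because $P_C$ is a \emph{polynomial}, its support $B=\{\mm\mid \bar p(\mm)\neq 0\}$ is finite. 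For a fixed $\mm$, the coefficient of $\t^{\mm}$ in the product above is a finite sum $\suma \bar p(\nn)$ over those $\nn\in B$ for which $\mm-\nn$ lies in the monoid generated by $B^1_{(k)},\ldots,B^r_{(k)}$; and once $k$ is large enough that every nonzero coordinate of every $B^i_{(k)}$ exceeds $\max\{m_j : 1\le j\le r\}$, the only way $\mm-\nn$ can be a nonnegative combination of the $B^i_{(k)}$ is $\mm=\nn$ (with the trivial combination). Hence for such $k$ the coefficient of $\t^{\mm}$ in $P_{V^{(k)}}$ equals exactly $\bar p(\mm)$, which is the desired coefficientwise convergence.

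I do not expect a serious obstacle here; the statement is essentially a bookkeeping consequence of the earlier theorems. The one thing that needs a word of care is making precise that Theorem~\ref{poincare1} applies to each $V^{(k)}$ with the \emph{same} curve $C$ playing the role of the general curve — this is exactly the setup arranged in Section~\ref{gradedalg}, where $Q_{\alpha^{(k)}(i)}=f_i$ for all $k$, so $C$ is a general curve of $V^{(k)}$ for every $k$. Once that is noted, the proof is the two displayed computations above plus the observation that the coordinates of $B^i_{(k)}$ diverge. I would present it compactly: invoke Theorem~\ref{poincare1}, expand the geometric series, use finiteness of the support of the polynomial $P_C$, and conclude coefficientwise stabilization. $\Box$
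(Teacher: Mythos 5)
Your overall approach is the same as the paper's (the corollary is stated with a terminal $\Box$ and is meant to follow immediately from Theorem~\ref{poincare1} applied to each $V^{(k)}$, plus the observation that the $B^i_{(k)}$ ``go to infinity'' so that $\pr_{i=1}^r(1-\t^{B^i_{(k)}})\to 1$ coefficientwise). However, your justification of the stabilization step contains a genuine error: you assert that ``every coordinate of $B^i_{(k)}$ tends to infinity'', and then use the sufficient condition ``every nonzero coordinate of every $B^i_{(k)}$ exceeds $\max\{m_j\}$''. This is false. Writing $B^i_{(k)}=\nuv^{(k)}(f_i)$, the $j$-th coordinate for $j\neq i$ is $\nu^{(k)}_j(f_i)$, which stabilizes to the finite intersection number $(f_j,f_i)=v_j(f_i)$ as $k\to\infty$; only the $i$-th coordinate $\nu^{(k)}_i(f_i)$ diverges. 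Consequently, for a general $\mm$ your condition is never met, no matter how large $k$ is.

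The argument is easily repaired and the conclusion is correct, but the reasoning must be adjusted: it suffices that for each $i$ the $i$-th coordinate $(B^i_{(k)})_i=\nu^{(k)}_i(f_i)$ exceeds $m_i - n_i$ for all $\nn$ in the (finite) support $B$ of $P_C$. Indeed, if $\mm-\nn=\sum_i n_i B^i_{(k)}$ with some $n_{i_0}\ge 1$, then the $i_0$-th coordinate of $\mm-\nn$ is at least $n_{i_0}(B^{i_0}_{(k)})_{i_0}\ge (B^{i_0}_{(k)})_{i_0}$ (all the other summands being nonnegative), which is impossible once $(B^{i_0}_{(k)})_{i_0}$ is large. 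So only the diagonal entries need to diverge, and they do. With that correction your proof is complete and is essentially the paper's intended (unwritten) argument.
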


Finally,  assume that $R = {\cal O}_{\C^2,0}$ is the local ring of
germs of holomorphic functions at the origin of the complex plane
$\C^2$. For a vertex $\alpha$ of the dual graph ${\cal G}$ of a
set of valuations $V$ as above, denote by
${\stackrel{\bullet}{E}}_\alpha= E_\alpha \setminus
(\k{E-E_\alpha})$ the smooth part of an irreducible component
$E_{\alpha}$ in the exceptional divisor, $E$, of the minimal
resolution of $V$ and by $\chi({\stackrel{\bullet}{E}}_\alpha)$
its Euler characteristic. In addition, set $\nuv^\alpha =
\nuv(Q_{\alpha})$. Then  the following formula of A'Campo's type
\cite{acam}, firstly proved in \cite{d-g},  holds.

\begin{corollary}\label{poincare2}
$$
P_V(\seq t1r) = \pr\limits_{E_\alpha\subset{E}}
\left(1-\t^{{\nuv}^\alpha}\right)^{-\chi({\stackrel{\bullet}{E}}_\alpha)}\;
.
$$
\end{corollary}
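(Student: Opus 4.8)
The plan is to reduce everything to the factorization already established in Theorem \ref{poincare1}, namely $P_V = P_C \cdot \prod_{i=1}^r (1-\t^{B^i})^{-1}$, and then to recognize the right-hand side of the A'Campo formula as a product of two blocks: one coming from the branches of the general curve $C$ and one coming from the extra exceptional components that are blown up but carry no branch. First I would recall that, by \cite{c-d-g}, the Poincar\'e polynomial $P_C$ of the general curve $C$ of $V$ coincides with the Alexander polynomial of the link of the singularity, and that by the Eisenbud--Neumann extension \cite{EN} of A'Campo's formula \cite{acam} this Alexander polynomial has the product expression
$$
P_C(\seq t1r) = \pr_{E_\alpha\subset E_C} \left(1-\t^{\vv^\alpha}\right)^{-\chi(\stackrel{\bullet}{E}_\alpha)},
$$
where $E_C$ is the exceptional divisor of the \emph{minimal embedded resolution} of $C$, the smooth part $\stackrel{\bullet}{E}_\alpha$ is taken inside the total transform of $C$ (so the arrows of the branches count as punctures), and $\vv^\alpha = \vv(Q_\alpha)$. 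Here I would be careful about the value vectors: on components of $E_C$ one has $\nuv^\alpha = \vv^\alpha$ because the valuations $\nu_i$ of $V$ and the curve valuations $v_i$ agree on the elements $Q_\alpha$ that are resolved by $\pi$, by Equality (\ref{eq00}).

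The key combinatorial step is to compare the dual graph $\mathcal G(\pi)$ of the minimal resolution of $V$ with the dual graph of the minimal embedded resolution of $C$. Since $C$ is a general curve of $V$, these two resolutions are closely related: $\mathcal G(\pi)$ is obtained from the minimal embedded resolution graph of $C$ by possibly performing finitely many further blow-ups along the strict transforms of the branches in order to realize each $E_{\alpha(i)}$, and conversely. I would organize the bookkeeping so that, for each branch $C_i$, the ``tail'' of vertices between the last genuine node of the embedded resolution and $\alpha(i)$ contributes exactly the Euler-characteristic factors that, after telescoping, combine into the single factor $(1-\t^{B^i})^{-1}$. Concretely: along such a linear chain each interior vertex $\beta$ has $\chi(\stackrel{\bullet}{E}_\beta)=0$ in the \emph{curve} picture (it is a $\PP^1$ with two exceptional points removed, and no branch through it), so it contributes nothing to $P_C$; but in the \emph{valuation} picture of $V$, the component $E_{\alpha(i)}$ is an end and the branch arrow is removed, which changes the exponents precisely on the vertices where the arrow used to sit. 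Tracking how $\chi(\stackrel{\bullet}{E}_\alpha)$ changes when one passes from the curve $C$ (arrows attached at $\alpha(i)$) to the set $V$ (no arrows, but $E_{\alpha(i)}$ possibly a new end after extra blow-ups) is exactly what turns the identity $P_V = P_C\cdot\prod(1-\t^{B^i})^{-1}$ into the asserted A'Campo product over all $E_\alpha\subset E$.

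I would carry this out in the order: (1) write down A'Campo/Eisenbud--Neumann for $P_C$ over the embedded resolution graph of $C$; (2) pass from that graph to $\mathcal G(\pi)$ by the finitely many extra blow-ups, checking that a new end vertex created over a smooth point of $E_{\alpha(i)}$ together with the chain leading to it contributes, in the valuation normalization, the factor $(1-\t^{\nuv^{\alpha(i)}})^{-1}(1-\t^{\vv^{\text{old}}})^{+1}$ telescoping correctly, and using $\nuv^{\alpha(i)} = B^i$; (3) observe that every other vertex of $\mathcal G(\pi)$ already appears in the curve graph with the \emph{same} value vector $\nuv^\alpha=\vv^\alpha$ and the \emph{same} Euler characteristic of its smooth part (since no branch passes through it, removing the arrows does not affect $\chi(\stackrel{\bullet}{E}_\alpha)$ there); (4) multiply everything together and invoke Theorem \ref{poincare1}. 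The main obstacle, and the place where care is required, is step (2)--(3): one must verify that the discrepancy between the two graphs is \emph{entirely} absorbed by the $\prod_{i=1}^r(1-\t^{B^i})$ factor, i.e. that no spurious factors appear from star vertices or from components whose smooth-part Euler characteristic differs between the curve and the valuation picture; this is where the explicit description of the end divisors $\mathcal E$ and the behaviour of $\betab_{g+1}=\nu_i(Q_{\alpha(i)})$ relative to the maximal contact values, recalled in Section \ref{prelim}, is used to pin down which vertices change type. Once that local comparison is settled, the global formula follows formally. $\Box$
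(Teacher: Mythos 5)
Your proof is correct and uses exactly the same three ingredients as the paper: Theorem~\ref{poincare1}, the identification $P_C=\Delta^C$ from \cite{c-d-g}, and the Eisenbud--Neumann product formula. The difference is one of presentation, and it makes your version noticeably heavier than it needs to be. You write the EN formula over the \emph{minimal embedded resolution} $E_C$ of $C$ and then reconcile that graph with $\mathcal G(\pi)$ by telescoping along the chains created by the extra blow-ups. The paper instead exploits the well-known invariance of the EN product under further point blow-ups and writes $P_C$ directly as a product over all $E_\alpha\subset E$ in $\mathcal G(\pi)$, with exponents $-\chi({\stackrel{\circ}{E}}_\alpha)$ computed in the total transform of $C$ (arrows included). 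Once both products live over the \emph{same} graph, the comparison is immediate: $\chi({\stackrel{\circ}{E}}_\alpha)=2-b(\alpha)=\chi({\stackrel{\bullet}{E}}_\alpha)$ at every vertex without an arrow, and $\chi({\stackrel{\circ}{E}}_{\alpha(i)})=\chi({\stackrel{\bullet}{E}}_{\alpha(i)})-1$ at the $r$ arrow vertices. Dividing by $\prod_{i=1}^r(1-\t^{B^i})$ exactly compensates for those $r$ shifted exponents, and Theorem~\ref{poincare1} finishes the proof. No further combinatorics on chains, dead ends, or maximal contact values is needed.

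One small warning about your step (2): the telescoping factor you write, $(1-\t^{\nuv^{\alpha(i)}})^{-1}(1-\t^{\vv^{\text{old}}})^{+1}$, is not quite what happens. When a blow-up along the strict transform of $C_i$ moves the arrow from $\beta$ to the new vertex, the component $E_\beta$ simultaneously \emph{gains} a neighbour and \emph{loses} the arrow-puncture, so $\chi({\stackrel{\circ}{E}}_\beta)$ is unchanged and there is no factor $(1-\t^{\vv^{\text{old}}})^{+1}$ to cancel. All the interior vertices of the new chain have $\chi({\stackrel{\circ}{E}})=0$ in the curve picture, and the only net new contribution in the \emph{valuation} picture is the single factor $(1-\t^{B^i})^{-1}$ at the new end $\alpha(i)$. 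So your answer is right, but the bookkeeping as stated would not quite balance; using the invariance of the EN formula to put everything on $\mathcal G(\pi)$ from the start, as the paper does, avoids the issue entirely.
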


\begin{Proof}
$E_{\alpha}$ is isomorphic to the complex line $\PP^1_\C$, so
$\chi({\stackrel{\bullet}{E}}_\alpha)= 2-b(\alpha)$, where
$b(\alpha)$ denotes the number of singular points of $E_{\alpha}$ in
$E$ (i.e., the number of connected components of ${\cal G} \setminus
\{\alpha\}$).

Since the Poincar\'e
polynomial $P_C(\seq t1r)$ coincides with the Alexander polynomial
$\Delta^{C}(\seq t1r)$ (see \cite{c-d-g}) and by using the
Eisenbud-Neumann
 formula
for $\Delta^C(\seq t1r)$ \cite{EN}, we obtain:
$$
P_C(\seq t1r) = \Delta^C(\seq t1r) =
\prod\limits_{E_\alpha\subset{E}}
\left(1-\t^{{\vv}^\alpha}\right)^{-\chi({\stackrel{\circ}{E}}_\alpha)}\;
,
$$
where $\vv^\alpha = \vv(Q_{\alpha})$, $\vv$ being the above
described valuation  sequence given by $C$,  and
${\stackrel{\circ}{E}}_\alpha$ is the smooth part of $E_\alpha$ in
the total transform of $C$ by the minimal resolution of $V$.
$\chi({\stackrel{\circ}{E}}_\alpha)= 2-b(\alpha)=
\chi({\stackrel{\bullet}{E}}_\alpha)$ for those $\alpha\notin
\{\alpha(1), \ldots, \alpha(r)\}$ and
$\chi({\stackrel{\circ}{E}}_{\alpha(i)})= 2-(b(\alpha(i))+1)=
\chi({\stackrel{\bullet}{E}}_{\alpha(i)})-1$ for $i=1,\ldots,r$.

Finally,  $\vv^{\alpha} = \nuv^{\alpha}$ for $\alpha\in {\cal G}$
and, so,
$$
P_V(\seq t1r) = \frac{\prod\limits_{E_\alpha\subset{E}}
\left(1-\t^{{\vv}^\alpha}\right)^{-\chi({\stackrel{\circ}{E}}_\alpha)}}
{\pr_{i=1}^r (1-\t^{B^i})} = \prod\limits_{E_\alpha\subset{E}}
\left(1-\t^{{\vv}^\alpha}\right)^{-\chi({\stackrel{\bullet}{E}}_\alpha)}.
\; \;\Box
$$
\end{Proof}

\begin{remark}
The proof of the equality between the Poincar\'e and the Alexander polynomials in \cite{c-d-g} uses the topology of the complex field. However, the authors have informed us about the existence of a non-published alternative proof
which avoids this, by using deeper properties of the semigroup $S_C$. Thus,  writing $2-b(\alpha)$ instead $\chi({\stackrel{\bullet}{E}}_\alpha)$, the above formula holds also in the non-complex case.
\end{remark}

\bigskip
 We conclude this paper giving an illustrative
example.

\medskip

 \noindent
{\bf Example.} Let $x,y$ be independent variables and set $T= \C[x,y]_{(x,y)}$. Consider
the set $V= \{\nu_1, \nu_2, \nu_3 \}$ of divisorial valuations of $\C (x,y)$ centered at
$T$, whose minimal resolution is given by the sequence of ideals:
    \begin{itemize}
    \item $\nu_1$: $\ideal m_0=\langle x,y \rangle, \ideal m_1=\langle x,\dfrac{y}{x}\rangle, \ideal m_2=\langle
    x,\dfrac{y}{x^2}-1\rangle$;
    \item $\nu_2$: $\ideal m_0, \ideal m_1, \ideal m_3=\langle
    \dfrac{y}{x},\dfrac{x^2}{y}\rangle, \ideal m_4=\langle \dfrac{y}{x},\dfrac{x^3-y^2}{y^2}\rangle,
    \ideal m_5=\langle \dfrac{x^3-y^2}{y^2},
    \dfrac{y^3}{x(x^3-y^2)}\rangle$,\\
    $\ideal m_6=\langle \dfrac{x^3-y^2}{y^2}, \dfrac{y^5}{x(x^3-y^2)^2}
    -1\rangle$;
    \item $\nu_3$: $\ideal m_0, \ideal m_1, \ideal m_3, \ideal m_4, \ideal m_5$.
    \end{itemize}

\begin{figure}[h]
\unitlength=1.00mm
    \begin{picture}(60.00,21.00)(-20,10)
    \thicklines \put(-5,30){\line(1,0){90}} \put(-5,30){\circle*{1}}
    \put(25,30){\circle*{1}} \put(55,30){\circle*{1}}
    \put(85,30){\circle{2}} \put(85,30){\circle*{1}}
    \put(55,30){\circle{2}} \put(-6,25){{\bf 1}}
    \put(22,32){$4$} \put(47,33){$\alpha(\nu_3) = 6$}
    \put(80,33){$\alpha(\nu_2) = 7$} \put(55,15){\line(0,1){15}}
    \thicklines \put(25,15){\line(0,1){15}} \put(25,15){\circle*{1}}
    \put(55,15){\circle*{1}} \put(40,15){\circle*{1}}
    \put(40,15){\circle{2}} \put(22,15){$2$}
    \put(31,10){$\alpha(\nu_1) = 3$} \put(57,13){$5$}
    \put(25,15){\line(1,0){15}}
    \end{picture}
\caption{Dual graph} \label{fig2}
\end{figure}
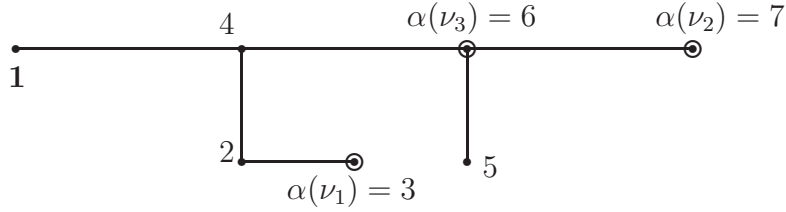

The dual graph of $V$ has the shape of Figure \ref{fig2}. From it, we can deduce the
values $\nuv^{\bf 1}=(1,4,4)$, $\nuv^2=(2,6,6)$, $\nuv^3=(3,6,6)$, $\nuv^4=(3,12,12)$,
$\nuv^5=(3,13,13)$, $\nuv^6=(6,26,26)$, $\nuv^7=(6,27,26)$, as well as the values
$\chi({\stackrel{\bullet}{E}}_\alpha)=2-b(\alpha)$ (Corollary \ref{poincare2}), giving
the following expression for the Poincar\'e series
$$
P_V= \frac{(1-t_1^{3}t_2^{12}t_3^{12})(1-t_1^{6}t_2^{26}t_3^{26})}
{(1-t_1t_2^{4}t_3^{4})(1-t_1^3t_2^{6}t_3^{6})
(1-t_1^{3}t_2^{13}t_3^{13})(1-t_1^{6}t_2^{27}t_3^{26})}\;.
$$

Moreover, by Theorem \ref{maint}, the set $ \Lambda = \{ Q_{\bf
1}=x,Q_{3}= y-x^2, Q_5 = y^2 -x^3, Q_7 = (y^2 -x^3)^2 - x^5 y \}$
is a minimal generating sequence of $V$ since the set $\{ {\bf 1},
3, 5, 7\}$ is the set of dead ends of the displayed dual graph.

\vspace{3mm}
\par

\begin{thebibliography}{99}
\footnotesize \setlength{\baselineskip}{3mm}
\bibitem{acam} N. A'Campo, La fonction z\^{e}ta d'une monodromie, {\em Comment. Math. Helv.} {\bf 50}
(1975), 233-248. \vspace{-1.5mm}
\bibitem{Cam-1} A. Campillo, {\it Algebroid curves in positive
characteristic.} Lect. Notes in Math. 813, Springer--Verlag, 1980.
\vspace{-1.5mm}
\bibitem{extended} A. Campillo, F. Delgado, S.M. Gusein-Zade,
The extended semigroup of a  plane curve singularity, {\em Proc.
Steklov Inst.  Math.} {\bf 221} (1998), 139-156. \vspace{-1.5mm}
\bibitem{lon} A. Campillo, F. Delgado, S.M. Gusein-Zade, On generators of the
semigroup of a plane curve singularity, {\it J. London Math. Soc.}
{\bf 60 (2)} (1999), 420-430. \vspace{-1.5mm}
\bibitem{g-d-c} A. Campillo, F. Delgado, S.M. Gusein-Zade,  On
the monodromy of a plane curve singularity and the Poincar\'{e}
series of its ring of functions, {\em Funct. Anal. Appl.} {\bf 33
(1)} (1999), 56-57. \vspace{-1.5mm}
\bibitem{c-d-g} A. Campillo, F. Delgado, S.M. Gusein-Zade, The Alexander
polynomial of a plane curve singularity via the ring of functions
on it, {\em Duke Math. J.} {\bf 117} (2003), 125-156.
\vspace{-1.5mm}
\bibitem{c-g} A. Campillo, C. Galindo, The Poincar\'e series associated with
finitely many monomial valuations, {\it Math. Proc. Cambridge Phil.
Soc.} {\bf 134 (3)} (2003), 433-443. \vspace{-1.5mm}
\bibitem{man} F. Delgado, The semigroup of values of a curve
singularity with several branches, {\it Manuscripta Math.} {\bf
59} (1987), 347-374. \vspace{-1.5mm}
\bibitem{lms} F. Delgado, {\it An arithmetical factorization for the
critical point set of some maps from $\C^2$ to $\C^2$.} In:
Singularities--Lille 1991. Jean-Paul Brasselet Ed. London Math.
Soc. Lecture Note Series, 201. Cambridge University Press,
61--100, 1994. \vspace{-1.5mm}
\bibitem{zeit} F. Delgado, C. Galindo, A. Nu\~{n}ez, Saturation for valuations
on two-dimensional regular local rings, {\it Math. Z.} {\bf 234}
(2000), 519- 550. \vspace{-1.5mm}
\bibitem{d-g} F. Delgado, S. M. Gusein-Zade,
Poincar\'e series for several divisorial valuations,  {\it Proc.
Edinb. Math. Soc.}  {\bf 46 (2)} (2003), 501-509.
\vspace{-1.5mm}
\bibitem{pencils} F. Delgado, H. Maugendre, Special fibres and
critical locus for a pencil of plane curve singularities.  {\it
Compositio Math.} {\bf 136 (1)} (2003), 69-87.
 \vspace{-1.5mm}
\bibitem{EN} D. Eisenbud and  W. Neumann,  Three-dimensional link
theory and invariants of plane curve singularities. Ann. of Math.
Studies 110, Princeton Univ. Press, Princeton, NJ, 1985.
\vspace{-1.5mm}
\bibitem{g-2} C. Galindo, On  the  Poincar\'{e}  series
for  a  plane divisorial valuation, {\em Bull. Belg. Math. Soc.}
{\bf 2} (1995), 65-74. \vspace{-1.5mm}
\bibitem{gre-kiy} S. Greco and K. Kiyek, {\it General elements of
complete ideals and valuations centered at two dimensional regular
local rings}, Algebra, Arithmethics and Geometry with Applications,
Springer, 381-455, 2003.
\bibitem{laz} R.
Lazarsfeld, {\it Positivity in Algebraic Geometry, Vol. II.}
Springer, 2004. \vspace{-1.5mm}
\bibitem{lip-wat} J. Lipman and K. Watanabe, Integrally closed ideals in two-dimensional
regular local rings are multiplier ideals, {\em Math. Res. Lett.}
{\bf 10} (2003), 1-12. \vspace{-1.5mm}
\bibitem{s-2} M. Spivakovsky, Valuations in function fields of surfaces, {\em
Amer. J. Math.} {\bf 112} (1990), 107-156. \vspace{-1.5mm}
\bibitem{t} B. Teissier, Valuations, deformations, and toric geometry,
 Valuation theory and its applications, Vol. II
(Saskatoon, SK, 1999), {\it Fields Inst. Commun.} {\bf 33} (2003),
361-459. Amer. Math. Soc., Providence, RI. \vspace{-1.5mm}
\bibitem{tei} O. Zariski, {\em Le probl\`eme des modules pour les branches planes,
with an appendix by B. Teissier}. Hermann, Paris, 1986.
\end{thebibliography}
\end{document}